\newtheorem{theo}{Theorem}[section]
\newtheorem{coro}[theo]{Corollary}
\newtheorem{lemm}[theo]{Lemma}
\newtheorem{prop}[theo]{Proposition}
\theoremstyle{definition}
\newtheorem{defi}[theo]{Definition}
\newtheorem{rema}[theo]{Remark}
\DeclareMathOperator{\run}{run}
\newcommand{\pp}{^+}
\newcommand{\mm}{^-}
\def\Sym{{\bf Sym}}
\DeclareMathOperator{\pk}{pk}
\DeclareMathOperator{\des}{des}
\DeclareMathOperator{\Des}{Des}
\DeclareMathOperator{\Vect}{Vect}
\author{
Matthieu Josuat-Vergès\thanks{Supported by the Laboratoire International Franco-Québecois de Recherche en Combinatoire (LIRCO), and by ANR CARMA (ANR-12-BS01-0017).} \\
\small CNRS and Institut Gaspard Monge, Université Paris-Est Marne-la-Vallée\\
\small 5 Boulevard Descartes, Champs-sur-Marne, 77454 Marne-la-Vallée cedex 2, FRANCE \\
\small\tt matthieu.josuat-verges@u-pem.fr \\
\and
C.Y. Amy Pang\thanks{Supported by a CRM-ISM postdoctoral fellowship, and an NSERC individual research grant of Fran\c cois Bergeron.} \\
\small Laboratoire de Combinatoire et d’Informatique Mathématique, Université du Québec à Montréal \\  
\small CP 8888, Succ. Centre-ville, Montréal, Québec, H3C 3P8, CANADA \\
\small\tt amypang@lacim.ca \\
\small Department of Mathematics, Hong Kong Baptist University\\ 
\small Kowloon Tong, Kowloon, HONG KONG \\
\small\tt amypang@hkbu.edu.hk
}
\title{Subalgebras of Solomon's descent algebra based on alternating runs}
\begin{document}

\maketitle

\begin{abstract}
The number of alternating runs is a natural permutation statistic. We show it can 
be used to define some commutative subalgebras of the symmetric group algebra, and more 
precisely of the descent algebra. 
The Eulerian peak algebras naturally appear as subalgebras of our run algebras. 
We also calculate the orthogonal idempotents for run algebras in terms of noncommutative
symmetric functions.
\end{abstract}

\section{Introduction}

An {\it alternating run} of a permutation $\sigma\in\mathfrak{S}_n$ is a maximal monotone sequence 
of consecutive elements in $\sigma_1, \dots, \sigma_n $.
For example, the alternating runs of $\sigma=14523687$ are $145$, $52$, $2368$, and $87$.
We denote by $\run(\sigma)$ the number of alternating runs of $\sigma$.
The enumeration of permutations refined by the number of alternating runs have 
been the subject of various previous works, beginning with André \cite{andre}
as early as the late 19th century, then Carlitz in the 70's \cite{carlitz,carlitz1,carlitz2,carlitz3}.
See also \cite{bonaehrenborg,canfieldwilf,chowma,fewster,ma1,ma2,zhuang} for more recent references.
It is worth mentioning that the work \cite{fewster} was motivated by computations in quantum field
theory. Also, alternating runs play a role in some algorithms such as pattern matching \cite{bruner}. 

 However, we take here a different perspective, our goal being to define and study various algebras based on this notion of alternating runs. Such an algebra first arose in Doyle and Rockmore's study of ``ruffle" card-shuffling: \cite[Section~5.5]{doyle} shows that the elements
\[
  W_k =  \sum_{ \substack{  \sigma \in \mathfrak{S}_n \\ \run(\sigma)=k  }   } \sigma,
\]
for $1\leq k \leq n-1$, linearly span a commutative subalgebra of $\mathbb{Z}[\mathfrak{S}_n]$, 
called the {\it reduced turning algebra}.
Doyle and Rockmore also considered the number of alternating runs 
when we add an initial $0$ in front of the permutation and obtain in this way another algebra, 
called the {\it turning algebra} \cite[Section~5.4]{doyle}. We will in particular obtain new proofs 
of these results in the present article.

In fact, it is easily seen that $\run(\sigma)$ only depends on the descent set of $\sigma$,
so that the elements $W_k$ lie in the descent algebra $\mathcal{D}_n \subseteq \mathbb{Z}[\mathfrak{S}_n]$
(see \cite{solomon} or the next section for a definition).
This widely studied algebra provides other examples of combinatorial statistics such 
that there is an algebra linearly spanned by sums of permutations having the same value.
Consider for example the descent statistic, then the $n$ elements 
\[
  E_k = \sum_{ \substack{  \sigma \in \mathfrak{S}_n \\ \des(\sigma)=k  }   } \sigma, 
\]
for $0\leq k \leq n-1$ linearly span a subalgebra of $\mathcal{D}_n$ called the {\it Eulerian algebra} $\mathcal{E}_n$
(see~\cite{gelfand,loday}).
But those being particularly relevant here are the Eulerian peak algebra and its left peak analogue 
\cite{aguiarnymanorellana,novellisaliolathibon,petersen,schocker}, because peaks and alternating runs are tightly connected.

Our main goal is to introduce a new algebra based on alternating runs, as the linear span of the $2n-2$ elements:
\begin{align*}
  W_k^+   := \sum_{ \substack{  \sigma \in \mathfrak{S}_n \, \run(\sigma)=k, \\ \text{ the first run is ascending  }  }} \sigma  , \qquad
  W_k^-   := \sum_{ \substack{  \sigma \in \mathfrak{S}_n \, \run(\sigma)=k, \\ \text{ the first run is descending }  }} \sigma,
\end{align*}
for $1\leq k \leq n-1$. This algebra is noncommutative, but it contains several commutative subalgebras, in particular 
the two considered by Doyle and Rockmore and  the two Eulerian peak subalgebras. 

We give in Section~\ref{sec:bij} a bijective proof of its existence. Then we show that once we know the existence 
of this $2n-2$ dimensional algebra, the existence of the various subalgebras easily follows.

Then in Section~\ref{sec:algproof}, we  provide a Hopf algebraic proof of the existence of run algebras, using the formalism 
of noncommutative symmetric functions \cite{gelfand}. Indeed, whereas each homogeneous component has an internal product 
which is essentially that of the descent algebra, the external Hopf structure permits to build new bases. 
Also, this algebraic approach easily gives the commutativity of the subalgebras, a property that seems very difficult 
to get by bijective proofs.

We go on in Section~\ref{sec:idem} by computing primitive orthogonal idempotents for each of the various run algebras, as generating functions in noncommutative
symmetric functions. Part of these calculations appear to be equivalent to Petersen's computation of peak idempotents via order polynomials of enriched $P$-partitions \cite{petersen}. 
We also give some properties of the related characters of the symmetric groups, in the same way as idempotents of the Eulerian subalgebra
are related with Foulkes characters \cite{foulkes}.

\subsection*{Acknowledgements}
We thank Jean-Yves Thibon for suggesting an exploration of algebras based on alternating runs. We thank Kyle Petersen for helpful discussions, and 
for pointing out to us the reference by Doyle and Rockmore. We are also grateful to Fran\c cois Bergeron, Christophe Hohlweg and Franco Saliola for their help and comments. 
Sage computer software \cite{sage} was very useful throughout this project.
We also thank the Laboratoire International Franco-Québécois de Recherche en Combinatoire (LIRCO), who supported the French author in his visits to Montréal.
Eventually, we thank the reviewers for their nice comments and suggestions.

\section{Preliminaries}

\subsection{Notations}

A permutation $\sigma\in\mathfrak{S}_n$ is denoted as the sequence $\sigma_1 , \dots , \sigma_n$ where $\sigma_i=\sigma(i)$.
We denote by $\Des(\sigma)$ the set of {\it descents} of $\sigma$, i.e.~integers $i$ such that $1\leq i \leq n-1$ and $\sigma(i)>\sigma(i+1)$.
Let also $\des(\sigma) = \# \Des(\sigma)$.
Let $[n-1]$ denote $\{1,\dots,n-1\}$. The {\it descent class} $\mathfrak{D}_I$ for $I\subseteq [n-1]$
is the set of $\sigma\in\mathfrak{S}_n$ with $\Des(\sigma)=I$.

A {\it partition} $\lambda$ of $n$ is a decreasing sequence of positive integers
$\lambda_1 \geq \lambda_2 \geq \dots \geq \lambda_k$ called {\it parts}, such that $\sum_{i=1}^k \lambda_i = n$. 
We also denote by $|\lambda|$ the sum of parts of $\lambda$.
The integer $k$ is the {\it length} of $\lambda$, and is denoted $\ell(\lambda)$. 
Also, we denote $\ell_e(\lambda)$ (respectively, $\ell_o(\lambda)$) the even length (respectively, the odd length) of $\lambda$,
i.e.~the number of even parts (respectively, odd parts). Write $z_{\lambda}$ for $1^{i_1}i_1!2^{i_2}i_2!\dots$, where $i_r$ is the number of parts of size $r$ in $\lambda$.
We denote $\lambda\vdash n$ to say that $\lambda$ is a partition of $n$.

A {\it composition} of $n$ is a sequence $I=(i_1,\dots,i_r)$ of positive integers such that $\sum_{j=1}^ r i_j = n$. 
We also denote by $|I|$ the sum of parts of $I$. 
The integer $r$ is called the {\it length} of $I$, and is denoted $\ell(I)$.
We write $I \vDash n$ to say that $I$ is a composition of $n$.

In a partition or a composition, we write $1^n$ to mean that 1 is repeated $n$ times.

The {\it descent set} of a composition $I=(i_1,\dots,i_r)$, denoted $\Des(I)$, is 
\[
 \Bigg\{ i_1, i_1+i_2,\dots, \sum_{j=1}^{r-1} i_j \Bigg\}. 
\]
The map $ I \mapsto \Des(I)$ is a bijection from compositions of $n$ to subsets of $[n-1]$.
When $I,J \vDash n$, we say that $J$ is a refinement of $I$, denoted $I \preccurlyeq J$, 
if $\Des(I)\subseteq \Des(J)$. When we write $I\preccurlyeq J$, we always understand that $|I|=|J|$.



\subsection{The descent algebra}

For $I\vDash n$, consider the following sums in the group algebra $\mathbb{Z}[\mathfrak{S}_n]$:
\[
  R_I = \sum_{\substack{ \sigma\in\mathfrak{S}_n \\  \Des(\sigma) = \Des (I) }} \sigma, \qquad
  S^I = \sum_{\substack{ \sigma\in\mathfrak{S}_n \\  \Des(\sigma) \subseteq \Des(I) }} \sigma.
\]
Note that by an immediate inclusion-exclusion, we have:
\begin{equation*} 
  S^I = \sum_{J \preccurlyeq I} R_J,  \qquad R_I = \sum_{J \preccurlyeq I} (-1)^{\ell(J)-\ell(I)} S^J.
\end{equation*}

The linear span of the elements $R_I$ (or equivalently, $S^I$) for $I\vDash n$ is denoted $\mathcal{D}_n$
and called the {\it descent algebra}. It was indeed shown by Solomon \cite{solomon} that $\mathcal{D}_n$ is a subalgebra
of the group algebra $\mathbb{Z}[\mathfrak{S}_n]$. However, to facilitate our construction of idempotents of various subalgebras, it will be more convenient to work with rational coefficients, i.e.~to consider  $\mathcal{D}_n$ as a subalgebra of $\mathbb{Q}[\mathfrak{S}_n]$.

\subsection{The Eulerian peak algebras.}
\label{peakseulerian}

Two previously identified subalgebras of the descent algebra will play a key role in the analysis of our new subalgebras. These are based on peaks:

\begin{defi}
 A {\it peak} of a permutation $\sigma\in\mathfrak{S}_n$ is an integer $i$ with $2\leq i \leq n-1$ and
 $\sigma(i-1)<\sigma(i)>\sigma(i+1)$. We denote by $\pk(\sigma)$ the number of peaks of $\sigma$.
\end{defi}

\begin{defi}
 A {\it left peak} of a permutation $\sigma\in\mathfrak{S}_n$ is an integer $i$ with $1\leq i \leq n-1$ 
 and $\sigma(i-1)<\sigma(i)>\sigma(i+1)$ (with the convention $\sigma(0)=0$). We denote by $\pk^{\circ}(\sigma)$ the 
 number of left peaks of $\sigma$.
\end{defi}

\begin{prop} \cite[Sec. 9]{schocker}
The elements
\[
  P_k = \sum_{\substack{ \sigma \in\mathfrak{S}_n \\ \pk(\sigma)=k  }} \sigma  
\]
for $0\leq k \leq \lfloor \frac{n-1}{2} \rfloor $ span the \emph{Eulerian peak algebra} $\mathcal{P}_n$, a commutative subalgebra of $\mathcal{D}_n$.
Note that $\dim(\mathcal{P}_n) = \lceil \frac{n}{2} \rceil$.
\end{prop}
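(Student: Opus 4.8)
The plan is to separate the statement into three parts: that each $P_k$ lies in $\mathcal{D}_n$, that the linear span of the $P_k$ is closed under the product of $\mathbb{Z}[\mathfrak{S}_n]$ and commutative, and that its dimension is $\lceil n/2\rceil$. The first and third parts are elementary, so the substance is the middle one.

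First I would observe that $\pk(\sigma)$ is determined by $\Des(\sigma)$: an integer $i$ is a peak of $\sigma$ exactly when $i\geq 2$, $i\in\Des(\sigma)$ and $i-1\notin\Des(\sigma)$. Hence, writing $\pk(I)$ for the number of peaks of any permutation with descent composition $I$, we get $P_k=\sum_{\pk(I)=k}R_I$, which shows $P_k\in\mathcal{D}_n$. Since distinct values of $k$ involve disjoint families of $R_I$, linear independence of the $R_I$ gives that the $P_k$ are linearly independent; as the values of $\pk$ attained are exactly $0,\dots,\lfloor\frac{n-1}{2}\rfloor$ (the upper bound coming from the fact that peaks occupy positions $2,\dots,n-1$ with no two consecutive), the dimension will be $\lfloor\frac{n-1}{2}\rfloor+1=\lceil n/2\rceil$ once closure is established.

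For closure and commutativity I would avoid computing the structure constants of $R_I\cdot R_J$ directly, since the peak number of a product is not visibly controlled. Instead I would exhibit a commuting one-parameter family generating the span. Concretely, I would look for an element $\Psi(t)=\sum_{\sigma}g_{\pk(\sigma)}(t)\,\sigma$ whose coefficient of $\sigma$ depends on $\sigma$ only through $\pk(\sigma)$ and is polynomial in $t$, so that automatically $\Psi(t)\in\mathcal{D}_n$ and $\Psi(t)=\sum_k g_k(t)\,P_k$, subject to a semigroup law $\Psi(s)\cdot\Psi(t)=\Psi(\phi(s,t))$ for a symmetric operation $\phi$. Such a family is furnished by the ruffle-type shuffles of Doyle and Rockmore / by Stembridge's enriched $P$-partitions, where composing two shuffles multiplies their parameters, so that one may take $\phi(s,t)=st$. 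A Vandermonde argument, using that the $g_k$ are linearly independent polynomials of controlled degree, shows that the $\Psi(t)$, as $t$ varies, span exactly $\Vect\{P_k\}$; and since $\Psi(s)\cdot\Psi(t)=\Psi(st)$ is again a member of this spanning family, that span is closed under multiplication, while the symmetry of $\phi$ makes it commutative.

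The main obstacle is constructing $\Psi(t)$ with the exact semigroup property, i.e.\ producing the peak analogue of the multiplicativity of riffle shuffles; this is where the enriched $P$-partition machinery (equivalently, the passage through the peak subalgebra of quasisymmetric functions dual to $\mathbf{NSym}$) does the real work, and is the step I expect to require the most care. An alternative route would be to first establish the finer subalgebra spanned by the peak-set classes $\sum_{\mathrm{Peak}(\sigma)=\Lambda}\sigma$ and then descend to the coarse classes $P_k$; but proving that the coarse classes remain closed still requires precisely the symmetry encoded in the semigroup law above, so I would favour the generating-function approach from the outset.
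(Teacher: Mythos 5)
Your proposal is correct and follows essentially the same route as the paper's algebraic argument: the paper realizes your one-parameter family $\Psi$ concretely as the degree-$n$ components of $V_{2k}^- = (V_1^-\star V_1^+)^{\star k}$ in $\widehat{\Sym}$, obtains the semigroup law $V_{2i}^-V_{2j}^- = V_{4ij}^-$ from the splitting formula for grouplike elements, and gets the spanning statement from the triangularity $V_{2k,n}^- = 2^{2k-1}P_{k-1,n} + \sum_{l<k-1} a_{k,l,n}P_{l,n}$ (your Vandermonde step, carried out via Lemma~\ref{multfamilytoidempotents} with parameter values $2i$). The enriched $P$-partition machinery you defer to is exactly the equivalent formulation the paper notes in its remark translating the $V$'s into Petersen's order polynomials $\rho(x)$.
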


The Eulerian peak algebra first appeared in Schocker's analysis \cite{schocker} of a larger, noncommutative, peak algebra, spanned by sums of permutations with the same peak positions. This work determined the idempotents of $\mathcal{P}_n$, which we will recover in the present Theorem~\ref{theoidemp}.

Aguiar, Bergeron and Nyman \cite{aguiarbergeronnyman} also studied this larger peak algebra, by viewing it as an ideal within the image of projecting the type B descent algebra to $\mathcal{D}_n$. This viewpoint uncovered a left peak variant of $\mathcal{P}_n$:

\begin{prop} \cite[Sec. 6.3]{aguiarbergeronnyman}
The elements
\[
  P^\circ_k = \sum_{\substack{ \sigma \in\mathfrak{S}_n \\ \pk^\circ(\sigma)=k  }} \sigma  
\]
for $0\leq k \leq \lfloor \frac{n}{2} \rfloor $ span the \emph{Eulerian left peak algebra} $\mathcal{P}_n^\circ$,  a commutative subalgebra of $\mathcal{D}_n$.
 Note that $\dim(\mathcal{P}_n^\circ) = \lfloor \frac{n}{2} \rfloor + 1$.
\end{prop}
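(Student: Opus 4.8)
The plan is to separate the dimension count from the algebra structure, and to handle the latter inside the algebra of noncommutative symmetric functions $\Sym_n$, which carries an internal product making it isomorphic to $\mathcal{D}_n$ by Solomon's theorem. First I would record the descent-theoretic meaning of the statistic. From $\sigma(i-1)<\sigma(i)>\sigma(i+1)$ with the convention $\sigma(0)=0$, one sees that $i$ is a left peak precisely when $i\in\Des(\sigma)$ while $i-1\notin\Des(\sigma)$, where $0\notin\Des(\sigma)$ by convention. Thus the left peaks are exactly the left endpoints of the maximal runs of consecutive integers in $\Des(\sigma)$, so $\pk^\circ(\sigma)$ equals the number of such runs and depends only on $\Des(\sigma)$. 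In particular $P^\circ_k\in\mathcal{D}_n$, and $P^\circ_k=\sum_I R_I$ summed over the compositions $I\vDash n$ whose descent set $\Des(I)$ splits into exactly $k$ runs. This already shows the $P^\circ_k$ are linearly independent, being sums over pairwise disjoint nonempty subsets of $\mathfrak{S}_n$.

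For the dimension I would determine the range of $\pk^\circ$. Two left peaks cannot be adjacent, since two adjacent descents cannot both begin a run; hence the left peaks form an independent set among the positions $1,\dots,n-1$, giving $\pk^\circ(\sigma)\le\lceil\frac{n-1}{2}\rceil=\lfloor\frac n2\rfloor$. Conversely every value $0,1,\dots,\lfloor\frac n2\rfloor$ is attained: the descent class $\{1,3,5,\dots,2k-1\}$ consists of $k$ singleton runs, so any $\sigma$ in it has $\pk^\circ(\sigma)=k$, and $k\le\lfloor\frac n2\rfloor$ is exactly the condition $2k-1\le n-1$. Combined with the independence above, this yields $\dim\mathcal{P}^\circ_n=\lfloor\frac n2\rfloor+1$, once the span is known to be a subalgebra.

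The substance is to show $\mathrm{span}\{P^\circ_k\}$ is closed under the product and commutative. For closure I would compute products through the internal-product formula $S^I * S^J=\sum_K S^K$, the sum over nonnegative integer matrices with row sums $I$ and column sums $J$, reading $K$ off the matrix; translating the ribbon expansion of $P^\circ_k$ into this setting, the claim becomes that the run-number multiset produced by multiplying two descent classes depends only on the two run-numbers. The natural conceptual source of this stability — and the reason I expect it to hold — is Stembridge's theory of enriched $P$-partitions, under which the left peak functions form a subcoalgebra of $\mathrm{QSym}$ dual to $\mathrm{span}\{P^\circ_k\}$; equivalently one may invoke the Aguiar--Bergeron--Nyman realization of $\mathcal{P}^\circ_n$ as a projection of the type $B$ descent algebra, which is where the cited result originates.

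The main obstacle is commutativity, which seems hard to extract directly from the internal-product matrices. My preferred route is to exploit the commutative semisimple quotient $\pi\colon\mathcal{D}_n\twoheadrightarrow\mathcal{D}_n/\mathrm{rad}(\mathcal{D}_n)$, realized concretely as the morphism $\Sym_n\to\mathrm{Sym}_n$ forgetting noncommutativity, whose image (the algebra of class functions, indexed by partitions of $n$) is commutative and whose kernel is the radical. Once closure is established, it suffices to show $\pi$ restricts to an injection on $\mathrm{span}\{P^\circ_k\}$, for a subalgebra embedding into a commutative algebra is commutative. Concretely I would compute the images $\pi(P^\circ_k)$ in the partition-indexed quotient and verify that these $\lfloor\frac n2\rfloor+1$ elements are linearly independent. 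This injectivity is where the real work lies; the cocommutativity underlying the enriched-$P$-partition picture is the structural statement that should guarantee it, and it is the step I expect to require the most care.
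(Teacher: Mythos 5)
Your reduction of $\pk^\circ$ to the descent set (left peaks are the left endpoints of the maximal runs of consecutive integers in $\Des(\sigma)$) and your dimension count are both correct and match what is needed. However, the two steps that carry all the content of the proposition are left unproven. For closure, you reduce to the statement that the product structure constants, summed over descent classes with fixed run-number, depend only on the run-number of the target class --- and then you do not argue this, but instead appeal to Stembridge's enriched $P$-partitions or to the Aguiar--Bergeron--Nyman type~$B$ projection. Since the proposition itself is a citation of Aguiar--Bergeron--Nyman, invoking that work at the decisive moment reduces your proof to a restatement of the citation. For commutativity, your strategy (embed into the commutative semisimple quotient via $\Gamma:\Sym_n\to Sym_n$, whose kernel is the radical, and check injectivity on the span) is sound in principle, but the injectivity of $\Gamma$ on $\Vect(P^\circ_k)$ is precisely the hard computation, as you yourself acknowledge; showing that the $\lfloor n/2\rfloor+1$ symmetric functions $\Gamma(P^\circ_k)=\sum_{\run^\circ(I)=k}r_I$ are linearly independent requires identifying their expansions in the $p_\lambda/z_\lambda$ basis (they turn out to span the same space as $\sum_{\ell_o(\lambda)=k}p_\lambda/z_\lambda$), and no argument for this is given.

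For comparison, the paper closes both gaps at once by working with the elements $V_{2k+1}\pp=V_1\pp\star(V_1\mm\star V_1\pp)^{\star k}$ in $\widehat{\Sym}$, where $V_1\pp=\sum R_n$ and $V_1\mm=\sum R_{(1^n)}$ are grouplike. The splitting formula then yields $V_a\pp V_b\pp=V_{ab}\pp$, so $\Vect(V_{2k+1,n}\pp)$ is closed under the internal product and visibly commutative, with no need to pass through $\Gamma$. A separate combinatorial argument (counting weighted ways to cut a ribbon into a descentless first piece followed by peakless pieces) shows $V_{2k+1,n}\pp=2^{2k}P^\circ_{k,n}+\sum_{l<k}a^\circ_{k,l,n}P^\circ_{l,n}$, a triangular change of basis identifying this span with $\Vect(P^\circ_k)$. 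If you want to complete your own route, the missing ingredient is exactly such an explicit computation --- either this triangularity, or the power-sum expansion of $\Gamma(P^\circ_k)$ via $\Gamma(V_1\pp)=\sum h_n=\exp_\star(\sum p_i/i)$ and $\Gamma(V_1\mm)=\sum e_n=\exp_\star(\sum(-1)^{i-1}p_i/i)$.
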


Petersen \cite{petersen} computed a complete set of primitive orthogonal idempotents of both $\mathcal{P}_n$ and $\mathcal{P}_n^\circ$, in terms of enriched $P$-partitions. Our re-computation in the present Theorem~\ref{theoidemp} is uncannily similar, despite being in the different language of noncommutative symmetric functions.

Both $\mathcal{P}_n$ and $\mathcal{P}_n^\circ$ are subalgebras of our new algebra $\mathcal{W}^\pm_n$.

This idea of adding an initial 0 is also relevant in the case of runs. As mentioned in the introduction, this definition was used by Doyle and Rockmore~\cite{doyle}.

\begin{defi} \label{leftrun}
 With the convention $\sigma_0=0$, a {\it left run} of a permutation $\sigma\in\mathfrak{S}_n$ is a maximal monotone sequence of consecutive elements
 in $\sigma_0, \dots , \sigma_n$.
 We denote by $\run^\circ(\sigma)$ the number of left runs of $\sigma$.
\end{defi}

Note that if the first run of $\sigma$ is ascending, then $\run^\circ(\sigma) = \run(\sigma)$, and the first left run 
is obtained from the first run by adding the initial zero.
If the first run is descending, then $\run^\circ(\sigma) = \run(\sigma)+1$, and the first left run is $0,\sigma_1$.

%

\subsection{Noncommutative symmetric functions}

\label{nsym}

To find and express the idempotents of the run algebras, it is useful
to view the descent algebra from another perspective, that contains
extra algebraic structure.

Let $\Sym$ denote the Hopf algebra of noncommutative symmetric functions, which is the direct sum of $\Sym_n$, its degree $n$ homogeneous components (see \cite{gelfand} as a general reference on this subject).  
Its product $\star$ and coproduct $\Delta$ are defined below.
It will be convenient to also work in $\widehat{\Sym}$, the completion, which
allows infinite sums of the elements of $\Sym$, of unbounded degree. 

As a vector space, we have the identification $\Sym_{n}=\mathcal{D}_{n}$.
Under this identification, the image of $S^{I}$ are the {\it complete noncommutative symmetric functions},
and the image of $R_{I}$ are the {\it ribbon noncommutative symmetric functions}.
We write $S^{I},R_{I}$ for these images also.

The product on $\mathcal{D}_{n}$ is extended to $\Sym$ by the rule $xy=0$ if $x\in\Sym_m$, $y\in\Sym_n$, and $m\neq n$.
We call it the {\it internal product} of $\Sym$, to distinguish from the external product.
However, it is to be noted that the usual internal product of noncommutative symmetric functions is taken in the
opposite order (see \cite{gelfand}).

$\Sym$ also admits an {\it external product} $\star$, for which degree is additive:
$\Sym_{n}\star\Sym_{n'}\subseteq\Sym_{n+n'}$.
In the complete basis, this is given by concatenation of compositions:
\[
S^{(i_{1},\dots,i_{\ell(I)})}\star S^{(j_{1},\dots,j_{\ell(J)})}=S^{(i_{1},\dots,i_{\ell(I)},j_{1},\dots,j_{\ell(J)})}.
\]
Hence, for any composition $I=(i_1,\dots,i_r)$, we have $S^{I}=S_{i_{1}}\star\dots\star S_{i_{r}}$,
so $\Sym$ is a free algebra for the external product, generated by
the $S_{i}$, the complete noncommutative symmetric functions indexed
by compositions with a single part. 

In the ribbon basis, the external product results in ``near-concatenation'':
\[
R_{(i_{1},\dots,i_{\ell(I)})}\star R_{(j_{1},\dots,j_{\ell(J)})}=R_{(i_{1},\dots,i_{\ell(I)},j_{1},\dots,j_{\ell(J)})}+R_{(i_{1},\dots,i_{\ell(I)-1},i_{\ell(I)}+j_{1},j_{2},\dots,j_{\ell(J)}).}
\]
For example, $S^{(2,3)}\star S^{(5,1,1)}=S^{(2,3,5,1,1)}$ and $R_{(2,3)}\star R_{(5,1,1)}=R_{(2,3,5,1,1)}+R_{(2,8,1,1)}.$
We write $F^{\star k}$ for powers under the external product, to distinguish
from the internal product power $F^{k}$.

The two products are related by the {\it splitting formula} of \cite[Prop. 5.2]{gelfand};
this involves a third algebraic operation, the coproduct. This is
the (external) algebra homomorphism $\Delta:\Sym\rightarrow\Sym\otimes\Sym$
with $\Delta(S_{n})=\sum_{i=0}^{n}S_{i}\otimes S_{n-i}$. (The convention
is that $S_{0}=1$.) It follows that $\sum_{n\geq0}R_{n}$ and $\sum_{n\geq0}R_{(1^{n})}$
are {\it grouplike} --- that is, they satisfy $\Delta(G)=G\otimes G$.
It is a standard fact of Hopf algebras that (external) products of
grouplike elements are also grouplike. For grouplike $G,$ the splitting
formula simplifies to: 
\[
 G(F_{1}\star\dots\star F_{k})=(GF_{1})\star\dots\star(GF_{k}).
\]
This simplified version is the only case of the splitting formula
that we will need.

\begin{rema}
Since we are interested in subalgebras of the descent algebra, it is natural to use a special 
symbol for the external product of (noncommutative) symmetric functions, the product 
without a special symbol meaning the internal product. However this differs from the usual notation.
\end{rema}

\subsection{Symmetric functions}
\label{sym}
Let $Sym$ denote the algebra of symmetric functions and $Sym_n$ its degree $n$ homogeneous component.
We follow the notation of \cite{macdonald} so that $h_\lambda$, $e_\lambda$, $p_\lambda$, $s_\lambda$
are respectively the homogeneous, elementary, power sum, and Schur symmetric functions associated with 
a partition $\lambda$. If $I=(i_1,\dots,i_{\ell(I)})$ is a composition, there is a unique ribbon (i.e.~a skew
shape which is connected and contains no $2\times 2$ square) whose row lengths from top to bottom
in French notation are $i_1,\dots,i_{\ell(I)}$, and the skew Schur function associated with it is denoted $r_I$.
It is given by $r_I= \sum_{J\preccurlyeq I} (-1)^{\ell(I)-\ell(J)} h_J $ where $h_J = \prod h_j $ where $j$ runs through the parts of $J$.
To be coherent with our notation on noncommutative symmetric functions, the usual product of $Sym$
is denoted $\star$, whereas the {\it internal product} on $Sym_n$ has no particular symbol.



There is a well-known map $\Gamma:\Sym \rightarrow Sym$, defined by the two equivalent formulas:
\[
   \Gamma( R_I ) = r_{I}, \qquad \Gamma(S_I) = h_I.
\]
In particular, we have:
\[
  \Gamma(S_n) = h_n, \qquad \Gamma( R_{(1^n)} ) = e_n. 
\]
This map is an algebra morphism for both the internal and external product of symmetric functions:
\[
   \Gamma( x y ) = \Gamma(x)\Gamma(y),\qquad \Gamma ( x \star y ) = \Gamma(x) \star \Gamma(y).
\]

One fact about $Sym_n$  that will be particularly useful for us is that the rescaled power sums $\frac{1}{z_\lambda} p_\lambda$ form a basis of orthogonal idempotents (under the internal product). It follows that, if $\Lambda_1,\Lambda_2,\dots,\Lambda_K$  are disjoint subsets of partitions of $n$, then $\sum_{\lambda \in \Lambda_k} \frac{1}{z_\lambda} p_\lambda$ form a coarser set of orthogonal idempotents. In fact, every subalgebra of $Sym_n$ has a basis of orthogonal idempotents of this form, and identifying the $\Lambda_k$ is particularly interesting if we view $Sym_n$ as the character ring of $\mathfrak{S}_n$ \cite[Sec. 7.18]{stanley}. (In this light, $\Gamma$ is the homomorphism of Solomon from the descent algebra of a finite Coxeter group to its character ring \cite{solomon}.) For example, the idempotents of $\Gamma(\mathcal{E}_n)$, the symmetric function image of the Eulerian algebra, are $\sum_{\ell(\lambda)=k} \frac{1}{z_\lambda} p_\lambda$ \cite[Sec. 5.3]{gelfand}, corresponding to the indicator functions on permutations with $k$ cycles. By definition, $\sum_{\ell(I)=k} r_I$ is also a basis of $\Gamma(\mathcal{E}_n)$. So, writing $\chi_I$ for the character corresponding to $r_I$, this slickly recovers a result of Foulkes \cite[Th. 3.1]{foulkes}, \cite{kerberthurlings}:
\[
\left\{ \sum _{\ell(I)=k} \chi_I : 1\leq k \leq n \right\}
\]
gives a basis for the subspace of functions $\mathfrak{S}_n \rightarrow \mathbb{R}$ depending only on the number of cycles. 
Applying this idea to our new commutative run algebras yields similar characters that depend only on the numbers of odd cycles and of even cycles, see Corollary~\ref{foulkescharacters}.

\section{Bijective proof for the existence of run algebras}
\label{sec:bij}

Recall that $\run(\sigma)$ is defined at the beginning of the introduction, and $\run^\circ(\sigma)$ in Definition~\ref{leftrun}.

\begin{defi}
Let
\begin{align*}
 \mathfrak{W}_k\pp & := \{ \sigma \in \mathfrak{S}_n \,:\, \run(\sigma) = k \text{ and the first run is ascending} \}, \\
 \mathfrak{W}_k\mm & := \{ \sigma \in \mathfrak{S}_n \,:\, \run(\sigma) = k \text{ and the first run is descending} \},  \\
 \mathfrak{W}_k & := \{ \sigma \in \mathfrak{S}_n \,:\, \run(\sigma) = k \}  = \mathfrak{W}_k^+ \cup \mathfrak{W}_k^-,  \\
\end{align*}
for $1\leq k \leq n-1$, and 
\begin{align*}
 \mathfrak{W}^\circ_k & := \{ \sigma \in \mathfrak{S}_n \,:\, \run^\circ(\sigma) = k \}  = \mathfrak{W}_k^+ \cup \mathfrak{W}_{k-1}^-
\end{align*}
for $1\leq k \leq n$. The given range for $k$ indicates when these sets are nonempty. 
Note that in the definition of $\mathfrak{W}^\circ_k$ we need to specify that $\mathfrak{W}_n^+ = \mathfrak{W}_0^- = \emptyset$.
And let 
\begin{align*}
  W_k^-     := \sum_{\sigma \in \mathfrak{W}_k^- } \sigma, \qquad
  W_k^+     := \sum_{\sigma \in \mathfrak{W}_k^+ } \sigma, \qquad
  W_k       := \sum_{\sigma \in \mathfrak{W}_k } \sigma, \qquad
  W_k^\circ := \sum_{\sigma \in \mathfrak{W}_k^\circ } \sigma.
\end{align*}
Note that $W_k =  W_k^-  +  W_k^+$ and $W_k^\circ =  W_{k-1}^-  +  W_{k}^+$.
Then we define subspaces of the descent algebra $\mathcal{D}_n$ as follows:
\begin{align*}
 \mathcal{W}^{\pm} & := \Vect( W_k\pp,W_k\mm \, : \, 1\leq k \leq n-1 \}, \\
 \mathcal{W}       & := \Vect( W_k           \, : \, 1\leq k \leq n-1 \}, \\
 \mathcal{W}^\circ & := \Vect( W^\circ_k \, : \, 1\leq k \leq n \},          \\
 \mathcal{C}       & := \Vect( W_{2k} \, : \, 1\leq k \leq \lfloor\tfrac{n-1}{2}\rfloor \} \oplus
                        \Vect( W_{2k-1}\pp,W_{2k-1}\mm \, : \, 1\leq k \leq \lfloor\tfrac{n}{2}\rfloor \}.
\end{align*}
\end{defi}

Note that in each definition, the given generators are linearly independent, since they are sums of permutations over
disjoint nonempty subsets. It follows that:
\[
  \dim(\mathcal{W}^{\pm}) = 2n-2, \quad \dim(\mathcal{W}) = n-1, \quad \dim( \mathcal{W}^\circ) = n, \quad
  \dim(\mathcal{C}) = \lfloor \tfrac{3n-2}{2}  \rfloor.
\]

\begin{theo}  \label{subalg}
The space $\mathcal{W}^{\pm}$ is a subalgebra of $\mathcal{D}_n$, 
and the spaces $\mathcal{W}$, $\mathcal{W}^\circ$, $\mathcal{C}$, $\mathcal{P}$ and $\mathcal{P}^\circ$ are subalgebras of $\mathcal{W}^\pm$.
\end{theo}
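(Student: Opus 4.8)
The plan is to deduce everything from the single hard fact that $\mathcal{W}^\pm$ is closed under the internal (group-algebra) product; all the other assertions then follow by short linear-algebra and symmetry arguments. Membership $\mathcal{W}^\pm\subseteq\mathcal{D}_n$ is immediate: the pair (number of runs, direction of the first run) is a function of $\Des(\sigma)$ — indeed $\run(\sigma)$ is one more than the number of positions where the descent-indicator string of $\sigma$ changes value, and the first run is ascending exactly when $1\notin\Des(\sigma)$ — so each set $\mathfrak{W}_k^+$, $\mathfrak{W}_k^-$ is a union of descent classes and each $W_k^\pm$ lies in $\mathcal{D}_n$.

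For closure, since products of elements of $\mathcal{D}_n$ stay in $\mathcal{D}_n$ (Solomon), the product $W_k^\epsilon W_l^\delta$ equals $\sum_M c_M R_M$ for scalars $c_M$ that are constant on descent classes. Writing the coefficient as $c_M=\#\{\alpha\in\mathfrak{W}_k^\epsilon : \alpha^{-1}\sigma\in\mathfrak{W}_l^\delta\}$ for any fixed $\sigma$ with $\Des(\sigma)=\Des(M)$, the theorem reduces to proving $c_M=c_{M'}$ whenever $M$ and $M'$ have the same number of runs and the same first-run direction. This is the statement I would establish bijectively: for $\sigma,\sigma'$ of equal run-type I would construct a bijection between $\{\alpha\in\mathfrak{W}_k^\epsilon:\alpha^{-1}\sigma\in\mathfrak{W}_l^\delta\}$ and its primed analogue. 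I expect to reduce to the case where $\Des(\sigma)$ and $\Des(\sigma')$ differ by changing a single run length by one, and to exhibit a local, length-changing bijection there; assembling these local moves into a global coefficient-preserving bijection is the main obstacle.

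Granting that $\mathcal{W}^\pm$ is an algebra, the remaining subalgebras follow from two $w_0$-symmetries. Complementation $c(x)=w_0 x$ satisfies $c(xy)=c(x)y$ and sends $W_k^\epsilon\mapsto W_k^{\bar\epsilon}$, hence fixes each $W_k=W_k^++W_k^-$ (complementation preserves $\run$); thus $W_kW_l=c(W_k)W_l=c(W_kW_l)$ is $c$-fixed, which forces its $W_m^+$- and $W_m^-$-coefficients to coincide, so $W_kW_l\in\mathcal{W}$ and $\mathcal{W}$ is a subalgebra. Conjugation $\phi(x)=w_0xw_0$ is an algebra automorphism reversing the descent string, so it fixes $W_k^\pm$ for $k$ odd and swaps $W_k^+\leftrightarrow W_k^-$ for $k$ even; its fixed-point subalgebra is spanned by the odd $W_k^\pm$ and the even $W_k^++W_k^-$, which is exactly $\mathcal{C}$, so $\mathcal{C}$ is a subalgebra. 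For the peak algebras I would use the run--peak dictionary: with $k$ runs one has $\pk=\lceil(k-1)/2\rceil$ when the first run ascends and $\pk=\lfloor(k-1)/2\rfloor$ when it descends, giving $P_j=W_{2j}^++W_{2j+1}^++W_{2j+1}^-+W_{2j+2}^-\in\mathcal{W}^\pm$, and similarly $P_j^\circ=W_{2j}^++W_{2j+1}^++W_{2j-1}^-+W_{2j}^-\in\mathcal{W}^\pm$; since $\mathcal{P}$ and $\mathcal{P}^\circ$ are already algebras by the two cited Propositions, being subspaces of $\mathcal{W}^\pm$ makes them subalgebras.

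The one case not covered by these shortcuts is $\mathcal{W}^\circ$. In coordinates, membership in $\mathcal{W}^\circ$ is the condition $a_m^-=a_{m+1}^+$ on the expansion $\sum_m(a_m^+W_m^++a_m^-W_m^-)$, equivalently that the product coefficient depends only on $\run^\circ$; this is strictly coarser than dependence on $(\run,\text{direction})$ and is not implied by closure of $\mathcal{W}^\pm$, while none of the $w_0$-induced maps stabilizes $\mathcal{W}^\circ$ (the defining shift $W_{k-1}^-+W_k^+$ is not symmetric under them, and the two halves $\mathfrak{W}_k^+$, $\mathfrak{W}_{k-1}^-$ of a given $\run^\circ$-class are not even equinumerous). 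Hence I would obtain $\mathcal{W}^\circ$ by rerunning the bijective argument of the second paragraph with $(\run,\text{direction})$ replaced by the single statistic $\run^\circ$ — which is again a function of $\Des$ — proving that $c_M$ depends only on $\run^\circ(M)$. I regard this as a second instance of the same main obstacle rather than a genuinely new difficulty.
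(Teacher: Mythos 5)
Your overall architecture matches the paper's: prove closure of $\mathcal{W}^\pm$ by a coefficient-comparison/bijection argument, derive $\mathcal{W}$, $\mathcal{C}$, $\mathcal{P}$, $\mathcal{P}^\circ$ by symmetry and linear algebra, and treat $\mathcal{W}^\circ$ by a second bijective argument. The derived parts are sound: your fixed-point argument for $\mathcal{C}$ under $x\mapsto\omega x\omega$ is exactly the paper's; your argument for $\mathcal{W}$ via left multiplication by $\omega$ is a harmless variant of the paper's observation that $\mathcal{W}=W_1\mathcal{W}^\pm$ is a right ideal; your run--peak dictionary reproduces the paper's identities $P_k=W_{2k}^++W_{2k+1}^-+W_{2k+1}^++W_{2k+2}^-$ and $P_k^\circ=W_{2k-1}^-+W_{2k}^++W_{2k}^-+W_{2k+1}^+$; and invoking the cited propositions of Schocker and Aguiar--Bergeron--Nyman for closure of $\mathcal{P}$, $\mathcal{P}^\circ$ is legitimate, though the paper instead reproves closure internally via $\mathcal{P}^\circ=\mathcal{W}^\circ\cap\mathcal{C}$ and $\omega\mathcal{P}\omega=\{x\in\mathcal{W}^\circ : x\omega=x\}$. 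Your diagnosis that $\mathcal{W}^\circ$ is not reachable by the $\omega$-symmetries and needs a separate argument is also correct.

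The genuine gap is that the one hard step --- the bijection --- is never constructed; you explicitly defer it as ``the main obstacle,'' but that obstacle is precisely where all the work of the theorem lies. Concretely, the paper (i) proves that each $\mathfrak{W}_j^+$, $\mathfrak{W}_j^-$ is connected in the left weak order, which is what justifies reducing to a comparison of $\sigma$ and $\tau=s_i\sigma$ of the same run-type (your reduction to ``change one run length by one'' needs an analogous connectivity statement, also unproved); and (ii) for adjacent $\sigma,\tau$ with $i,i+1$ adjacent in $\sigma$, builds a bijection $\Phi:\mathfrak{C}_{j,k}^\sigma\to\mathfrak{C}_{j,k}^\tau$ in three cases, the last of which requires a factorization $\alpha=X_1X_2X_3X_4$ and a rearrangement $(\alpha,\beta)\mapsto(X_1X_3\overline{X_2}X_4,\beta')$ with a compensating redefinition of $\beta$; checking that this lands in $\mathfrak{C}_{j,k}^\tau$ and is invertible is the substance of the proof. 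Without that construction, closure of $\mathcal{W}^\pm$ --- and hence everything downstream --- is unestablished. For $\mathcal{W}^\circ$ the situation is better than you anticipate: once $\mathcal{W}^\pm$ is known to be an algebra, the coefficients $d_{j,k}^\sigma$ are already constant on each $\mathfrak{W}_m^\pm$, so it suffices to compare a single well-chosen pair $\sigma\in\mathfrak{W}_m^+$ with $\sigma_1=n-1$, $\sigma_2=n$ and $\tau=s_{n-1}\sigma\in\mathfrak{W}_{m-1}^-$, for which the difficult third case never arises; but this too must actually be carried out, not merely announced.
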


Note that $\mathcal{W}$ and $\mathcal{W}^\circ$ are respectively the reduced turning algebra and the turning algebra of Doyle and Rockmore~\cite{doyle}.
And $\mathcal{P}$ and $\mathcal{P}^\circ$ are the Eulerian peak and Eulerian left peak algebra, as described in Subsection~\ref{peakseulerian}.

The rest of this section contains the bijective proof. In Subsections~\ref{subsec:connected} 
to~\ref{subsec:bijphi}, we prove the result for $\mathcal{W}^\pm$. 
Then, in Subsection~\ref{subsec:othersubalg}, we show how the other 
parts of the theorem follow.

The beginning of the proof, in particular using the connectedness in the left weak order, essentially follows 
Atkinson's proof of the existence of the descent algebra \cite{atkinson} (which is also in Tits \cite[\S6]{tits}
although in a somewhat different language and very concise style).

\subsection{Alternating runs and the left weak order}

\label{subsec:connected}

This subsection contains a  preliminary result about the sets $\mathfrak{W}_j^+$ and $\mathfrak{W}_j^-$.
Let $s_i$ denote the simple transposition $(i,i+1)\in\mathfrak{S}_n$.
Let us recall that the left weak order on $\mathfrak{S}_n$ is defined by the covering 
relations $\sigma \lessdot s_i \sigma $ if 
$i$ is to the left of $i+1$ in $\sigma$, for all $\sigma\in\mathfrak{S}_n$ and $1\leq i\leq n-1$.
It is known that descent classes are intervals for this order \cite{bjorner}.
In particular, each descent class is a connected subset in $\mathfrak{S}_n$ (seen as an undirected
graph whose edges are given by the cover relation of the left weak order).

\begin{lemm}
 Let $I,J\subseteq [n-1]$ with $I\neq J$. The descent classes $\mathfrak{D}_I$ and $\mathfrak{D}_J$ are 
 connected to each other (in the sense that there are $u\in\mathfrak{D}_I$, $v\in\mathfrak{D}_J$ such that
 $u\lessdot v$ or $v\gtrdot u$) if and only if one of the following conditions is true:
 \begin{itemize}
  \item $I\subseteq J$ and $\# I = \# J -1$, or
  \item $J\subseteq I$ and $\# J = \# I -1$.
 \end{itemize}
\end{lemm}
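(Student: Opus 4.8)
The plan is to reduce the lemma to the effect of a single cover relation $\sigma \lessdot s_i\sigma$ of the left weak order on descent sets, and to read off both implications from that analysis.

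First I would fix a cover $\sigma \lessdot s_i\sigma$, so the value $i$ lies to the left of the value $i+1$ in $\sigma$; write $p$ and $q$ for the positions of $i$ and $i+1$, so that $p<q$. Since $s_i\sigma$ is obtained from $\sigma$ by exchanging the two values $i$ and $i+1$ in the one-line notation, a descent can only be created or destroyed at one of the positions $p-1,p,q-1,q$. The key (routine) computation is a case analysis at each of these positions, using that no value lies strictly between the consecutive integers $i$ and $i+1$: when $q>p+1$ every one of these positions keeps its descent status, so $\Des(s_i\sigma)=\Des(\sigma)$; and when $q=p+1$, i.e.\ $i$ is immediately followed by $i+1$, position $p$ turns from an ascent into a descent while the others are unchanged, so $\Des(s_i\sigma)=\Des(\sigma)\cup\{p\}$ with $p\notin\Des(\sigma)$. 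In particular $\Des(\sigma)\subseteq\Des(s_i\sigma)$ always, and the two sets differ by at most one element.

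This gives the ``only if'' direction at once: if $u\in\mathfrak{D}_I$ and $v\in\mathfrak{D}_J$ with $u\lessdot v$ and $I\neq J$, then $I\subseteq J$ and $\#I=\#J-1$, and the opposite orientation of the cover yields the symmetric conclusion. For the ``if'' direction, say $J=I\cup\{x\}$ with $x\notin I$; I would exhibit an explicit witness in $\mathfrak{D}_I$ admitting a cover into $\mathfrak{D}_J$. Let the maximal ascending runs forced by $I$ have lengths $c_1,\dots,c_m$, and define $u$ by filling the first run in increasing order with the top block of $c_1$ integers, the second run with the next block down, and so on. Then $u$ increases within each run and drops at every run boundary, so $\Des(u)=I$, and moreover each within-run ascent is by consecutive integers. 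Since $x\notin I$, the positions $x,x+1$ lie in the same run, so $u(x+1)=u(x)+1$; setting $i:=u(x)$, the value $i$ lies to the left of $i+1$, and by the $q=p+1$ case above the cover $u\lessdot s_i u$ lands in $\mathfrak{D}_{I\cup\{x\}}=\mathfrak{D}_J$.

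I expect the case analysis of the first step to be the most delicate part in terms of bookkeeping: one must check the four candidate positions and confirm that no descent changes except as claimed. The genuine idea, however, lies in the second step, namely producing a permutation in $\mathfrak{D}_I$ with a consecutive ascent exactly where the new descent is to be inserted; the consecutive-block permutation handles this uniformly, precisely because it places consecutive integers along every ascending run.
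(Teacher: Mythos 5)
Your proposal is correct and follows essentially the same route as the paper: analyze how a single cover $\sigma\lessdot s_i\sigma$ of the left weak order affects the descent set (unchanged when $i,i+1$ are not adjacent, one descent added at their common position when they are), and then exhibit an explicit witness realizing the cover between the two prescribed descent classes. Your consecutive-block construction just makes explicit the witness that the paper dismisses as "easy to find," and your choice to add the descent to the smaller class rather than remove it from the larger one is an immaterial reorientation of the same cover.
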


\begin{proof}
We compare the descents sets of $\sigma$ and $s_i \sigma$. If $i$ and $i+1$ are not adjacent in $\sigma$, $\Des(\sigma) = \Des(s_i\sigma)$.
Otherwise, either $\Des(\sigma) = \Des(s_i\sigma) \cup \{i\} $ or  $\Des(s_i\sigma) = \Des(\sigma) \cup \{i\} $.
It follows that if $\mathfrak{D}_I$ and $\mathfrak{D}_J$ are connected, $I$ and $J$ satisfy the given conditions.

As for the other implication, suppose $I$ and $J$ satisfy the given conditions, for example $I = J \cup\{i\}$.
It is easy to find $\sigma \in \mathcal{D}_I$ such that $\sigma_i = \sigma_{i+1}+1$.
Then the two permutations $\sigma$ and $s_i\sigma \in \mathcal{D}_J$ show that $\mathfrak{D}_I$ and $\mathfrak{D}_J$ are connected.
\end{proof}


\begin{lemm} \label{connected}
 Each set $\mathfrak{W}\pp_j$ or $\mathfrak{W}\mm_j$ is connected in the left weak order. 
\end{lemm}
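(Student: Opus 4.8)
## Proof Plan

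The goal is to show that each set $\mathfrak{W}_j^+$ (and symmetrically $\mathfrak{W}_j^-$) is connected as a subgraph of $\mathfrak{S}_n$ under the left weak order cover relations. The plan is to exhibit, for any $\sigma \in \mathfrak{W}_j^+$, an explicit path in the left weak order from $\sigma$ to a fixed canonical element of $\mathfrak{W}_j^+$, staying inside $\mathfrak{W}_j^+$ at every step. The natural strategy is to leverage the preceding lemma together with the fact that $\run(\sigma)$ and the direction of the first run depend only on $\Des(\sigma)$: the set $\mathfrak{W}_j^+$ is precisely a union of descent classes $\mathfrak{D}_I$, namely those $I$ whose associated descent composition corresponds to a permutation with exactly $j$ alternating runs and an ascending first run. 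Since each individual descent class is already known to be connected (it is an interval for the left weak order by \cite{bjorner}), it suffices to show that the union of these particular descent classes is connected, i.e.\ that one can hop between the relevant descent classes via the cover relations.

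The key observation is combinatorial: a descent set $I \subseteq [n-1]$ corresponds to a permutation with an ascending first run and $j$ runs exactly when $1 \notin I$ and the number of ``alternations'' (maximal runs) equals $j$; concretely, the alternating runs are controlled by how the descent set clusters, and $\run$ counts the number of maximal blocks of consecutive ascents or descents. First I would characterize $\mathfrak{W}_j^+$ as the union $\bigcup_{I \in \mathcal{A}} \mathfrak{D}_I$ over an explicit family $\mathcal{A}$ of descent sets, and then define a graph $\mathcal{G}$ on $\mathcal{A}$ where two descent sets are adjacent if their descent classes are connected in the sense of the previous lemma. By that lemma, $I, J \in \mathcal{A}$ are adjacent precisely when one contains the other with cardinalities differing by $1$. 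The crux is then to prove that $\mathcal{G}$ is connected: combined with the connectivity of each individual $\mathfrak{D}_I$, this yields connectivity of the whole union.

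To show $\mathcal{G}$ is connected, I would argue that from any $I \in \mathcal{A}$ one can reach a canonical target by repeatedly adding or removing a single element of the descent set while staying within $\mathcal{A}$ (i.e.\ preserving both the run count $j$ and the ascending-first-run condition). The natural canonical element is the descent set giving the zig-zag (alternating) permutation with $j$ runs and ascending start, which has descent set $\{2,4,6,\dots\}$ truncated appropriately. The move that changes the run count by passing through an intermediate descent set must be handled carefully: adding one descent to a set with $j$ runs can either preserve the run count or increase it, so I would need to verify that there is always an allowable single-element modification that keeps us inside $\mathcal{A}$ while progressing toward the canonical form.

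The main obstacle I anticipate is precisely this last step: ensuring that the single-element cover moves permitted by the previous lemma never force us to leave $\mathfrak{W}_j^+$, i.e.\ that we can always find an intermediate descent set in $\mathcal{A}$ connecting two given elements of $\mathcal{A}$. The difficulty is that adding or deleting a lone element of $I$ generically changes $\run$, so the path through $\mathcal{G}$ must be chosen so that run count and first-run direction are both conserved at each hop. I expect this is resolved by a careful case analysis on the local structure of the descent set near the modified position (whether the added or removed index sits at the boundary of a monotone block or in its interior), showing that interior modifications within a long ascending or descending run leave $\run$ and the first-run direction unchanged and thus keep us in $\mathcal{A}$. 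Managing these boundary cases cleanly, rather than the overall strategy, is where the real work lies.
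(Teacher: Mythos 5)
Your proposal follows essentially the same route as the paper's proof: both decompose $\mathfrak{W}_j^+$ into descent classes, invoke the connectivity of each class and the preceding lemma on when two classes are adjacent, and then connect the relevant descent sets by single-element modifications to a canonical zig-zag target such as $\{2,4,6,\dots\}$ suitably truncated. The paper likewise leaves the final case analysis informal (stating only the explicit characterization of the admissible descent sets as unions of $\lfloor j/2\rfloor$ intervals avoiding $1$ and containing $n-1$ iff $j$ is even), so your plan matches it in both strategy and level of detail.
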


\begin{proof}
By symmetry, we only need to consider the case of $\mathfrak{W}\pp_j$.
This set is a union of descent classes. Since each descent class is connected, it remains to show that
these descent classes are connected to each other, using the previous lemma. 

The descent classes included in $\mathfrak{W}\pp_j$ are those indexed by subsets $X\subseteq\{1,\dots,n-1\}$ 
such that:
\begin{itemize}
 \item $X$ is a union of $\lfloor j/2 \rfloor$ intervals and no less (i.e.~the number of descending runs),
 \item $1\notin X$  (since the elements in $\mathfrak{W}\pp_j$ begin with an ascending run),
 \item $n-1\in X$ if and only if $j$ is even (since the first run is ascending, the last run is descending if and only if $j$ is even).
\end{itemize}

By adding or removing elements in these subsets as in the previous lemma,
(and staying in the same class of subsets)
we can reach a particular chosen subset, for example
$\{2,4,6,\dots, 2 \lfloor j/2 \rfloor \}$ if $j$ is odd and 
$\{2,4,6,\dots, 2 \lfloor j/2 \rfloor-1, n-1 \}$ if $j$ is even.
This fact is rather straightforward so we omit the full formal proof.
Connectedness easily follows.
\end{proof}

\subsection{The scheme of proof}


To prove that $\mathcal{W}^\pm$ is an algebra, we show $W\pp_jW\pp_k\in\mathcal{W}^\pm$. 
The other products, i.e.~$W_j\pp W_k\mm $, $W_j\mm W_k\pp$, and $W_j\mm W_k\mm$, can be 
done similarly (or more formally, one can use symmetry properties described in 
Subsection~\ref{subsec:othersubalg}).

For each permutation $\sigma\in\mathfrak{S}_n$, let
\[
  \mathfrak{C}_{j,k}^\sigma = \{ (\alpha,\beta) \in \mathfrak{W}_j\pp \times \mathfrak{W}_k\pp \, : \, \alpha\beta=\sigma \}.
\]
Then $c_{j,k}^\sigma = \# \mathfrak{C}_{j,k}^\sigma $ is the coefficient of $\sigma$ in the product 
$W\pp_j W\pp_k$. Our goal is to prove that $ c_{j,k}^\sigma = c_{j,k}^\tau $ if $\sigma,\tau$ are both in 
$\mathfrak{W}_m\pp$ or both in $\mathfrak{W}_m\mm$. 

We only consider the case where 
$\sigma$ and $\tau$ are neighbours in the left weak order, i.e., either $\sigma \lessdot \tau$
or $\tau \lessdot \sigma$.
Indeed, using the connectedness shown in Lemma~\ref{connected}, this implies the general case. 
So we assume $ s_i \sigma = \tau$ for some $i$.

Let us first consider the case where $i$ and $i+1$ are not adjacent in $\sigma =  \sigma_1 \dots \sigma_n$ 
(and hence in $\tau=s_i\sigma$). 
It implies that $\sigma$ and $\tau$ are in the same descent class. 
But since $W_j^+W_k^+$ is in the descent algebra (as $W_j^+$ and $W_k^+$ both are), the coefficients
of $\sigma$ and $\tau$ are equal, i.e.~$c_{j,k}^\sigma = c_{j,k}^\tau $ as we wanted.

It remains to consider the case where $i$ and $i+1$ are adjacent in $\sigma$ 
(and hence in $\tau$).
We will give a bijection $ \Phi : \mathfrak{C}_{j,k}^\sigma \to \mathfrak{C}_{j,k}^\tau $.
This bijection is rather long to define as we need to consider many cases, and it is
described in the next subsection. Note that
$\sigma$ and $\tau$ have symmetric roles here, since we did not specify which of $\sigma \lessdot \tau$
or $\tau \lessdot \sigma$ is true. So by exchanging the role of $\sigma$ and $\tau$, the definition
in the next subsection also describes another map $ \Psi : \mathfrak{C}_{j,k}^\tau \to \mathfrak{C}_{j,k}^\sigma $.
At each step of the definition, we can check that $\Psi$ is in the inverse map of $\Phi$, so that these maps
are indeed bijections.

\subsection{\texorpdfstring{The bijection $\Phi$}{The bijection Φ}}

\label{subsec:bijphi}

We are now in the case where $i$ and $i+1$ are adjacent in $\sigma$.
So there is an integer $h$ such that $\tau = s_i\sigma=\sigma s_h$,
and also $\sigma( \{h,h+1\} ) = \tau( \{h,h+1\} ) = \{i,i+1\} $. 

Note that for any $u\in\mathfrak{S}_n$, $u$ and $us_1$ cannot be both
in $\mathfrak{W}_m^+$ or both in $\mathfrak{W}_m^-$. The same is true for $u$ and $u s_{n-1}$.
So we have $2\leq h \leq n-2$.

\subsubsection{First case}

In the first case, we assume that $s_i \alpha \in \mathfrak{W}_j\pp$. 
Thus we can define $\Phi((\alpha,\beta)) = (s_i\alpha,\beta)$, this pair being clearly in 
$\mathfrak{C}_{j,k}^\tau$.

Note that the pair $(s_i\alpha,\beta) \in\mathfrak{C}_{j,k}^\tau$ also leads to the first case
when we apply the map $\Psi$, so that $\Psi \circ \Phi$ is the identity when applied on 
$(\alpha,\beta)$ falling in the first case.

\smallskip

From now on, assume $s_i \alpha \notin \mathfrak{W}\pp_j $. It follows that $s_i\alpha$ is not 
in the same descent class as $\alpha$, so $i$ and $i+1$ are adjacent in $\alpha$.
So there is an integer $g$ such that $s_i\alpha = \alpha s_g$. 
We also have $s_g \beta = \beta s_h$ since $\alpha s_g \beta = s_i \alpha \beta = \alpha \beta s_h $.
Also, $\alpha(\{g,g+1\}) = \{i,i+1\}$ and $\beta(\{h,h+1\})=\{ g,g+1 \}$.

\subsubsection{Second case}

In the second case, we assume that $s_g\beta \in \mathfrak{W}_k\pp$. We define 
$\Phi((\alpha,\beta))= (\alpha, s_g\beta) $, which is in $\mathfrak{C}_{j,k}^\tau$ 
(since $\alpha s_g \beta = s_i \alpha\beta = s_i\sigma = \tau$).

Note that the pair $(\alpha, s_g\beta) \in\mathfrak{C}_{j,k}^\tau$ also leads to this second case
when we apply the map $\Psi$, so that $\Psi \circ \Phi$ is the identity when applied on 
$(\alpha,\beta)$ falling in the second case.


\smallskip

So from now on, assume that $s_g\beta \notin \mathfrak{W}_k\pp$. 
This is our last case.

\subsubsection{Third case}

We will extensively use the following fact. The proof is clear upon inspection of a few cases.

\begin{lemm}
 Let $u\in\mathfrak{S}_n$ and $a,b$ such that $s_a u = u s_b$ (so that $u(\{b,b+1\})=\{a,a+1\}$).
 Then $u$ and $s_a u$ are both in $\mathfrak{W}^+_m$ or both in $\mathfrak{W}^-_m$ for some $m$, if and only
 if:
 \begin{itemize}
  \item $2\leq b \leq n-2$, and
  \item $u(b-1)$ and $u(b+2)$ are both $<a$ or both $>a+1$.
 \end{itemize}
\end{lemm}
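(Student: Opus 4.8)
The plan is to encode membership in the sets $\mathfrak{W}^{\pm}_m$ entirely through the \emph{ascent--descent word} of $u$. For $1\le i\le n-1$ set $d_i=+$ if $u(i)<u(i+1)$ and $d_i=-$ otherwise. Then the first run of $u$ is ascending iff $d_1=+$, and $\run(u)=1+\#\{\,i: 1\le i\le n-2,\ d_i\ne d_{i+1}\,\}$, since each index with $d_i\ne d_{i+1}$ marks a peak or a valley. Consequently, the event that $u$ and $s_au$ both lie in $\mathfrak{W}^+_m$, or both in $\mathfrak{W}^-_m$, for some $m$, is equivalent to the single combinatorial condition that the two words have the same first letter $d_1$ and the same number of sign changes.

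First I would note that because $u(\{b,b+1\})=\{a,a+1\}$, left multiplication by $s_a$ (which exchanges the \emph{values} $a$ and $a+1$) amounts to transposing the two adjacent \emph{entries} of $u$ in positions $b$ and $b+1$; thus $s_au$ is obtained from $u$ simply by swapping $u(b)$ and $u(b+1)$, so only the letters $d_{b-1},d_b,d_{b+1}$ can change. The key observation is that $u(b-1)$ and $u(b+2)$, when they exist, differ from $a$ and $a+1$, hence each is either $<a$ or $>a+1$; comparing such a value against either $a$ or $a+1$ yields the same inequality, so $d_{b-1}$ and $d_{b+1}$ are \emph{unchanged} by the swap, whereas $d_b$ always flips. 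Therefore the word of $s_au$ differs from that of $u$ in exactly the single letter $d_b$.

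The boundary values of $b$ are then disposed of directly. If $b=1$ the flipped letter is $d_1$, so the first-run direction changes and no common $m$ can exist; if $b=n-1$ the flipped letter is the last one $d_{n-1}$, and flipping it necessarily toggles the sign change at index $n-2$, hence changes $\run$ by $\pm1$. Either case fails, which forces $2\le b\le n-2$. In this interior range $d_1$ is untouched, so the first-run directions always agree, and the only remaining question is whether the number of sign changes is preserved. Flipping $d_b$ affects only the two comparisons $d_{b-1}\ne d_b$ and $d_b\ne d_{b+1}$, each of which is negated; a one-line check shows their total is preserved precisely when exactly one of them held beforehand, that is, precisely when $d_{b-1}\ne d_{b+1}$.

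Finally I would translate this sign condition back into the stated inequalities: $d_{b-1}=+$ iff $u(b-1)<a$ and $d_{b-1}=-$ iff $u(b-1)>a+1$, while $d_{b+1}=-$ iff $u(b+2)<a$ and $d_{b+1}=+$ iff $u(b+2)>a+1$. Hence $d_{b-1}\ne d_{b+1}$ holds exactly when $u(b-1)$ and $u(b+2)$ are both $<a$ or both $>a+1$, which is the condition in the statement. The main obstacle is precisely this bookkeeping: verifying that among $d_{b-1},d_b,d_{b+1}$ only $d_b$ moves, and pinning down the directions of the inequalities so that the final equivalence is a genuine \emph{iff} rather than a one-sided implication. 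No deeper difficulty arises, since every comparison reduces to testing a single neighbour value against the consecutive pair $\{a,a+1\}$.
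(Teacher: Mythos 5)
Your argument is correct: the paper dispatches this lemma with the single sentence ``the proof is clear upon inspection of a few cases,'' and your descent-word bookkeeping (only $d_b$ flips among $d_{b-1},d_b,d_{b+1}$; the boundary cases $b=1$ and $b=n-1$ fail for the two distinct reasons you give; and in the interior the run count is preserved iff $d_{b-1}\neq d_{b+1}$, which translates exactly to the stated inequalities) is precisely the case inspection the authors leave to the reader, carried out completely and accurately.
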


To begin, let us apply this lemma to $\beta \in \mathfrak{W}^+_k$ and 
$s_g \beta = \beta s_h \notin \mathfrak{W}^+_k$. Since we already know $2\leq h \leq n-2$,
we obtain:
\begin{itemize}
 \item either $\beta(h-1)<g$   and $\beta(h+2)>g+1$, \hfill (A)
 \item or     $\beta(h-1)>g+1$ and $\beta(h+2)<g$.   \hfill (B)
\end{itemize}
Note that this implies $2\leq g \leq n-2$.

Next, we apply the lemma to $\alpha\in\mathfrak{W}^+_j$ and 
$s_i\alpha = \alpha s_g \notin\mathfrak{W}\pp_j$. Since we just got $2\leq g \leq n-2$, it follows:
\begin{itemize}
 \item either $\alpha(g-1)<i$ and $\alpha(g+2)>i+1$,   \hfill (C)
 \item or $\alpha(g-1)>i+1$ and $\alpha(g+2)<i$.       \hfill (D)
\end{itemize}

Next, we apply the lemma to $\sigma$ and $s_i\sigma = \sigma s_h$,
which are both in $\mathfrak{W}\pp_m$. Since $2\leq h \leq n-1$, we get 
(writing $\alpha\beta$ instead of $\sigma$):
\begin{itemize}
 \item either $\alpha(\beta(h-1))<i$ and $\alpha(\beta(h+2))<i$,  \hfill (E)
 \item or $\alpha(\beta(h-1))>i+1$ and $\alpha(\beta(h+2))>i+1$.  \hfill (F)
\end{itemize}

To define $\Phi$, we only need to distinguish between cases C or D, and E or F.
Suppose first that we are in case C-E.

\begin{lemm}
In case C-E as above, there is a unique (word) factorization $\alpha_1\dots\alpha_n = X_1X_2X_3X_4$ where the four factors are 
nonempty and such that: $X_2 = i,i+1$ or $X_2= i+1,i$, $X_3$ contains only letters $>i+1$, the first 
letter of $X_4$ is $<i$. 
\end{lemm}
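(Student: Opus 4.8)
The plan is to read the factorization off the one-line word $\alpha_1\cdots\alpha_n$, anchored at the positions $g,g+1$ carrying the values $i,i+1$. I would set $X_2:=\alpha_g\alpha_{g+1}$; since $\alpha(\{g,g+1\})=\{i,i+1\}$, this block is $i,i+1$ or $i+1,i$. Because $\alpha$ is a permutation, the values $i$ and $i+1$ each occur exactly once, so positions $g,g+1$ are the only pair of adjacent positions whose letters are $\{i,i+1\}$; hence any admissible $X_2$ must be this block, which already pins the cut points $X_1\mid X_2$ and $X_2\mid X_3$. This gives $X_1=\alpha_1\cdots\alpha_{g-1}$, nonempty since $g\geq 2$ (recall $2\leq g\leq n-2$).

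For the tail I would first note that every position $m>g+1$ has $\alpha_m\neq i,i+1$ (those values are used at $g,g+1$), so $\alpha_m<i$ or $\alpha_m>i+1$. Define $X_3$ to be the maximal run of letters $>i+1$ beginning at position $g+2$, and let $X_4$ be the remainder of the word. Condition~(C) gives $\alpha_{g+2}>i+1$, so $X_3$ contains position $g+2$ and is nonempty; by construction $X_3$ consists only of letters $>i+1$, and the first letter of $X_4$ --- being $\neq i,i+1$ and not $>i+1$ --- is $<i$, as required.

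The hard part will be showing $X_4$ is nonempty, i.e.\ that some position beyond $g+1$ carries a letter $<i$; here I would combine condition~(E) with the dichotomy~(A)/(B) for $\beta$. Both neighbours $\beta(h-1),\beta(h+2)$ exist since $2\leq h\leq n-2$, and neither equals $g$ or $g+1$ as $\beta(\{h,h+1\})=\{g,g+1\}$; by (A)/(B) exactly one of them is $<g$ and the other $>g+1$. Call $p$ the one with $p>g+1$. Condition~(E) asserts $\alpha(\beta(h-1))<i$ and $\alpha(\beta(h+2))<i$, so in particular $\alpha_p<i$. Thus position $p$ lies in the tail $\alpha_{g+2}\cdots\alpha_n$ and carries a letter $<i$, so the maximal run $X_3$ of letters $>i+1$ cannot fill the whole tail and $X_4\neq\emptyset$. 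Note this argument is uniform: whether we are in (A) or (B), one of the two neighbours supplies the needed small letter beyond $X_2$, so I never have to decide between (A) and (B).

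Finally, uniqueness follows because all cut points are forced: $X_2$ is pinned to positions $g,g+1$ as above, and the requirements that $X_3$ contain only letters $>i+1$ while $X_4$ begin with a letter $<i$ force the $X_3\mid X_4$ boundary to fall exactly at the first position $>g+1$ whose letter is $<i$ --- placing it earlier would start $X_4$ with a letter $>i+1$, and placing it later would put a letter $<i$ inside $X_3$. Hence $X_1,X_2,X_3,X_4$ are uniquely determined, which together with the nonemptiness checks completes both existence and uniqueness.
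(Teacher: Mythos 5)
Your proof is correct and follows essentially the same route as the paper's: anchor $X_2$ at the positions $g,g+1$ carrying $\{i,i+1\}$, take $X_3$ to be the maximal block of letters $>i+1$ starting at $g+2$ (nonempty by (C)), and use (A)/(B) together with (E) to produce a letter $<i$ beyond position $g+1$, forcing $X_4\neq\emptyset$. You spell out the uniqueness of the cut points more explicitly than the paper does, but the argument is the same.
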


\begin{proof}
To begin, $X_2$ is defined as the factor containing $i$ and $i+1$ (we have $\alpha(\{g,g+1\})=\{i,i+1\}$).
Since $g\geq2$, we deduce that $X_1$ is nonempty.
Since $\alpha(g+2)>i+1$, i.e.~the first letter to the right of $X_2$ 
is $>i+1$, we can define $X_3$ as the largest factor following $X_2$ and containing letters $>i+1$.

Then, we have $\alpha(\beta(h-1))<i$ and $\alpha(\beta(h+2))<i$. Since at least one of $\beta(h-1)$
and $\beta(h+2)$ is $>g+1$, it follows that there is at least one letter $<i$ to the right of $X_2$.
So $X_4$ is nonempty. 
\end{proof}

Then we define $\Phi((\alpha,\beta)) = (\alpha',\beta')$ as follows: 
\begin{itemize}
 \item $\alpha'= X_1 X_3 \overline{X_2} X_4$ where $\overline{X_2}$ is $X_2$ in reversed order. 
 \item $\beta'$ is such that $\beta'(h) = \beta(h) + |X_3|$, $\beta'(h+1) = \beta(h+1) + |X_3|$,
       if $\alpha(\beta(u))$ appears in $X_3$ then $\beta'(u) = \beta(u)-2$, 
       otherwise (i.e.~$\alpha(\beta(u))$ appears in $X_1$ or $X_4$) then $\beta'(u)=\beta(u)$.
       Here $|X_3|$ is the length of $X_3$.
\end{itemize}

\begin{lemm}
 We have $(\alpha',\beta')\in\mathfrak{C}_{j,k}^{\tau}$.
\end{lemm}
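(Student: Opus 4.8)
The plan is to verify the three defining conditions of membership in $\mathfrak{C}_{j,k}^\tau$: that $\alpha'\beta'=\tau$, that $\alpha'\in\mathfrak{W}_j^+$, and that $\beta'\in\mathfrak{W}_k^+$. For the first, I would view the passage from $\alpha$ to $\alpha'=X_1X_3\overline{X_2}X_4$ as a rearrangement of positions, writing $m=|X_3|$: there is a permutation $\pi$ with $\alpha'=\alpha\pi$, namely the one fixing the positions of $X_1$ and $X_4$, sending the $m$ positions of $X_3$ back by two, and sending the two positions of $X_2$ forward by $m$ while reversing their order. Computing $\pi^{-1}s_g$ on values then shows that the explicit recipe for $\beta'$ is exactly $\beta'=\pi^{-1}s_g\beta$: the values $g,g+1$ occupy positions $h,h+1$ and are sent up by $m$, the positions of $X_3$ go down by two, and all other positions are fixed. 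Granting this, $\alpha'\beta'=\alpha\pi\pi^{-1}s_g\beta=\alpha s_g\beta=s_i\sigma=\tau$, and since $\pi$ and $s_g$ are bijections, $\alpha'$ and $\beta'$ are genuine permutations.

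For $\alpha'\in\mathfrak{W}_j^+$, I would compare the ascent/descent words of $\alpha$ and $\alpha'$. They share the blocks $X_1$ and $X_4$ verbatim, so the words agree outside the modified middle block. Using case C ($\alpha(g-1)<i$, $\alpha(g+2)>i+1$), that $X_3$ consists of letters $>i+1$, and that $X_4$ begins with a letter $<i$, one checks that the comparison entering the modified block (last letter of $X_1$ against the first letter of the following block) is an ascent in both $\alpha$ and $\alpha'$, and the comparison leaving it (last letter of the penultimate block against the first of $X_4$) is a descent in both. It therefore suffices to compare the number of direction-changes strictly inside the block. Writing the internal ascent/descent word of $X_3$ as $w$, the two local words are $U,(X_2\text{-direction}),U,w,D$ for $\alpha$ and $U,w,D,(\overline{X_2}\text{-direction}),D$ for $\alpha'$, and a short check in each sub-case $X_2=i,i+1$ and $X_2=i+1,i$ shows both have the same number of adjacent unequal pairs. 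Hence $\run(\alpha')=\run(\alpha)=j$, and since $\alpha'$ agrees with $\alpha$ on the nonempty block $X_1$ and the head of the modified block is an ascent, the first run of $\alpha'$ is ascending.

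For $\beta'\in\mathfrak{W}_k^+$, I would observe that $\beta'=\phi\circ\beta$ where $\phi=\pi^{-1}s_g$ is a relabeling of the values, so the run structure of $\beta'$ is governed by which adjacent comparisons $\phi$ reverses. Inspecting $\phi$ — which fixes values $<g$ and $>g+1+m$, sends $\{g,g+1\}$ to $\{g+m,g+1+m\}$, and sends $\{g+2,\dots,g+1+m\}$ down by two — shows it preserves the relative order of every pair of values except those with one value in $\{g,g+1\}$ and the other among the positions $\{g+2,\dots,g+1+m\}$ of $X_3$. Since $g,g+1$ sit at positions $h,h+1$ in $\beta$, only the comparisons at $(h-1,h)$, $(h,h+1)$, $(h+1,h+2)$ could flip; the middle one does not (both values lie in $\{g,g+1\}$), and case E ($\alpha(\beta(h-1))<i$, $\alpha(\beta(h+2))<i$) forces $\beta(h-1)$ and $\beta(h+2)$ to be positions where the $\alpha$-value is $<i$, hence outside the $X_3$-positions, ruling out the other two flips. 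Therefore $\beta$ and $\beta'$ have identical ascent/descent words, so $\run(\beta')=k$ with first run ascending.

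I expect the main obstacle to be the run-count for $\alpha'$: one must set up the ascent/descent word with care and carry the two sub-cases $X_2=i,i+1$ and $X_2=i+1,i$ through the boundary junctions, since it is precisely there that cases C and E are invoked to guarantee that the block boundaries behave uniformly. By contrast, the composition identity reduces to a bookkeeping check that the stated formula for $\beta'$ equals $\pi^{-1}s_g\beta$, and the argument for $\beta'$ becomes clean once the relabeling $\phi$ is understood.
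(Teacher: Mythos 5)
Your proof is correct and follows essentially the same route as the paper's: a direct verification that $\alpha'\beta'=\tau$, that $\run(\alpha')=j$ with ascending first run, and that $\beta'$ stays in the descent class of $\beta$, all hinging on the case C--E hypotheses at the block boundaries. Your packaging of the composition identity as $\alpha'=\alpha\pi$, $\beta'=\pi^{-1}s_g\beta$, and your analysis of which adjacent comparisons the relabeling $\phi=\pi^{-1}s_g$ could flip, are clean ways of organizing the checks that the paper leaves largely implicit.
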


\begin{proof}
Since the last letter of $X_1$ (i.e.~$\alpha(g-1)$) and the first of $X_4$ are both $<i$, and $X_3$ contains 
only letters $>i+1$, we easily get that $\alpha'\in\mathcal{W}^+_j$ (knowing $\alpha\in\mathcal{W}^+_j$).

Among the two letters $\alpha(\beta(h-1))$ and $\alpha(\beta(h+2))$, one appears in $X_1$ and the other in $X_4$.
One can deduce that four letters $\beta'(h-1),\dots,\beta'(h+2)$ are in the same relative order as $\beta(h-1),\dots,\beta(h+2)$.
Then, suppose we are in the case $\beta'(u) = \beta(u)-2$, i.e.~$\alpha(\beta(u))$ appears in $X_3$.
Let $v=u\pm 1$, then $\alpha(\beta(v))$ can appear in $X_1$, $X_3$, or $X_4$. In each case, we check that
$\beta'(u)$ and $\beta'(v)$ are in the same relative order as $\beta(u)$ and $\beta(v)$. So $\beta'$ is in the 
same descent class as $\beta$ and in paticular $\beta'\in\mathfrak{W}^+_k$.

By distinguishing which of the factors $X_1,\dots,X_4$ contains $\alpha(\beta(u))$, from the definition of $\beta'$
we can check that $\alpha'(\beta'(u))= s_i ( \alpha(\beta(u)) )$. So $\alpha' \beta' = s_i \alpha\beta$.
\end{proof}

Let $g'=g+|X_3|$. We have $s_i \alpha' = \alpha' s_{g'}$ and $s_{g'} \beta' = \beta' s_h$ by definition of
$\alpha'$ and $\beta'$. So $(\alpha',\beta')$ falls in the third case when we apply $\Psi$. Moreover:
\begin{itemize}
 \item $\alpha'(g'-1)>i+1$ (since this number is the last letter of $X_3$) 
       , so $(\alpha',\beta')$ falls in case D.
 \item $\alpha'(\beta'(h-1)) = \alpha(\beta(h-1)) <i$ 
       (since $\alpha'\beta' = s_i \alpha\beta$ )
       , so $(\alpha',\beta')$ falls in case E.
\end{itemize}

So $(\alpha',\beta')$ falls in case D-E. Consequently, there is only one way to define $\Phi$
in case D-E in order to get that $\Psi\circ \Phi$ is the identity. Namely, if $(\alpha,\beta)$ falls
in case D-E, we factor $\alpha = X_1 X_2 X_3 X_4 $ where $X_3$ is $i,i+1$ or $i+1,i$, $X_2$
contains only letters $>i+1$, the last letter of $X_1$ is $<i$. Once again the factorization exists
and is unique. Then we define $\Phi((\alpha,\beta)) = (\alpha',\beta')$ as follows: 
\begin{itemize}
 \item $\alpha'= X_1 \overline{X_3} X_2 X_4$. 
 \item $\beta'$ is such that $\beta'(h) = \beta(h) - |X_2|$, $\beta'(h+1) = \beta(h+1) - |X_2|$,
       if $\alpha(\beta(u))$ appears in $X_2$ then $\beta'(u) = \beta(u)+2$, 
       otherwise (i.e.~$\alpha(\beta(u))$ appears in $X_1$ or $X_4$) then $\beta'(u)=\beta(u)$.
\end{itemize}
From the definitions, we check that $\Psi\circ\Phi$ is the identity on all $(\alpha,\beta)$ that
falls in case C-E or D-E.

It remains to define $\Phi$ and $\Psi$ in a similar manner to exchange the case C-F with the case D-F.
This is completely similar except that we exchange the conditions ``$<i$'' and ``$>i+1$''.

This ends the definition of the bijection, hence of the proof of $W_j^+ W_k^+ \in \mathcal{W}^\pm$.

\subsection{The case of the other subalgebras}

\label{subsec:othersubalg}

Knowing that $\mathcal{W}^\pm$ is an algebra, we can finish the proof of Theorem~\ref{subalg}.

\begin{prop}
 $\mathcal{W}^\circ$ is a subalgebra of $\mathcal{W}^\pm$.
\end{prop}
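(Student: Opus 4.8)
The plan is to show that $\mathcal{W}^\circ$ is closed under the internal product of $\mathcal{D}_n$ by exploiting the fact that we have already established $\mathcal{W}^\pm$ is an algebra, together with the linear relation $W_k^\circ = W_{k-1}^- + W_k^+$ connecting the two families. Since $\mathcal{W}^\circ \subseteq \mathcal{W}^\pm$, every product $W_j^\circ W_k^\circ$ automatically lies in $\mathcal{W}^\pm$, so the content is to verify that this product lies in the smaller subspace $\mathcal{W}^\circ$. The key structural observation I would isolate first is a symmetry that relates the $W_k^+$ and $W_k^-$ statistics: conjugating or multiplying by the longest element, or more naturally reversing the permutation, interchanges ascending and descending first runs while preserving the run count. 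Concretely, I expect there is an involution on $\mathfrak{S}_n$ (such as $\sigma \mapsto w_0 \sigma$ where $w_0$ is the longest element, which reverses all descents) sending $\mathfrak{W}_k^+$ to $\mathfrak{W}_k^-$ and vice versa, and I would use it to transport the closure statement for $W_j^+ W_k^+$ to all four products $W_j^{\pm} W_k^{\pm}$.

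The central step will be to pin down exactly how the four products $W_j^+ W_k^+$, $W_j^+ W_k^-$, $W_j^- W_k^+$, $W_j^- W_k^-$ decompose in the $\{W_m^+, W_m^-\}$ basis, and then to observe that the coefficients arrange themselves so that the combination $(W_{j-1}^- + W_j^+)(W_{k-1}^- + W_k^+)$ produces only terms of the form $W_{m-1}^- + W_m^+ = W_m^\circ$. I anticipate this is governed by a compatibility between the multiplicative structure and the ``add an initial $0$'' operation underlying the left-run statistic $\run^\circ$. The cleanest route is to find a semigroup-type or filtration argument: because $\run^\circ(\sigma) = \run(\sigma)$ when the first run ascends and $\run(\sigma)+1$ when it descends, the quantity $\run^\circ$ is invariant under a natural right action, and this invariance should force the products of $W^\circ$-elements back into $\mathcal{W}^\circ$.

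The main obstacle I expect is establishing the precise bookkeeping of indices, i.e.~verifying that the $-1$ shift in the descending part matches up correctly across a product so that the off-by-one contributions cancel and assemble into genuine $W_m^\circ$ terms rather than leaking into independent $W_m^+$ or $W_m^-$ directions. To handle this I would argue at the level of the defining sets $\mathfrak{W}_m^\circ = \mathfrak{W}_m^+ \cup \mathfrak{W}_{m-1}^-$, showing that for any $\sigma$ the coefficient of $\sigma$ in a product of two $W^\circ$ elements depends only on $\run^\circ(\sigma)$, using the already-proven fact that coefficients in products of $\mathcal{W}^\pm$-elements depend only on $(\run(\sigma), \text{first run direction})$. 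The payoff is that the data $(\run, \text{direction})$ refines to $\run^\circ$ in a way compatible with the products, and I would make this precise by checking that whenever $\sigma \in \mathfrak{W}_m^+$ and $\tau \in \mathfrak{W}_{m-1}^-$ (so both have $\run^\circ = m$), the corresponding product coefficients agree. Granting the $\mathcal{W}^\pm$ result and the ascending/descending symmetry, this final comparison is the crux, and everything else reduces to linear algebra in the $W^\pm$ basis.
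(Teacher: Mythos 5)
You have correctly identified the reduction the paper uses: since $\mathcal{W}^\pm$ is already known to be an algebra, the coefficient of $\sigma$ in $W_j^\circ W_k^\circ$ depends only on which set $\mathfrak{W}_m^+$ or $\mathfrak{W}_m^-$ contains $\sigma$, and the entire content of the proposition is the single identity that this coefficient agrees on $\mathfrak{W}_m^+$ and on $\mathfrak{W}_{m-1}^-$, the two pieces of $\mathfrak{W}_m^\circ$. The gap is that you label this identity ``the crux'' and then never prove it, and the mechanisms you propose cannot deliver it. The $\omega$-symmetries ($\omega W_k^+ = W_k^-$, and $W_k^+\omega$ equal to $W_k^+$ or $W_k^-$ according to the parity of $k$) only ever relate coefficients of $W_m^+$ and $W_m^-$ at the \emph{same} index $m$ --- they permute the two directions within each run number --- whereas the needed identity compares index $m$ with index $m-1$. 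No amount of linear algebra in the $W^\pm$ basis combined with these symmetries produces that shift. Nor is there a ``natural right action'' preserving $\run^\circ$ on all of $\mathfrak{S}_n$: for instance $\run^\circ(123)=1$ but $\run^\circ(123\cdot s_1)=\run^\circ(213)=3$.

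The paper closes exactly this gap with a short dedicated bijection, which is the one genuinely new idea in its proof. Choose $\sigma\in\mathfrak{W}_m^+$ with $\sigma_1=n-1$ and $\sigma_2=n$, so that $\tau:=s_{n-1}\sigma=\sigma s_1$ lies in $\mathfrak{W}_{m-1}^-$. Then biject factorizations $(\alpha,\beta)\in\mathfrak{W}_j^\circ\times\mathfrak{W}_k^\circ$ of $\sigma$ with those of $\tau$ by replacing $\alpha$ with $s_{n-1}\alpha$ when $s_{n-1}\alpha$ remains in $\mathfrak{W}_j^\circ$, and otherwise (in which case $s_{n-1}\alpha=\alpha s_{n-1}$) replacing $\beta$ with $s_{n-1}\beta$. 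The point is that one of the two cases always applies: if both failed, then $s_{n-1}\alpha=\alpha s_{n-1}$ and $s_{n-1}\beta=\beta s_{n-1}$ would force $s_{n-1}\sigma=\sigma s_{n-1}$, contradicting $s_{n-1}\sigma=\sigma s_1$. You need this argument, or an equivalent replacement for your unproven ``final comparison,'' before your proof is complete; everything else in your outline is consistent with the paper.
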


\begin{proof}
For each permutation $\sigma$, we define:
\[
  \mathfrak{D}^{\sigma}_{j,k} = \{ (\alpha,\beta) \in \mathfrak{W}^\circ_j \times \mathfrak{W}^\circ_k \,: \, \alpha\beta = \sigma \},
\]
and $d^{\sigma}_{j,k} = \# \mathfrak{D}^{\sigma}_{j,k}$. If $\sigma\in\mathfrak{W}_m^+$ or
$\sigma\in\mathfrak{W}_m^-$, then $d^{\sigma}_{j,k}$ is the coefficient of $W_m^+$ or $W_m^-$, respectively, in 
$W_j^\circ W_k^\circ $.
We will show that for each $m$ such that $2\leq m \leq n-1$, we have $d^{\sigma}_{j,k} = d^{\tau}_{j,k} $ 
for some well-chosen $\sigma\in\mathfrak{W}^+_{m}$ and $\tau\in\mathfrak{W}^-_{m-1}$. Indeed
this implies $ W_j^\circ W_k^\circ \in \mathcal{W}^\circ$.

It is easy to find an element $\sigma \in \mathfrak{W}_m^+$ such that $\sigma_1 = n-1$ and $\sigma_2 = n$.
We deduce that $\tau = s_{n-1}\sigma = \sigma s_1$ is in $\mathfrak{W}_{m-1}^-$.
Then, it remains only to find a bijection beween $\mathfrak{D}^{\sigma}_{j,k}$ and $\mathfrak{D}^{\tau}_{j,k}$.

This bijection $\Delta$ is defined as follows. 
Let $(\alpha,\beta) \in \mathfrak{D}^{\sigma}_{j,k}$.
In the first case, if $s_{n-1}\alpha \in \mathfrak{W}^\circ_j$, 
we set $\Delta(\alpha,\beta) = (s_{n-1}\alpha,\beta) $. 

Otherwise, it easily follows $s_{n-1}\alpha = \alpha s_{n-1}$. In the second case, if
$s_{n-1}\beta \in \mathfrak{W}^\circ_k $,
we set $\Delta(\alpha,\beta) = (\alpha,s_{n-1}\beta)$.

There is in fact no other case to consider: if we are not in the first case, we have 
necessarily $s_{n-1}\beta \in \mathfrak{W}^\circ_k $. Otherwise, 
we would have $s_{n-1}\beta = \beta s_{n-1}$, and it would follow that $s_{n-1}\sigma = \sigma s_{n-1}$ (this is a contradiction to 
$s_{n-1}\sigma=\sigma s_1$).
\end{proof}

As for the other subalgebras, we don't need to do a similar bijection, as it is more
convenient to use some symmetry properties. Let $\omega := W_1\mm = n\dots 21$.
We clearly have $\run(\omega \sigma ) = \run( \sigma \omega ) = \run( \sigma ) $ for
any $\sigma \in\mathfrak{S}_n$. 
Moreover, $1\in\Des(\sigma)$ iff $1\notin \Des(\omega\sigma)$, so that:
\[
   \omega W_k\pp = W_k\mm, \qquad \omega W_k\mm = W_k\pp.
\]

Note that if $\sigma\in\mathfrak{S}_n$ has an odd 
number of alternating runs, the first and last runs are of similar nature
(both ascending or both descending), but if $\sigma$ has an even number of runs, 
they are of different nature (one is ascending, the other descending).
It is then easy to read the alternating runs of $\sigma\omega = \sigma_n \dots \sigma_1$
from those of $\sigma = \sigma_1 \dots \sigma_n$. It follows:
\begin{equation}   \label{relwW}
   W_k\pp \omega = \begin{cases} W_k\mm \text{ if } k \text{ is odd}, \\
                                 W_k\pp \text{ if } k \text{ is even},
                   \end{cases}
 \qquad 
   W_k\mm \omega = \begin{cases} W_k\pp \text{ if } k \text{ is odd}, \\
                                 W_k\mm \text{ if } k \text{ is even},
                   \end{cases}
\end{equation}

Note that this implies
\begin{align*} 
  W_j\mm W_k\pp &= \omega W_j\pp W_k\pp\\
  W_j\pp W_k\mm &= \begin{cases}
                      W_j\pp W_k\pp & \text{ if } j \text{ is even,} \\
                      \omega W_j\pp W_k\pp & \text{ if } j \text{ is odd,}
                   \end{cases} \\
  W_j\mm W_k\mm &= \begin{cases}
                      W_j\pp W_k\pp & \text{ if } j \text{ is odd,} \\
                      \omega W_j\pp W_k\pp & \text{ if } j \text{ is even.}
                   \end{cases}
\end{align*}
As a consequence, to show that $\mathcal{W}^\pm$ is an algebra
it suffices to show that $W\pp_j W\pp_k \in \mathcal{W}^\pm $, as was mentioned earlier.

%
%

\begin{prop}
 $\mathcal{W}$ is a subalgebra of $\mathcal{W}^\pm$.
\end{prop}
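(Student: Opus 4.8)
The plan is to leverage the fact, just established, that $\mathcal{W}^\pm$ is an algebra. Since $W_j = W_j^+ + W_j^-$ and $W_k = W_k^+ + W_k^-$ both lie in $\mathcal{W}^\pm$, the product $W_j W_k$ lies in $\mathcal{W}^\pm$ as well, so it expands uniquely as
\[
  W_j W_k = \sum_m \left( a_m W_m^+ + b_m W_m^- \right)
\]
for some scalars $a_m, b_m$. To conclude that $W_j W_k \in \mathcal{W} = \Vect(W_m)$, it suffices to prove that $a_m = b_m$ for every $m$, for then $W_j W_k = \sum_m a_m W_m$.

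The key is the involution furnished by $\omega = W_1^-$. First I would record, from the symmetry relations already derived, that left multiplication by $\omega$ interchanges the two families, $\omega W_k^+ = W_k^-$ and $\omega W_k^- = W_k^+$, and therefore fixes each unsigned generator:
\[
  \omega W_m = \omega \left( W_m^+ + W_m^- \right) = W_m^- + W_m^+ = W_m.
\]
In other words, inside each graded component the $W_m$ span precisely the subspace of elements invariant under the map $x \mapsto \omega x$.

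From here the argument is immediate. On one hand, $\omega W_j = W_j$ gives
\[
  \omega \left( W_j W_k \right) = \left( \omega W_j \right) W_k = W_j W_k.
\]
On the other hand, applying $\omega$ to the expansion above and using that it swaps $W_m^+ \leftrightarrow W_m^-$ yields $\omega(W_j W_k) = \sum_m \left( a_m W_m^- + b_m W_m^+ \right)$. Equating the two expressions and invoking the linear independence of the $W_m^+, W_m^-$ (they are sums over disjoint nonempty sets of permutations) forces $a_m = b_m$ for all $m$, which is exactly what we wanted.

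The point to emphasise is that, once the relations for $\omega$ are in place, there is no combinatorial obstacle at all: no new bijection is required, in contrast to the proof for $\mathcal{W}^\pm$ itself. The entire content is the observation that $\mathcal{W}$ is exactly the fixed space of the (left) involution $x \mapsto \omega x$ inside the algebra $\mathcal{W}^\pm$, and that this fixed space is automatically closed under the internal product: $W_j W_k$ is left-$\omega$-invariant because $W_j$ is, and left-$\omega$-invariance characterises membership in $\mathcal{W}$. The only care needed is to have both $\omega W_k^\pm = W_k^\mp$ and the linear independence of the generators at hand, both of which are already available.
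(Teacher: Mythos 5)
Your proof is correct and rests on the same mechanism as the paper's: the left action of $\omega = W_1\mm$, which swaps $W_k\pp \leftrightarrow W_k\mm$ and hence fixes each $W_k$. The paper packages this as the observation that $\mathcal{W} = W_1\mathcal{W}^\pm$ is a right ideal of $\mathcal{W}^\pm$ (here $W_1 = W_1\pp + \omega$, and $\tfrac{1}{2}W_1$ is exactly the projector onto your fixed subspace), which is a more compressed form of your fixed-space argument.
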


\begin{proof}
From the elementary properties of $\omega = W_1\mm$ (and $W_1^+$, the unit element), 
we have $W_1 W_k\pp = W_1 W_k\mm = W_k $.
It follows that $\mathcal{W}$ is the right ideal $W_1 \mathcal{W}^\pm$ and {\it a fortiori}
a subalgebra.
\end{proof}

\begin{rema}
 The previous proposition should be understood in the sense of nonunital algebras, since
 $\mathcal{W}$ does not contain contain the unit of $\mathcal{W}^\pm$. 
 Still, $\mathcal{W}$ has a unit element as an abstract algebra, namely $\frac{1}{2} W_1$
 (at the condition of allowing rational coefficients).
\end{rema}

\begin{prop}
 $\mathcal{C}$ is a subalgebra of $\mathcal{W}^\pm$.
\end{prop}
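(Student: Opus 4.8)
The plan is to realize $\mathcal{C}$ as the fixed-point subspace of an algebra automorphism of $\mathcal{W}^\pm$, since the fixed-point set of any algebra automorphism is automatically a subalgebra. The automorphism I would use is conjugation by the longest element $\omega = W_1^-$. Because $\omega = n\dots 21$ is an involution in $\mathfrak{S}_n$, we have $\omega^2 = W_1^+$, the identity, so the map $c\colon x\mapsto \omega x\omega$ satisfies $c(xy)=(\omega x\omega)(\omega y\omega)=c(x)c(y)$ and $c^2=\mathrm{id}$; thus $c$ is an involutive algebra automorphism of $\mathbb{Q}[\mathfrak{S}_n]$. Since $\omega\in\mathcal{W}^\pm$ and $\mathcal{W}^\pm$ is already known to be an algebra by Theorem~\ref{subalg}, both $\omega\,\mathcal{W}^\pm$ and $\mathcal{W}^\pm\omega$ lie in $\mathcal{W}^\pm$, so $c$ restricts to an automorphism of $\mathcal{W}^\pm$.

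First I would compute the action of $c$ on the spanning elements $W_k^+,W_k^-$. Combining $\omega W_k^+=W_k^-$ and $\omega W_k^-=W_k^+$ with the right-multiplication relations~(\ref{relwW}), a two-line check gives $\omega W_k^\pm\omega = W_k^\pm$ when $k$ is \emph{odd}, and $\omega W_k^\pm\omega = W_k^\mp$ when $k$ is \emph{even}. In words, $c$ fixes each of $W_k^+,W_k^-$ for odd $k$ and interchanges them for even $k$. Consequently the fixed vectors among the spanning set are exactly $W_k^+,W_k^-$ for odd $k$ together with $W_k=W_k^++W_k^-$ for even $k$, which are precisely the generators in the definition of $\mathcal{C}$. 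Hence $\mathcal{C}$ is contained in the $+1$-eigenspace of $c$.

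To conclude I would confirm that $\mathcal{C}$ equals this eigenspace, not merely a subspace of it, by a dimension count. As $c$ is an involution, the odd-$k$ pairs each contribute two $+1$-eigenvectors while each even-$k$ block $\langle W_k^+,W_k^-\rangle$ contributes one $+1$ eigenvalue (on $W_k^++W_k^-$) and one $-1$ eigenvalue (on $W_k^+-W_k^-$). This gives $\dim\ker(c-\mathrm{id}) = 2\lfloor n/2\rfloor + \lfloor (n-1)/2\rfloor = \lfloor (3n-2)/2\rfloor = \dim\mathcal{C}$, so the inclusion is an equality and $\mathcal{C}$ is a subalgebra. I expect the genuine content to be the single observation that conjugation by $\omega$ has $\mathcal{C}$ as its $+1$-eigenspace; once that is seen, the rest is the straightforward relation check and eigenvalue bookkeeping, and the only point requiring slight care is tracking parities correctly in the two cases of the computation $\omega W_k^\pm\omega$.
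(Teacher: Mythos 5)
Your proposal is correct and follows essentially the same route as the paper: both identify $\mathcal{C}$ as the fixed-point space of the involutive conjugation $x\mapsto\omega x\omega$, verify via Equation~\eqref{relwW} that this map fixes $W_k^{\pm}$ for odd $k$ and swaps them for even $k$, and deduce closure under product from $\omega xy\omega=(\omega x\omega)(\omega y\omega)$. Your explicit eigenvalue/dimension count just spells out what the paper compresses into the phrase ``using that $x\mapsto\omega x\omega$ is a linear symmetry.''
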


\begin{proof}
From the elementary properties of $\omega = W_1\mm$, we have:
\begin{align*}
  \omega W_k\pp \omega = \begin{cases} W_k\pp \text{ if } k \text{ is odd}, \\
                                       W_k\mm \text{ if } k \text{ is even}, 
                         \end{cases}
  \qquad 
  \omega W_k\mm \omega = \begin{cases} W_k\mm \text{ if } k \text{ is odd},  \\
                                       W_k\pp \text{ if } k \text{ is even}.
                         \end{cases}
\end{align*}
Using that $x \mapsto \omega x \omega$ is a linear symmetry, it follows that
\begin{align*}
   \{ x\in\mathcal{W}^\pm \,:\,  \omega x \omega =  x  \} 
       = \Vect (W^+_{2k+1}, W^-_{2k+1}, W_{2k}) = \mathcal{C}.
\end{align*}
If $x,y \in \mathcal{C}$, we have $\omega xy \omega = \omega x \omega ^2 y \omega =xy $,
so $\mathcal{C}$ is closed under product.
\end{proof}

We turn to the case of peak algebras.

\begin{lemm} We have:
\begin{align}  \label{relPW}
   P_k       &= W^+_{2k} + W^-_{2k+1} + W^+_{2k+1} + W^-_{2k+2}, \\    \label{relPW2}
   P_k^\circ &= W^-_{2k-1} + W^+_{2k} + W^-_{2k} + W^+_{2k+1}.
\end{align}
\end{lemm}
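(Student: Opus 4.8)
The plan is to translate both identities into purely combinatorial statements about individual permutations, namely a description of the fibres of $\pk$ and $\pk^\circ$ in terms of $\run$ and the direction of the first run. Since each right-hand side is a sum of $W$-elements, each of which is $\sum_\sigma \sigma$ over a set cut out by $\run$-value and first-run direction, while each $P_k$ (resp.\ $P_k^\circ$) is $\sum_\sigma \sigma$ over a fibre of $\pk$ (resp.\ $\pk^\circ$), the two identities reduce to set-partition statements that can be checked permutation by permutation.

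The key observation is that the boundaries between consecutive alternating runs are exactly the interior local extrema of $\sigma$, i.e.\ the peaks and valleys; hence a permutation with $\run(\sigma)=r$ has exactly $r-1$ such extrema. Moreover, because consecutive runs alternate between ascending and descending, these extrema alternate between peaks and valleys along $\sigma$. If the first run is ascending, the first boundary is a peak, so peaks occupy the odd-numbered boundaries and $\pk(\sigma)=\lceil(r-1)/2\rceil$; if the first run is descending, valleys come first and $\pk(\sigma)=\lfloor(r-1)/2\rfloor$. I would record this in a short table: for an ascending first run, both $r=2k$ and $r=2k+1$ give $\pk=k$; for a descending first run, both $r=2k+1$ and $r=2k+2$ give $\pk=k$. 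This says precisely that $\{\sigma : \pk(\sigma)=k\}$ is the disjoint union $\mathfrak{W}^+_{2k}\cup\mathfrak{W}^+_{2k+1}\cup\mathfrak{W}^-_{2k+1}\cup\mathfrak{W}^-_{2k+2}$, which upon summing over $\sigma$ is exactly (\ref{relPW}).

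For the left-peak identity I would exploit the fact, reflected in the remark after Definition~\ref{leftrun}, that prepending $\sigma_0=0$ makes the first left run always ascending (since $0<\sigma_1$). Thus the same counting applies verbatim with $\run$ replaced by $\run^\circ$ and ``first run ascending'' always in force, yielding $\pk^\circ(\sigma)=k$ if and only if $\run^\circ(\sigma)\in\{2k,2k+1\}$; equivalently $\{\sigma : \pk^\circ(\sigma)=k\}=\mathfrak{W}^\circ_{2k}\cup\mathfrak{W}^\circ_{2k+1}$. Substituting $\mathfrak{W}^\circ_j=\mathfrak{W}^+_j\cup\mathfrak{W}^-_{j-1}$ rewrites this union as $\mathfrak{W}^-_{2k-1}\cup\mathfrak{W}^+_{2k}\cup\mathfrak{W}^-_{2k}\cup\mathfrak{W}^+_{2k+1}$, and summing over $\sigma$ gives (\ref{relPW2}). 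Alternatively one may argue directly that $\pk^\circ=\pk+[\sigma_1>\sigma_2]$, since the only left peak that is not an ordinary peak is the one at position $1$, present exactly when the first run is descending.

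The only delicate point is the floor/ceiling and parity bookkeeping, together with the boundary conventions $\mathfrak{W}^+_n=\mathfrak{W}^-_0=\emptyset$ and the analogous empty sets at the edges of the admissible range of $k$. I would check these edge cases explicitly so that the four-term unions on the right-hand sides never acquire spurious nonempty pieces; once the table is verified the identities follow immediately.
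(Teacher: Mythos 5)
Your proof is correct and follows essentially the same approach as the paper: both arguments rest on the observation that peaks are exactly the boundaries between an ascending run and the following descending run, so that $\pk(\sigma)$ is determined by $\run(\sigma)$ together with the direction of the first run, and the four terms arise from the two parities times the two first-run directions (with the left-peak case reduced to the ordinary case by prepending $0$). The paper merely phrases the correspondence in the opposite direction, reconstructing the possible run structures from a fixed peak count rather than computing $\pk$ from the run data.
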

\begin{proof}
For the first equality, consider a permutation $\sigma$ with $\pk(\sigma)=k$. It has at least $2k$ runs,
since each peak follows an ascending run, and precedes a descending run. 
Besides these $2k$ runs, whose first is ascending, there is possibly an additional descending run
at the beginning, and possibly an additional ascending run at the end. 
The four possibilities give the four terms in the sum.

As for the second equality, consider a permutation  $\pk^\circ(\sigma)=k$. It has at least $2k-1$ runs,
since now the first peak might not follow an ascending run (if it is equal to 1).
Besides these $2k-1$ runs, whose first is descending, there is possibly an additional ascending run
at the beginning, and possibly an additional ascending run at the end.
Once again, the four possibilities give the four terms in the sum.
\end{proof}

\begin{prop}
$\mathcal{P}$ and $\mathcal{P}^\circ$ are subalgebras of $\mathcal{W}^\pm$.
\end{prop}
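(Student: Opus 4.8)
The plan is to read off both properties required of $\mathcal{P}$ and $\mathcal{P}^\circ$ directly from the lemma just proved, together with the facts recalled in Subsection~\ref{peakseulerian}. To say that $\mathcal{P}$ (respectively $\mathcal{P}^\circ$) is a subalgebra of $\mathcal{W}^\pm$ amounts to two things: that it is a linear subspace of $\mathcal{W}^\pm$, and that it is closed under the internal product. The first will come from the relations (\ref{relPW}) and (\ref{relPW2}); the second is already established in the literature.

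First I would record the containment. Relations (\ref{relPW}) and (\ref{relPW2}) express each $P_k$ and each $P_k^\circ$ as an integer linear combination of the spanning elements $W^+_j,W^-_j$ of $\mathcal{W}^\pm$. Since $\mathcal{P}=\Vect(P_k)$ and $\mathcal{P}^\circ=\Vect(P_k^\circ)$, it follows at once that $\mathcal{P}\subseteq\mathcal{W}^\pm$ and $\mathcal{P}^\circ\subseteq\mathcal{W}^\pm$ as subspaces.

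Next I would invoke closure under multiplication. The two Propositions recalled in Subsection~\ref{peakseulerian} (due to Schocker, and to Aguiar, Bergeron and Nyman) state precisely that $\mathcal{P}=\mathcal{P}_n$ and $\mathcal{P}^\circ=\mathcal{P}_n^\circ$ are subalgebras of $\mathcal{D}_n$, hence closed under the internal product. Because $\mathcal{W}^\pm\subseteq\mathcal{D}_n$ and the product of $\mathcal{W}^\pm$ is nothing but the internal product of $\mathcal{D}_n$ restricted to $\mathcal{W}^\pm$, any subspace of $\mathcal{W}^\pm$ that is closed under the $\mathcal{D}_n$-product is automatically a subalgebra of $\mathcal{W}^\pm$. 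Combining this with the containment above finishes the proof.

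The point to emphasise is that essentially no new work is needed here: all the difficulty has already been absorbed into the preceding lemma (which realises the peak generators inside $\mathcal{W}^\pm$) and into the earlier part of Theorem~\ref{subalg} (which makes $\mathcal{W}^\pm$ an algebra in the first place). The only conceivable obstacle would arise if one insisted on proving closure from scratch, without citing the known subalgebra property; then, in contrast to the case of $\mathcal{C}$, one could not simply present $\mathcal{P}$ or $\mathcal{P}^\circ$ as a fixed subspace under the symmetries $x\mapsto\omega x$, $x\mapsto x\omega$ or $x\mapsto\omega x\omega$. Indeed a short computation with (\ref{relwW}) gives $P_k\,\omega=P_k$ but $\omega\,P_k\,\omega\neq P_k$, so the peak generators are not cut out by these involutions the way $\mathcal{C}$ was, and a direct argument would require genuinely more care. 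Citing the established result is therefore the natural and economical route.
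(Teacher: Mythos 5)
Your proposal is logically correct, but it takes a genuinely different route from the paper. You establish containment exactly as the paper does, via the lemma expressing $P_k$ and $P_k^\circ$ in terms of the $W_j^+$, $W_j^-$; but for closure under the product you simply cite the known results of Schocker and of Aguiar--Bergeron--Nyman that $\mathcal{P}_n$ and $\mathcal{P}_n^\circ$ are subalgebras of $\mathcal{D}_n$. The paper instead proves closure from scratch: it shows $\mathcal{P}^\circ = \mathcal{W}^\circ \cap \mathcal{C}$, an intersection of two subalgebras already established by the bijective machinery, and it shows $\omega\mathcal{P}\omega = \{x \in \mathcal{W}^\circ : x\omega = x\}$, which is visibly closed under product (and conjugation by $\omega$ is an algebra automorphism, so closure of $\omega\mathcal{P}\omega$ gives closure of $\mathcal{P}$). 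The difference matters for the stated purpose of Section~\ref{sec:bij}: the introduction advertises that once the existence of the $2n-2$ dimensional algebra $\mathcal{W}^\pm$ is known, ``the existence of the various subalgebras easily follows'' --- i.e.\ the bijective construction is meant to yield an \emph{independent} proof that the Eulerian peak algebras are algebras, which your argument forgoes by importing that very fact. What your approach buys is brevity and no risk of error; what the paper's approach buys is self-containment and a new proof of the peak-algebra results. Your closing observation that $P_k\omega = P_k$ while $\omega P_k\omega \neq P_k$ is correct, and correctly identifies why $\mathcal{P}$ is not cut out by a single one of the symmetries the way $\mathcal{C}$ is; but note that the paper circumvents this precisely by intersecting with $\mathcal{W}^\circ$ and by passing to the conjugate $\omega\mathcal{P}\omega$, so the ``direct argument requiring more care'' is in fact carried out there.
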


\begin{proof}
The previous lemma shows that $\mathcal{P}$ and $\mathcal{P}^\circ$ are subspaces 
of $\mathcal{W}^\pm$, so it remains to show that they are closed under product.

We begin with the case of $\mathcal{P}^\circ$. From the previous lemma, we have
$P^\circ_k =  W^\circ_{2k} + W^\circ_{2k+1}$. Now let $x\in\mathcal{W}^\circ$, then
it is in $\mathcal{P}^\circ$ iff the coefficients of $W_{2k}^\circ$ and $W_{2k+1}^\circ$
are the same for all $k$. This is also equivalent to the equality of the coefficients
of $W_{2k}\pp$ and $W_{2k}\mm$ in $x$, i.e.~to the condition $x \in \mathcal{C}$.
So we have $\mathcal{P}^\circ = \mathcal{W}^\circ \cap \mathcal{C}$, and this is a subalgebra
as an intersection of subalgebras.

In the case of $\mathcal{P}$, we first note that it is closed under product iff  
$\omega \mathcal{P} \omega$ is also closed under product. We have:
\[
   \omega P_k \omega = W^-_{2k} + W^-_{2k+1} + W^+_{2k+1} + W^+_{2k+2} 
      = W^\circ_{2k+1} + W^\circ_{2k+2},
\]
so $\omega \mathcal{P} \omega \subseteq \mathcal{W}^\circ $.
Now let $x\in\mathcal{W}^\circ$, then it is in $\omega \mathcal{P} \omega$ iff the 
coefficients of $W_{2k+1}^\circ$ and $W_{2k+2}^\circ$ are the same for all $k$. 
This is also equivalent to the equality of the coefficients
of $W_{2k+1}\pp$ and $W_{2k+1}\mm$ in $x$, and also to the condition $x\omega=x$.
So we have $ \omega \mathcal{P} \omega = \{ x \in \mathcal{W}^\circ \, : \, x\omega = x\}$, 
which is clearly closed under product.
\end{proof}

\section{Algebraic proof for the existence of run algebras}
\label{sec:algproof}

This section gives a second proof, using noncommutative symmetric
functions, that grouping permutations by the number of runs gives
an algebra. This will require working in all degrees $n$ at the same
time, so from now on add a second index to all previous quantities
indicating the degree, if these quantities involve one degree only.
For example, we now write $\mathcal{W}_{n}$ for the commutative run
algebra in degree $n$, and $W_{k,n}$ for its basis.

First define the following elements in $\widehat{Sym}$: 
\[
  V_{1}\pp:=\sum_{i\geq0}R_{i}=\sum_{n\geq 0}W_{1,n}\pp,\qquad V_{1}\mm:=\sum_{i\geq0}R_{(1^{i})}=\sum_{n\geq 0}W_{1,n}\mm.
\]
The intuition is that $V_{1}\pp$ is an ascending run and $V_{1}\mm$
is a descending run. 
To obtain multiple runs, one concatenates ascending runs and descending
runs alternately. Since external product roughly expresses concatenation,
it is no surprise that we define 
\begin{align*}
V_{2k}\pp & :=(V_{1}\pp\star V_{1}\mm)^{\star k}, & V_{2k}\mm & :=(V_{1}\mm\star V_{1}\pp)^{\star k},\\
V_{2k+1}\pp & :=V_{1}\pp\star(V_{1}\mm\star V_{1}\pp)^{\star k},\qquad & V_{2k+1}\mm & :=V_{1}\mm\star(V_{1}\pp\star V_{1}\mm)^{\star k}, 
\end{align*}
and
\begin{align*}
V_{k} & :=V_{k}\pp+V_{k}\mm.
\end{align*}
Write $V_{k,n}\pp$ for the degree $n$ part of $V_{k}\pp$, and similarly
for $V_{k,n}\mm,V_{k,n}$, so 
\[
V_{k}\pp=\sum_{n\geq0}V_{k,n}\pp,\qquad V_{k}\mm=\sum_{n\geq0}V_{k,n}\mm,\qquad V_{k}=\sum_{n\geq0}V_{k,n}.
\]
Note that $V_{k,n}\pp,V_{k,n}\mm$ are non-zero even when $k>n$. 

Recall from Section~\ref{nsym} that $V_{1}\pp,V_{1}\mm$ are grouplike. Being
a product of grouplike elements, $V_{k}\pp,V_{k}\mm$ are also grouplike.
Thus the simplified splitting formula in Section~\ref{nsym} holds, and this
allows a straightforward proof that the $V$s are ``multiplicative''
(under the internal product) in the following way:
\begin{prop} \label{propmulV}
We have:
\[
V_{k}\pp V_{l}\pp=V_{kl}\pp,\qquad V_{k}\mm V_{l}\pp = V_{kl}\mm.
\]
If $k$ is even, then
\[
  V_{k}\pp V_{l}\mm = V_{kl}\pp, \qquad V_{k}\mm V_{l}\mm =V_{kl}\mm.
\]
If $k$ is odd, then
\[
  V_{k}\pp V_{l}\mm = V_{kl}\mm, \qquad V_{k}\mm V_{l}\mm = V_{kl}\pp.
\]
\end{prop}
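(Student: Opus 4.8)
**

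The plan is to prove Proposition~\ref{propmulV} by exploiting the grouplike property of the $V_k^\pm$ together with the simplified splitting formula, reducing everything to the single identities $V_1^+ x = x$ and $V_1^- x = \omega x$ (in each homogeneous degree), where $x$ is any homogeneous symmetric function and the internal product is taken in $\Sym_n = \mathcal{D}_n$. The key observation is that $V_1^+ = \sum_n W_{1,n}^+$ consists of the identity permutations (one per degree), so it acts as the identity for the internal product; similarly $V_1^- = \sum_n W_{1,n}^-$ consists of the longest elements $\omega_n = n \cdots 21$, so left-multiplication by $V_1^-$ implements the order-reversing symmetry $\sigma \mapsto \omega_n \sigma$ already used in Subsection~\ref{subsec:othersubalg}. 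These two facts are the entire combinatorial input; the rest is formal manipulation.

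\medskip

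First I would record these two base cases precisely: in each degree $n$, the internal product satisfies $V_1^+ V_\ell^\pm = V_\ell^\pm$ and $V_1^- V_\ell^\pm = V_{\ell}^\mp$ or $V_\ell^\pm$ according to parity, matching the action of $\omega$ on the $W_k^\pm$ described by the relation $\omega W_k^+ = W_k^-$ (and its companions). Concretely, $V_1^-V_\ell^\pm$ reverses the first run, so it swaps $+$ and $-$ superscripts; this is exactly the content of the second and third lines of the proposition when $k=1$. Next, the crucial step is the simplified splitting formula: since $V_k^+$ is grouplike, for the external product $V_\ell^\pm = F_1 \star \cdots \star F_r$ (a product of $V_1^+$'s and $V_1^-$'s in alternating order) we have
\[
  V_k^+ V_\ell^\pm = V_k^+ (F_1 \star \cdots \star F_r) = (V_k^+ F_1) \star \cdots \star (V_k^+ F_r).
\]
Thus the internal product distributes over the external factors, and I only need to know how $V_k^+$ acts on a single run $V_1^+$ or $V_1^-$.

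\medskip

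This reduces the whole proposition to computing $V_k^\pm V_1^\pm$, i.e.\ the action of a $k$-run element on a single ascending or descending run. Here $V_k^+ V_1^+ = V_k^+$ and $V_k^+ V_1^- = V_k^\mp$ (with the sign flip governed by the parity of $k$, since reversing the single trailing run either preserves or reverses the global alternation pattern depending on whether $k$ is even or odd). Substituting these back into the split product, the external factors recombine: an alternating external product of copies of $V_k^+$ is again, by the definitions of $V_{2k}^\pm, V_{2k+1}^\pm$, of the form $V_{km}^\pm$ for the appropriate $m = \ell$, with the superscript determined by tracking the parity flips across the $\ell$ factors. Carrying out this bookkeeping yields $V_k^+ V_\ell^+ = V_{k\ell}^+$ and the remaining cases. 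The relations involving $V_k^-$ then follow immediately by writing $V_k^- = V_1^- V_k^+$ (from the base case) and using associativity of the internal product together with $V_1^- V_{k\ell}^+ = V_{k\ell}^\mp$.

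\medskip

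The main obstacle I anticipate is purely the parity bookkeeping: correctly tracking how the superscript $\pm$ propagates when $V_k^\pm$ acts on each external factor and how these signs accumulate across the $\ell$ alternating factors, so that the final superscript matches the stated cases (note the asymmetry that $V_k^+ V_\ell^+ = V_{k\ell}^+$ holds unconditionally while the mixed products $V_k^\pm V_\ell^\mp$ depend on the parity of $k$ but not of $\ell$). I would organize this by first handling $k$ even (where $V_k^+$ is symmetric under reversing its last run, so no sign flip occurs and every case collapses to the $+$ superscript) and then $k$ odd (where each flip genuinely toggles the sign), checking the four statement lines against the definitions of $V_{2k}^\pm$ and $V_{2k+1}^\pm$ directly. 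Everything else is a routine consequence of grouplikeness and the two base-case identities.
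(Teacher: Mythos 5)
Your overall strategy coincides with the paper's: use grouplikeness of $V_k^+$, $V_k^-$ and the simplified splitting formula to distribute the internal product over the external factorization of $V_l^\pm$ into single runs, thereby reducing everything to the case $l=1$, i.e., to the products $V_k^\pm V_1^\pm$. The case $V_k^\pm V_1^+$ is indeed immediate, since $V_1^+=\sum_i R_i$ is the internal identity in each degree, and your handling of left multiplication by $V_1^-$ (which always swaps the superscript) is also correct.

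The gap is in your treatment of $V_k^\pm V_1^-$, which is the technical heart of the proof. You justify $V_k^+V_1^-=V_k^+$ ($k$ even) or $V_k^-$ ($k$ odd) by saying that ``reversing the single trailing run either preserves or reverses the global alternation pattern.'' This implicitly identifies $V_k^+$ with $W_k^+$, the sum of permutations with $k$ runs whose first run is ascending --- but no such identification is available at this point (and it is false: the $V$'s and $W$'s are only triangularly related, and $V_{k,n}^\pm\neq 0$ even for $k>n$). Right internal multiplication by $V_1^-=\sum_i R_{(1^i)}$ is the full reversal $\sigma\mapsto\sigma_n\cdots\sigma_1$, not a reversal of a trailing run, and the splitting formula you invoke only governs \emph{left} multiplication by a grouplike element, so it says nothing about this right multiplication. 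What is actually needed is a separate lemma: right multiplication by $V_1^-$ is an anti-homomorphism for the external product, $(G_1\star\cdots\star G_l)V_1^- = (G_lV_1^-)\star\cdots\star(G_1V_1^-)$, which the paper derives from $R_I V_1^-=R_{\bar{I}}$ together with the near-concatenation rule for ribbons. Applying it to the alternating factorization of $V_k^\pm$, using $V_1^+V_1^-=V_1^-$ and $V_1^-V_1^-=V_1^+$, yields the parity-dependent base case; with that lemma supplied, the rest of your bookkeeping goes through.
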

\begin{proof}
The splitting formula asserts that, for any $F_1,\dots,F_l$ in $\widehat{\Sym}$, it holds that 
\[
  V_{k}\pp (F_{1}\star\dots\star F_{l}) = (V_{k}\pp F_{1})\star\dots\star(V_{k}\pp F_{l}),
\]
and similarly for $V_{k}\mm$. 
So in particular:
\begin{align*}
V_{k}\pp V_{l}\pp =(V_{k}\pp V_{1}\pp)\star(V_{k}\pp V_{1}\mm)\star\dots\star(V_{k}\pp V_{1}^\pm)
\end{align*}
where the $\pm$ is $+$ or $-$ depending on the parity of $l$.
For even $k$, once we know that $ V_{k}\pp V_{1}\pp = V_{k}\pp V_{1}\mm =V_{k}\pp$ (the case $l=1$), this simplifies to 
\begin{align*}
V_{k}\pp V_{l}\pp  = V_{k}\pp\star V_{k}\pp\star\dots\star V_{k}\pp = V_{kl}^+.
\end{align*}
For odd $k$, once we know that $ V_{k}\pp V_{1}\pp = V_k^+$ and $ V_{k}\pp V_{1}\mm =V_{k}\mm$ (the case $l=1$), this simplifies to 
\begin{align*}
V_{k}\pp V_{l}\pp  = V_{k}\pp\star V_{k}\mm \star \dots \star V_{k}^\pm = V_{kl}^+.
\end{align*}
The other products are treated similarly, and we are left with the case $l=1$.

Recall $V_{1}\pp=\sum_{i}R_{i}$ and $R_{i}$ is the identity for
the internal product, so $V_{k}\pp V_{1}\pp=V_{k}\pp$, and $V_{k}\mm V_{1}\pp=V_{k}\mm$.

Now $V_{1}\mm=\sum_{i\geq0}R_{1^{i}}$ and $R_{1^i}$ is the maximal permutation $\omega_i = i,i-1,\dots,3,2,1$. 
Internal product to the right by $\omega_i$
sends a permutation $\sigma$ to its reversal $\sigma_i,\dots,\sigma_1$, which exchanges the ascents and descents and reverses their order. Given a composition $I$, let $\bar{I}$ denote the result after this exchanging and reversing, e.g. $(1,3,2)\mapsto (2,1,2,1) \mapsto (1,2,1,2)$. Then $R_I V_1\mm=R_{\bar{I}}$, and 
\begin{align*}
(R_I \star R_J)V_1\mm &= (R_{(i_{1},\dots,i_{\ell(I)},j_{1},\dots,j_{\ell(J)})}+R_{(i_{1},\dots,i_{\ell(I)-1},i_{\ell(I)}+j_{1},j_{2},\dots,j_{\ell(J)})})V_1\mm \\
&= R_{\overline{(i_{1},\dots,i_{\ell(I)}+j_{1},\dots,j_{\ell(J)})}}+R_{\overline{(i_{1},\dots,i_{\ell(I)-1},i_{\ell(I)},j_{1},j_{2},\dots,j_{\ell(J)})}}\\
&=R_{\bar{J}}\star R_{\bar{I}} = (R_{J} V_{1}\mm) \star ( R_{I}V_{1}\mm ).
\end{align*}
By linearity, it follows that, for any $G_1,G_2\in\widehat{\Sym}$, we have $(G_{1}\star G_{2})V_{1}\mm =  (G_{2} V_{1}\mm) \star ( G_{1}V_{1}\mm )$, and similarly
$(G_{1}\star\dots\star G_{l})V_{1}\mm =  (G_{l} V_{1}\mm) \star\dots\star ( G_{1}V_{1}\mm )$
Also we have $V_{1}\mm V_{1}\mm=V_{1}\pp$, since reversing a permutation twice does not change it. As a result,
if $k$ is even, then
\begin{align*}
   V_{k}\pp V_{1}\mm &=( V_{1}\mm V_1\mm)\star ( V_{1}\pp V_1\mm) \star \dots \star ( V_{1}\pp V_1\mm) = V_{1}\pp \star V_1\mm \star \dots \star  V_1\mm = V_k\pp; \\
V_{k}\mm V_{1}\mm &=( V_{1}\pp V_1\mm)\star ( V_{1}\mm V_1\mm) \star \dots \star ( V_{1}\mm V_1\mm) = V_{1}\mm \star V_1\pp \star \dots \star  V_1\pp = V_k\mm.
\end{align*}

And if $k$ is odd, then
\begin{align*}
   V_{k}\pp V_{1}\mm &=( V_{1}\pp V_1\mm)\star ( V_{1}\mm V_1\mm) \star \dots \star ( V_{1}\pp V_1\mm) = V_{1}\mm \star V_1\pp \star \dots \star  V_1\mm = V_k\mm; \\
V_{k}\mm V_{1}\mm &=( V_{1}\mm V_1\mm)\star ( V_{1}\pp V_1\mm) \star \dots \star ( V_{1}\mm V_1\mm) = V_{1}\pp \star V_1\pp \star \dots \star  V_1\pp = V_k\pp.
\end{align*}
(Note that the sign rule here is the same as in Equation~\eqref{relwW}.
This will be explained by the relation between Vs and Ws given below.)
\end{proof}

Now it is clear that $\Vect(V_{k,n}\pp,V_{k,n}\mm:k\in\mathbb{N})$
is closed under the internal product, and it is easy to identify subspaces,
such as $\Vect(V_{k,n}\pp+V_{k,n}\mm:k\in\mathbb{N})$, which
form subalgebras. It remains to equate these to the run algebras $\mathcal{W}_{n}^{\pm}$
and $\mathcal{W}_{n}$, and see how the Vs give them new bases.
We will do this by relating
$V_{2k}\mm$ and $V_{2k+1}\pp$ to the Eulerian peak and left peak
algebras respectively, via a triangular change of basis.
\begin{prop}
For all $k$, we have $V_{2k,n}\mm\in\mathcal{P}_{n}$, the Eulerian
peak algebra. Moreover, if $k\leq\frac{n+1}{2}$, then 
\[
V_{2k,n}\mm=2^{2k-1}P_{k-1,n}+\sum_{l<k-1}a_{k,l,n}P_{l,n}
\]
 for some constants $a_{k,l,n}$. Hence $\{V_{2k,n}\mm: 1\leq  k \leq \lfloor \frac{n+1}{2} \rfloor \}$
is a basis for $\mathcal{P}_{n}$.\end{prop}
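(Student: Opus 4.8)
The plan is to expand $V_{2k,n}\mm$ in the basis $\{W_{m,n}\pp,W_{m,n}\mm\}$ of $\mathcal{W}^\pm_n$ and then read off, via the relation \eqref{relPW}, that it lies in $\mathcal{P}_n$ with the stated leading coefficient. The combinatorial engine is the following description of the coefficients. Since $V_1\pp=\sum_a R_a$ and $V_1\mm=\sum_b R_{(1^b)}$ encode a single ascending, respectively descending, run of arbitrary length, and since the external product glues blocks with a free junction (recall $S^I\star S^J=S^{I\cdot J}$, equivalently the near-concatenation rule in the ribbon basis), the coefficient of a permutation $\sigma$ in $V_{2k,n}\mm$ equals the number of ways to cut the one-line word of $\sigma$ into $2k$ (possibly empty) consecutive blocks $B_1\cdots B_{2k}$ with $B_i$ decreasing for $i$ odd and increasing for $i$ even. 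Because each block is monotone, it cannot contain an interior peak or valley, so it lies inside a single alternating run of $\sigma$; this makes the count a local statistic of the run structure. As a base case I would record the direct computation $V_{2,n}\mm=2(W_{1,n}\pp+W_{1,n}\mm+W_{2,n}\mm)=2P_{0,n}$, matching $2^{2\cdot1-1}P_{k-1,n}$.

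\textbf{Main step and main obstacle.} The crux is to show that this block count depends only on $\run(\sigma)$ and the direction of its first run (for fixed $n,k$), so that $V_{2k,n}\mm$ genuinely lies in $\mathcal{W}^\pm_n$. I would prove this length-independence by a transfer-matrix argument on the step word in $\{A,D\}^{n-1}$: a single ascending (resp.\ descending) step acts by an explicit $2k\times 2k$ matrix $T_A$ (resp.\ $T_D$) on the vector recording the current block index, and the total count is the sum of the final vector. The point to isolate is that lengthening a run, i.e.\ inserting an extra same-type step, leaves this total unchanged. This closure is special to $V_{2k}\mm=(V_1\mm\star V_1\pp)^{\star k}$: neither $\mathcal{W}^\pm$ nor $\mathcal{P}$ is closed under $\star$ (for instance $W_{2,3}\pp\star W_{1,1}\pp=R_{(2,1,1)}+R_{(2,2)}\notin\mathcal{W}^\pm_4$), so length-independence must come from the symmetric structure of $V_{2k}\mm$ rather than from a general principle. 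Writing $V_{2k,n}\mm=\sum_m\bigl(A_m W_{m,n}\pp+B_m W_{m,n}\mm\bigr)$, relation \eqref{relPW} shows each $W_m^{\pm}$ occurs in exactly one $P_l$, so $V_{2k,n}\mm\in\mathcal{P}_n$ is equivalent to the equalities $A_{2l}=B_{2l+1}=A_{2l+1}=B_{2l+2}$ for all $l$. The equalities on odd indices, $A_{2l+1}=B_{2l+1}$, come for free from $V_{2k,n}\mm\,\omega=V_{2k,n}\mm$ (Proposition~\ref{propmulV}), which forces $A_m=B_m$ for odd $m$; the remaining equalities I would read off from the transfer-matrix expressions for $A_m,B_m$.

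\textbf{Leading coefficient.} Once membership $V_{2k,n}\mm\in\mathcal{P}_n$ is known, triangularity is immediate. Since each block lies in one run, a permutation with more than $2k$ runs cannot occur, so the coefficient of $W_m^{\pm}$ vanishes for $m>2k$; as $W_{2k}\mm$ occurs only in $P_{k-1}$, writing $V_{2k,n}\mm=\sum_l\gamma_l P_{l,n}$ forces $\gamma_l=0$ for $l\geq k$ and $\gamma_{k-1}=B_{2k}$, the coefficient of $W_{2k}\mm$. This last number is a single easy instance of the transfer matrix, the ``diagonal'' decomposition of a $2k$-run word into its $2k$ runs counted with free junctions, and equals $2^{2k-1}$. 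As an independent check, applying the morphism $\Gamma$ gives $\Gamma(V_{2k,n}\mm)$ as the degree-$n$ part of $\bigl(\sum_i e_i\star\sum_i h_i\bigr)^{\star k}=\exp\!\bigl(2k\sum_{r\text{ odd}}p_r/r\bigr)=\sum_{\lambda\vdash n,\ \lambda\text{ odd}}\frac{(2k)^{\ell(\lambda)}}{z_\lambda}\,p_\lambda$, which lies in the $\lceil n/2\rceil$-dimensional span matching $\Gamma(\mathcal{P}_n)$ and exhibits the same $2^{2k-1}$ scaling.

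\textbf{Conclusion.} Combining these gives $V_{2k,n}\mm=2^{2k-1}P_{k-1,n}+\sum_{l<k-1}a_{k,l,n}P_{l,n}$. As $k$ ranges over $1,\dots,\lfloor\frac{n+1}{2}\rfloor$ the leading index $k-1$ ranges bijectively over $0,\dots,\lfloor\frac{n-1}{2}\rfloor$, so the transition matrix from $\{P_{l,n}\}$ to $\{V_{2k,n}\mm\}$ is triangular with nonzero diagonal entries $2^{2k-1}$; since both families have $\lceil n/2\rceil=\dim\mathcal{P}_n$ elements, $\{V_{2k,n}\mm\}$ is a basis of $\mathcal{P}_n$. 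I expect the length-independence in the second step to be the hard part: it is exactly the statement that the specific symmetric sum $V_{2k}\mm$ returns to $\mathcal{W}^\pm$ despite $\star$ not preserving it, and extracting it from the transfer matrices (rather than from an abstract closure property) is where the real work lies.
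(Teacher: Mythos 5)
Your combinatorial starting point is the same as the paper's: by iterating the near-concatenation rule, the coefficient of $R_I$ in $V_{2k,n}\mm$ counts the ways to cut a permutation $\sigma_I$ with descent composition $I$ into $2k$ consecutive, possibly empty blocks, alternately decreasing and increasing. But from there your argument hinges entirely on two claims that are never established: (i) that this count is constant on each class $\mathfrak{W}^+_m$, $\mathfrak{W}^-_m$ (membership in $\mathcal{W}^\pm_n$), and (ii) the cross-parity equalities $A_{2l}=B_{2l+1}=A_{2l+1}=B_{2l+2}$ needed to land in $\mathcal{P}_n$ rather than merely in $\mathcal{W}^\pm_n$. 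Both are deferred to ``transfer-matrix expressions'' that are not set up, and you yourself flag this as where the real work lies. That is a genuine gap: the $\omega$-invariance $V_{2k,n}\mm\,\omega=V_{2k,n}\mm$ only yields $A_m=B_m$ for odd $m$, and the remaining equalities compare cut-counts of permutations with \emph{different} numbers of runs and different first-run directions, so no statement of the form ``lengthening a run leaves the count unchanged'' can produce them; you would have to identify the invariant that equates, say, $\mathfrak{W}^+_{2l}$ with $\mathfrak{W}^-_{2l+1}$ --- which is precisely the number of peaks.

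The paper's proof shows how to avoid this entirely by grouping the $2k$ factors in pairs: $V_{2k}\mm=(V_{2}\mm)^{\star k}$ and $V_{2}\mm=1+2\sum R_I$ summed over peakless compositions $I$ (each nonempty peakless word splits into a decreasing block followed by an increasing block in exactly $2$ ways, which is the unpaired version of your count). Hence the coefficient of $R_I$ in $V_{2k,n}\mm$ is the number of ways to cut $\sigma_I$ into $k$ possibly empty peakless pieces, weighted by $2^{\#\{\text{nonempty pieces}\}}$. A piece is peakless iff no peak of $\sigma_I$ lies in its interior, i.e.\ iff for every peak $q$ some cut falls in $\{q-1,q\}$; these pairs are pairwise disjoint because peaks are at least two apart, and every other cut position is unconstrained. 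The weighted count therefore depends only on $n$, $k$ and $\pk(\sigma_I)$, giving $V_{2k,n}\mm\in\mathcal{P}_n$ with no case analysis on run patterns, and the extremal case ($\pk=k-1$ peaks, one forced cut per pair, all $k$ pieces nonempty) gives the leading coefficient $2^{k-1}\cdot 2^{k}=2^{2k-1}$ at once. Your $\Gamma$-image computation is a correct consistency check (though not a proof, as $\Gamma$ is not injective), and your final triangularity/basis argument is fine once the displayed expansion is in hand; the missing content is exactly the passage from the run statistic you track to the peak statistic the coefficient actually depends on.
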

\begin{proof}
For the first assertion, it suffices to show that, for each composition
$I$, the coefficient of $R_{I}$ in $V_{2k,n}\mm$ depends only on
the number of peaks in any permutation with descent composition $I$ - let $\sigma_I$ denote such a permutation.

First observe that $V_{2k}\mm=(V_{2}\mm)^{\star k}$ by definition,
and that $V_{2}\mm=1+2\sum R_{I}$ over all compositions $I$ where $\sigma_I$ has
no peaks.

Recall that 
\[
R_{(i_{1},\dots,i_{\ell(I)})}\star R_{(j_{1},\dots,j_{\ell(J)})}=R_{(i_{1},\dots,i_{\ell(I)},j_{1},\dots,j_{\ell(J)})}+R_{(i_{1},\dots,i_{\ell(I)-1},i_{\ell(I)}+j_{1},j_{2},\dots,j_{\ell(J)}).}
\]
So the coefficient of $R_{I}$ in any external product $F\star G$
of noncommutative symmetric functions is the sum, over all $J:=(j_{1},\dots,j_{\ell(J)})$,
$J':=(j'_{1}, \dots, j'_{\ell(J')})$ with $I=(j_{1}, \dots, j_{\ell(J)}, j'_{1}, \dots, j'_{\ell(J')})$
or $I=(j_{1},\dots,j_{\ell(J)-1},j_{\ell(J)}+j'_{1},j'_{2}, \dots,  \allowbreak  j'_{\ell(J')})$,
of the products of the coefficient of $R_J$ in $F$ with the coefficient
of $R_{J'}$ in $G$. Hence the coefficient of $R_{I}$ in $V_{2k,n}\mm$
is the weighted number of ways to ``cut'' $\sigma_I$ into $k$ pieces,
possibly trivial, so that each piece contains no peak. The weight
is $2^{k'}$, where $k'$ is the number of non-trivial pieces
(because of the coefficient 2 in $V_{2}\mm=1+2\sum R_{I}$).

Since each piece must be peakless, there must be a cut immediately
to the left or immediately to the right of any peak; other than this
the positions of cuts are unconstrained. These pair of positions,
for any set of peaks, are disjoint, so the number of ways to cut $\sigma_I$
into $k'$ non-trivial peakless pieces depends only on the number
of peaks in $\sigma_I$.

If $\sigma_I$ has more than $k-1$ peaks, then $\sigma_I$ cannot be cut into $k$
peakless pieces, so $R_{I}$ does not appear in $V_{2k}\mm$.

If $\sigma_I$ has exactly $k-1$ peaks, then the ways to cut $\sigma_I$ into $k$
peakless pieces are precisely when there is one cut either immediately
to the left or immediately to the right of each peak. There are $2^{k-1}$
such ways, and the weight of this cut is $2^{k}$, since no piece
is trivial. Hence the coefficient of $P_{k-1,n}$ in $V_{2k,n}\mm$
is $2^{2k-1}$.
\end{proof}
Next, we apply a similar argument to $V_{2k+1,n}\pp$.
\begin{prop}
For all $k$, we have $V_{2k+1,n}\pp\in\mathcal{P}_{n}^{\circ}$,
the Eulerian left peak algebra. Moreover, if $k\leq\frac{n}{2}$,
then 
\[
V_{2k+1,n}\pp=2^{2k}P_{k,n}^{\circ}+\sum_{l<k}a_{k,l,n}^{\circ}P_{l,n}^{\circ}
\]
 for some constants $a_{k,l,n}^{\circ}$. Hence $\{V_{2k+1,n}\pp: 0 \leq k \leq \lfloor \frac{n}{2} \rfloor \}$
is a basis for $\mathcal{P}_{n}^{\circ}$.\end{prop}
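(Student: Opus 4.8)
The plan is to mirror the proof of the previous proposition, now tracking left peaks instead of peaks. The starting point is the factorization $V_{2k+1}\pp = V_1\pp \star (V_2\mm)^{\star k}$, which is immediate from the definitions since $V_2\mm = V_1\mm \star V_1\pp$. From the previous proof I may reuse the expansion $V_2\mm = 1 + 2\sum_I R_I$, the sum ranging over compositions $I$ for which $\sigma_I$ has no peak, together with $V_1\pp = \sum_{i\geq 0} R_i$, whose terms are exactly the single ascending runs, each with coefficient $1$. As in the previous proof, it then suffices to show that the coefficient of $R_I$ in $V_{2k+1,n}\pp$ depends only on $\pk^\circ(\sigma_I)$: since $\pk^\circ$ is a function of the descent set, this dependence forces $V_{2k+1,n}\pp = \sum_l c(l) P_{l,n}^\circ$, placing it in $\mathcal{P}_n^\circ$.

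Reading off the coefficient of $R_I$ in the external product $V_1\pp \star (V_2\mm)^{\star k}$ via the near-concatenation rule, I obtain the weighted number of ways to cut $\sigma_I$ into an initial, possibly empty, ascending run followed by $k$ possibly trivial peakless pieces, the weight being $2^{k'}$ where $k'$ is the number of nontrivial pieces among the last $k$ factors (the leading ascending factor always contributes weight $1$). The remaining task is the combinatorial claim that this weighted count depends only on $\pk^\circ(\sigma_I)$.

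The heart of the adaptation is to see that every left peak forces a two-position cut choice, exactly as every peak did before. An interior left peak (a genuine peak, at a position $\geq 2$) still forces a cut immediately to its left or to its right among the $k$ peakless factors, since neither the ascending piece nor any peakless piece may contain it. The new phenomenon is a left peak at position $1$, i.e.~$\sigma_I(1) > \sigma_I(2)$: here the leading ascending run is forced to be either empty or the single letter $\sigma_I(1)$, which is again a binary choice and plays precisely the role of the left/right choice at an ordinary peak. I expect the main obstacle to be checking that this boundary case genuinely behaves like an interior peak and, as before, that the position-pairs attached to distinct left peaks are pairwise disjoint; granting this, the number of admissible cuts into a prescribed number of nontrivial pieces, and hence the weighted sum, depends only on the number of left peaks.

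Finally I would extract the leading term and the triangularity. When $\sigma_I$ has more than $k$ left peaks there are too many forced cuts, so $R_I$ does not occur; when $\sigma_I$ has exactly $k$ left peaks, all $k$ peakless pieces are nontrivial (weight $2^{k}$) and the $k$ binary side-choices contribute a factor $2^{k}$, giving coefficient $2^{2k}$ for $P_{k,n}^\circ$, while fewer left peaks give the lower terms $a_{k,l,n}^\circ P_{l,n}^\circ$ with $l<k$. This unitriangular expansion over $0\leq k \leq \lfloor \frac{n}{2}\rfloor$, whose count $\lfloor \frac{n}{2}\rfloor + 1$ matches $\dim \mathcal{P}_n^\circ$, shows that the $V_{2k+1,n}\pp$ form a basis of $\mathcal{P}_n^\circ$.
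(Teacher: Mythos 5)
Your proposal is correct and takes essentially the same approach as the paper: the same decomposition $V_{2k+1}\pp = V_1\pp\star(V_2\mm)^{\star k}$, the same weighted cut-counting interpretation of the coefficient of $R_I$, and the same observation that a descent in position $1$ forces a binary choice for the leading descentless factor, so that all constraints unify into one two-way cut choice per left peak. The extraction of the leading coefficient $2^{2k}$ and the triangularity argument likewise match the paper's proof.
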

\begin{proof}
By the same logic as above, the coefficient of $R_{I}$ in $V_{2k+1,n}\pp$
is the weighted number of ways to ``cut'' $\sigma_I$ into $k+1$ pieces,
possibly trivial, so that the leftmost piece has no descents, and
the $k$ other pieces each contain no peak. The weight is $2^{k'}$,
where $k'$ is the number of non-trivial pieces among the $k$ peakless
pieces.

Since each piece must be peakless, there must be a cut immediately to the
left or immediately to the right of all peaks. Additionally, if $\sigma_I$ has a descent in position 1, then the first cut must be immediately before or after position 1, because the first piece must not contain descents. Hence the two conditions may be summarised as requiring a cut immediately to the
left or immediately to the right of all left peaks.

As before, these positions, for any set of left peaks, are disjoint, so the
number of ways to cut into $\sigma_I$ into a descentless first piece and
$k'$ further non-trivial peakless pieces depends only on the number
of left peaks in $\sigma_I$.

If $\sigma_I$ has more than $k$ left peaks, then $\sigma_I$ cannot be cut into
a descentless first piece and $k$ further peakless pieces, so $R_{I}$
does not appear in $V_{2k+1}\pp$.

If $\sigma_I$ has exactly $k$ left peaks, then the ways to cut $\sigma_I$ into
a descentless first piece and $k$ further peakless pieces are precisely
when there is one cut either immediately to the left or immediately
to the right of each left peak. There are $2^{k}$ such ways, and
the weight of this cut is $2^{k}$, since none of the peakless pieces
are trivial. Hence the coefficient of $P_{k,n}^{\circ}$ in $V_{2k+1,n}\pp$
is $2^{2k}$.
\end{proof}

From the above two propositions, and Equations~\eqref{relPW} and~\eqref{relPW2}
relating $P_{k,n}$ and $P_{k,n}^{\circ}$ to $W_{k,n}\pp$, $W_{k,n}\mm$, we
see that:
\begin{align*}
V_{2k,n}\mm & =2^{2k-1}W_{2k,n}\mm+\sum_{l<k}a_{k,l,n}W_{2l+1,n}+a_{k,l,n}W_{2l,n}\pp+a_{k,l-1,n}W_{2l,n}\mm;\\
V_{2k+1,n}\pp & =2^{2k}W_{2k+1,n}\pp+\sum_{l\leq k}a_{k,l,n}^{\circ}W_{2l,n}+a_{k,l-1,n}^{\circ}W_{2l-1,n}\pp+a_{k,l,n}^{\circ}W_{2l-1,n}\mm.
\end{align*}
By exchanging ascending and descending runs, it is also true that
\begin{align*}
V_{2k,n}\pp & =2^{2k-1}W_{2k,n}\pp+\sum_{l<k}a_{k,l,n}W_{2l+1,n}+a_{k,l,n}W_{2l,n}\mm+a_{k,l-1,n}W_{2l,n}\pp;\\
V_{2k+1,n}\mm & =2^{2k}W_{2k+1,n}\mm+\sum_{l\leq k}a_{k,l,n}^{\circ}W_{2l,n}+a_{k,l-1,n}^{\circ}W_{2l-1,n}\mm+a_{k,l,n}^{\circ}W_{2l-1,n}\pp.
\end{align*}
Also, because $P_{k}^{\circ}=W_{2k}^{\circ}+W_{2k+1}^{\circ}$, and
\begin{align*}
P_{k} & =W_{2k}^{+}+W_{2k+1}^{-}+W_{2k+1}^{+}+W_{2k+2}^{-},\\
W_{2k+1}^{\circ}+W_{2k+2}^{\circ} & =W_{2k}^{-}+W_{2k+1}^{+}+W_{2k+1}^{-}+W_{2k+2}^{+},
\end{align*}
we have 
\begin{align*}
V_{2k+1,n}\pp & =2^{2k}W_{2k+1}^{\circ}+2^{2k}W_{2k}^{\circ}+\sum_{l<k}a_{k,l,n}^{\circ}(W_{2l+1,n}^{\circ}+W_{2l,n}^{\circ});\\
V_{2k,n}\pp & =2^{2k-1}W_{2k}^{\circ}+2^{2k-1}W_{2k-1}^{\circ}+\sum_{l<k}a_{k,l-1,n}^{\circ}(W_{2l,n}^{\circ}+W_{2l-1,n}^{\circ}).
\end{align*}
Combining all this with the multiplicativity of the $V_{k}$'s in
Proposition~\ref{propmulV}, we can conclude:

\begin{theo} \label{th:subalgs}
For each $n$, 
\[
\Vect(V_{k,n}\pp,V_{k,n}\mm:k\in\mathbb{N})=\Vect(V_{k,n}\pp,V_{k,n}\mm:1\leq k<n)=\mathcal{W}_{n}^{\pm}
\]
is an algebra under the internal product. Five commutative subalgebras
of $\mathcal{W}_{n}^{\pm}$ are:
\begin{align*}
\Vect(V_{k,n}:k\in\mathbb{N}) & =\Vect(V_{k,n}:1\leq k<n) &  & =\mathcal{W}_{n},\\
\Vect(V_{k,n}\pp:k\in\mathbb{N}) & =\Vect(V_{k,n}\pp:1\leq k\leq n) &  & =\mathcal{W}_{n}^{\circ},\\
\Vect(V_{2k,n}\mm:k\in\mathbb{N}) & =\Vect(V_{2k,n}\mm:1\leq2k\leq n+1) &  & =\mathcal{P}_{n},\\
\Vect(V_{2k+1,n}\pp:k\in\mathbb{N}) & =\Vect(V_{2k+1,n}\pp:1\leq2k+1\leq n+1) &  & =\mathcal{P}_{n}^{\circ},
\end{align*}
and
\begin{align*}
\Vect(V_{2k+1,n}\pp,V_{2k+1,n}\mm,V_{2k,n}:k\in\mathbb{N}) &= \\ \Vect(V_{2k+1,n}\pp,V_{2k+1,n}\mm,V_{2l,n}:1\leq2k+1,2l<n) &= \mathcal{C}_n.
\end{align*}
Furthermore, $\mathcal{P}_{n}^{\circ}$ is central in $\mathcal{W}_{n}^{\pm}$.
\end{theo}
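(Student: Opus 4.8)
The plan is to deduce everything from the two facts already in place: the multiplicative law of Proposition~\ref{propmulV}, which will give closure, commutativity and centrality, and the triangular expansion formulas derived just above, which will identify the various $V$-spans with the run algebras. I would establish the equalities of spaces first, and then the multiplicative statements.

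For the span identifications the key point is that each expansion has a single leading term with a nonzero (power of two) coefficient. Reading off the four displayed expansions uniformly, $V_{k,n}\pp = 2^{k-1}W_{k,n}\pp + (\text{terms } W^{\pm}_{m,n},\, m<k)$ and likewise for $V_{k,n}\mm$; summing gives $V_{k,n}=2^{k-1}W_{k,n}+(\text{lower})$; the $W^\circ$-expansions give $V_{k,n}\pp = 2^{k-1}(W^\circ_{k,n}+W^\circ_{k-1,n})+(\text{lower})$; and the two peak propositions give $V_{2k,n}\mm = 2^{2k-1}P_{k-1,n}+(\text{lower})$ and $V_{2k+1,n}\pp = 2^{2k}P^\circ_{k,n}+(\text{lower})$. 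In each family the change of basis to the relevant $W$-, $W^\circ$-, $P$-, or $P^\circ$-basis is thus triangular in the index with invertible diagonal, so the $V$'s whose index lies in the stated finite range form a basis of the target algebra; the dimension counts match (for instance $\lfloor\frac{n+1}{2}\rfloor=\lceil\frac{n}{2}\rceil=\dim\mathcal{P}_n$), and for $\mathcal{C}_n$ the leading terms $W_{2k+1,n}\pp$, $W_{2k+1,n}\mm$, $W_{2k,n}$ are exactly its defining basis. This gives the second equality in each line. For the first equality I would show the $V$'s of index beyond the finite range are redundant: a permutation of $\mathfrak{S}_n$ has at most $n-1$ alternating runs, so $W^{\pm}_{k,n}=0$ for $k\geq n$ (and $W^\circ_{k,n}=0$ for $k>n$), killing the leading term, while the first assertions of the peak propositions put $V_{2k,n}\mm\in\mathcal{P}_n$ and $V_{2k+1,n}\pp\in\mathcal{P}_n^\circ$ for \emph{all} $k$; hence each out-of-range $V$ already lies in the span of the in-range ones.

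The remaining statements follow directly from Proposition~\ref{propmulV}. Closure of $\mathcal{W}_n^\pm$ is immediate since every $V_k^{\pm}V_l^{\pm}$ is again some $V_{kl}^{\pm}$; for the subalgebras I would record the consequences $V_kV_l=2V_{kl}$, $V_k\pp V_l\pp=V_{kl}\pp$, $V_{2k}\mm V_{2l}\mm=V_{4kl}\mm$, $V_{2k+1}\pp V_{2l+1}\pp=V_{(2k+1)(2l+1)}\pp$, and, for $\mathcal{C}_n$, the fact that each of the nine products among $V_{2k+1}\pp,V_{2k+1}\mm,V_{2k}$ again has one of these three forms (using e.g. $V_{2k+1}\pp V_{2l}=V_{(2k+1)(2l)}$). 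In every case the resulting index is the product of the two indices, which is symmetric, so all five subalgebras are commutative. Finally, for centrality of $\mathcal{P}_n^\circ=\Vect(V_{2k+1,n}\pp)$, the oddness of $2k+1$ makes the rules $V_{2k+1}\pp V_l\pp=V_{(2k+1)l}\pp$ and $V_{2k+1}\pp V_l\mm=V_{(2k+1)l}\mm$ coincide with $V_l\pp V_{2k+1}\pp$ and $V_l\mm V_{2k+1}\pp$, so each $V_{2k+1,n}\pp$ commutes with every generator of $\mathcal{W}_n^\pm$.

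The main obstacle is not a hard computation but the careful bookkeeping: verifying that the triangular systems invert on precisely the claimed index ranges, reconciling the several parities appearing in the four expansions, and confirming the redundancy of the out-of-range generators through the vanishing of the top $W$ or $P$. The substantive content, namely the multiplicative law, is already established, so the theorem is essentially an assembly of the preceding results.
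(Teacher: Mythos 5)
Your proposal is correct and follows essentially the same route as the paper, which likewise obtains the theorem by combining the triangular leading-term expansions of the $V$'s in the $W$-, $W^\circ$-, $P$- and $P^\circ$-bases (together with the ``for all $k$'' membership statements to dispose of the out-of-range generators) with the multiplicativity of Proposition~\ref{propmulV}. The only point I would tighten is the closing claim that symmetry of the index $kl$ alone gives commutativity: since e.g.\ $V_2\pp V_3\mm=V_6\pp$ while $V_3\mm V_2\pp=V_6\mm$, one must also check in each of the five subalgebras that the sign of the product is symmetric, which your explicit product formulas do in fact verify.
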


\begin{rema}The quantities $V_{k,n}\pp,V_{k,n}\mm$ previously appeared in~\cite{petersen}, as order polynomials of enriched $P$-partitions. The translation between his notation and ours is
\begin{align*}
\rho(x) & =V_{2x,n}\mm;\\
\bar{\rho}(x) & =V_{2x,n}\pp;\\
\rho^{(l)}(x) & =V_{2x+1,n}\pp;\\
\rho^{(r)}(x) & =V_{2x+1,n}\mm.
\end{align*}
Our Proposition~\ref{propmulV}, the multiplication rule, is his Theorems 3.1, 3.3 and 3.5.
\end{rema}

\section{Computation of the orthogonal idempotents}
\label{sec:idem}

In this section, we compute a complete set of primitive idempotents
for $\mathcal{W}_{n}^{\pm}$ and for its five commutative subalgebras
in Theorem~\ref{th:subalgs}. 


\subsection{Orthogonal idempotents of the commutative subalgebras}

The starting point is the orthogonal idempotents of the Eulerian peak
and left peak algebras. These were computed in \cite{petersen,schocker},
but we rederive them here so this paper is self-contained. The argument
here mirrors the approach of Loday (see \cite[Sec. 5.3]{gelfand}) for the idempotents of
the Eulerian (descent) algebra. The key is the following observation:

 \begin{lemm}\label{multfamilytoidempotents}For some multiplicatively-closed
subset $I\subseteq\mathbb{N}$, suppose $\{X_{i}:i\in I\}$ generate
a subalgebra $\mathcal{A}$ of $\widehat{\Sym}$ with $X_{i}X_{j}=X_{ij}$ (under the
internal product). Assume also that there are elements $J_{k}$ in
this subalgebra such that, for each $i$, we have $X_{i}=\sum_{k\geq0}i^{k}J_{k}$
(as an identity of formal power series within $\widehat{\Sym}$). Then
the non-zero $J_{k}$ form a basis of orthogonal idempotents for $\mathcal{A}$. \end{lemm}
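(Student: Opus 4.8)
The plan is to substitute the power-series expansion $X_i=\sum_{k\ge0}i^k J_k$ directly into the multiplicativity relation $X_iX_j=X_{ij}$ and then read off the structure of the $J_k$ by comparing coefficients of $i^kj^l$. Expanding both sides gives
\[
\sum_{k,l\ge0} i^k j^l\, J_kJ_l \;=\; X_{ij} \;=\; \sum_{m\ge0} (ij)^m J_m \;=\; \sum_{m\ge0} i^m j^m J_m ,
\]
for every $i,j\in I$. Matching the coefficient of $i^kj^l$ on the two sides, one finds $J_kJ_l=J_k$ when $k=l$ and $J_kJ_l=0$ when $k\ne l$; that is, $J_kJ_l=\delta_{kl}J_k$. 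This single comparison simultaneously yields idempotency and orthogonality, so the whole lemma rests on making the coefficient comparison rigorous.

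To legitimise the comparison I would work one homogeneous degree at a time. Since the internal product preserves degree, the degree-$n$ component of the displayed identity reads $\sum_{k,l} i^k j^l\, J_{k,n}J_{l,n}=\sum_m i^m j^m J_{m,n}$, an identity in the finite-dimensional space $\Sym_n$. Convergence of $\sum_k i^k J_k$ in $\widehat{\Sym}$ forces, in each fixed degree $n$, all but finitely many $J_{k,n}$ to vanish, so both sides are genuine polynomials in $i$ and $j$ with coefficients in $\Sym_n$. Because $I$ is infinite, a polynomial vanishing on all of $I\times I$ has all its coefficients zero (fix $j\in I$, deduce that the resulting polynomial in $i$ is identically zero, then let $j$ vary); hence $J_{k,n}J_{l,n}=\delta_{kl}J_{k,n}$ for every $n$, and reassembling the degrees gives $J_kJ_l=\delta_{kl}J_k$ in $\widehat{\Sym}$.

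It then remains to check that the nonzero $J_k$ actually form a basis of $\mathcal{A}$. Spanning is immediate: $I$ is multiplicatively closed, so any nonempty product of generators is again some $X_{i'}$, whence $\mathcal{A}=\Vect\{X_i:i\in I\}$; each $X_i$ lies in $\Vect\{J_k\}$ by hypothesis, while conversely each $J_k$ was assumed to lie in $\mathcal{A}$, so $\mathcal{A}=\Vect\{J_k:J_k\ne0\}$. Linear independence follows from orthogonality: if $\sum_k c_k J_k=0$, multiplying by a nonzero $J_l$ gives $c_lJ_l=0$, forcing $c_l=0$. I expect the only delicate point to be the second paragraph, namely pinning down the meaning of the formal power-series identity in the completion $\widehat{\Sym}$ and confirming that it does restrict, degree by degree, to an honest polynomial identity to which the infinitude of $I$ can be applied; once that is settled, the algebraic heart of the argument, the coefficient comparison giving $J_kJ_l=\delta_{kl}J_k$, is essentially automatic.
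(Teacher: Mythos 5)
Your proof is correct and follows essentially the same route as the paper: substitute the expansions into $X_iX_j=X_{ij}$ and equate coefficients of $i^kj^l$ to get $J_kJ_l=\delta_{kl}J_k$. The extra care you take (degree-by-degree reduction to a polynomial identity over the infinite set $I$, and the spanning/independence check for the basis claim) is sound and in fact fills in details the paper's two-line proof leaves implicit.
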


\begin{proof}
For all $i,j\in I$, we have 
\[
\sum_{k}(ij)^{k}J_{k}=X_{ij}=X_{i}X_{j}=\sum_{r,s}i^{r}j^{s}J_{r}J_{s}.
\]
Equating coefficients of $(ij)^{k}$ then shows that $J_{r}J_{s}=0$
if $r\neq s$, and $J_{k}^{2}=J_{k}$.
\end{proof}
Recall from Theorem~\ref{th:subalgs} that the Eulerian peak algebra
is generated by $\{V_{i,n}\mm:i\mbox{ even}\}$ and the Eulerian left
peak algebra by $\{V_{i,n}\pp:i\mbox{ odd}\}$. By Proposition~\ref{propmulV},
these generating sets satisfy the multiplicative condition of Lemma~\ref{multfamilytoidempotents}.
Thus, to calculate the orthogonal idempotents, it suffices to find
$I_{k,n}\mm,J_{k,n}\pp$ such that $V_{2i,n}\mm=\sum_{k\geq0}(2i)^{k}I_{k,n}\mm$
and $V_{2i+1,n}\pp=\sum_{k\geq0}(2i+1)^{k}J_{k,n}\pp$. Such expressions
come from the formal exponential and logarithm operations in $\widehat{\Sym}$,
defined via their familiar power series expansions:
\begin{align*}
\log_{\star}(1+F): & =F-\frac{1}{2}F{}^{\star2}+\frac{1}{3}F{}^{\star3}-\dots,\\
\exp_{\star}F: & =1+F+\frac{1}{2!}F{}^{\star2}+\frac{1}{3!}F{}^{\star3}+\dots,
\end{align*}
for $F\in\widehat{\Sym}$ with no term in degree 0. It can be checked
that $\exp_{\star}(F+G)=(\exp_{\star}F)\star(\exp_{\star}G)$ whenever
$F$ and $G$ commute. Also, $(1+F)^{\star k}=\exp_{\star}(k\log_{\star}(1+F))$
for all positive integers $k$ (and all $F\in\widehat{\Sym}$ with no
term in degree 0), and this can be used to define $(1+F)^{\star k}$
when $k$ is not a positive integer.

Now we have
\begin{align*}
V_{2i}\mm=(V_{2}\mm)^{\star i} & =\exp_{\star}(i\log_{\star}V_{2}\mm)\\
 & =\sum_{k\geq0}\frac{1}{k!}(i\log_{\star}V_{2}\mm)^{\star k}\\
 & =\sum_{k\geq0}(2i)^{k}\left(\frac{1}{2^{k}k!}(\log_{\star}V_{2}\mm)^{\star k}\right),
\end{align*}
and 
\begin{equation} \label{Voddppidems}
\begin{split}
V_{2i+1}\pp & =V_{1}\pp\star(V_{2}\mm)^{\star\left(-\frac{1}{2}\right)}(V_{2}\mm)^{\star\left(i+\frac{1}{2}\right)} \\ & =V_{1}\pp\star(V_{2}\mm)^{\star\left(-\frac{1}{2}\right)}\star\exp_{\star}\left(\left(i+\frac{1}{2}\right)\log_{\star}V_{2}\mm\right)\\
 & =\sum_{k\geq0}(2i+1)^{k}\left(\tfrac{1}{2^{k}k!}V_{1}\pp\star(V_{2}\mm)^{\star\left(-\frac{1}{2}\right)}\star(\log_{\star}V_{2}\mm)^{\star k}\right).
\end{split}
\end{equation}

This motivates the definitions: 
\begin{align*}
I_{1}\mm & :=\frac{1}{2}\log_{\star}V_{2}\mm, & I_{k}\mm & :=\frac{1}{k!}I_{1}\mm{}^{\star k},\\
I_{1}\pp & :=\frac{1}{2}\log_{\star}V_{2}\pp, & I_{k}\pp & :=\frac{1}{k!}I_{1}\pp{}^{\star k},\\
J_{0}\pp & :=V_{1}\pp\star(V_{2}\mm)^{\star\left(-\frac{1}{2}\right)},\qquad & J_{k}\pp & :=J_{0}\pp\star I_{k}\mm,\\
J_{0}\mm & :=V_{1}\mm\star(V_{2}\pp)^{\star\left(-\frac{1}{2}\right)},\qquad & J_{k}\mm & :=J_{0}\mm\star I_{k}\pp.
\end{align*}
Also, make the convention $I_{0}\mm=I_{0}\pp=1$. As with the $V$s,
write $I_{k,n}\pp$, $I_{k,n}\mm$, $J_{k,n}\pp$, and $J_{k,n}\mm$ for the degree
$n$ part of $I_{k}\pp$, $I_{k}\mm$, $J_{k}\pp$,  and $J_{k}\mm$. The above calculations
show that $I_{k,n}\mm,J_{k,n}\pp$ are orthogonal idempotents for
the Eulerian peak and left peak algebras respectively.
However, we will see that $I_{k,n}\mm$ is not orthogonal to $J_{k,n}\pp$. A
first attempt to rectify this might be to calculate products such
as $I_{k,n}\mm J_{k,n}\pp$, and guess simple linear combinations
of these idempotents that are orthogonal. These product calculations
are considerably easier in the symmetric functions - that is, we calculate the images of $I_{k,n}\pp,I_{k,n}\mm,J_{k,n}\pp,J_{k,n}\mm$  
under the homomorphism $\Gamma: \Sym \rightarrow Sym$, then find simple linear combinations whose images have the 
form  $\sum_{\lambda \in \Lambda_i} \frac{1}{z_\lambda} p_\lambda$ over disjoint subsets $\Lambda_i$  of partitions of $n$ 
(see the last paragraph of Subsection~\ref{sym}).

\begin{rema}{Note that $J_{k,n}\mm$ is not a system of orthogonal
idempotents. Indeed, the proof below will show that the symmetric
function images $\Gamma(J_{k,n}\mm)$ are not of the form }$\sum_{\lambda\in\Lambda_{k}}\frac{1}{z_{\lambda}}p_{\lambda}$.
The problem is that $\{V_{2i+1}\mm\}$ is not closed under the internal
product, so the analogue of Equation~\eqref{Voddppidems}{ }cannot be
applied in Proposition~\ref{multfamilytoidempotents}. \end{rema}

\begin{theo} \label{theoidemp}
With notation as above:
\begin{enumerate}[label=\roman*)]
\item A basis of orthogonal idempotents of $\mathcal{W}_{n}$ is given
by 
\[
\frac{1}{2}(I_{k,n}\pp+I_{k,n}\mm),\quad1\leq k\leq n\text{ and }k\equiv n\mod2
\]
and
\[
\frac{1}{2}(J_{l,n}\pp+J_{l,n}\mm-I_{l,n}\pp-I_{l,n}\mm),\quad0\leq l\leq n-4\text{ and }l\equiv n\mod2.
\]
Their images under $\Gamma:\Sym\rightarrow Sym$ are \[   \sum_{\substack{ \lambda \vdash n  \\   \ell_o(\lambda)=k, \, \ell_e(\lambda)=0 }} \frac{1}{z_\lambda} p_{\lambda},   \quad 1\leq k \leq n \text{ and } k\equiv n \mod 2 \] and \[   \sum_{\substack{ \lambda \vdash n  \\   \ell_o(\lambda)=l, \, \ell_e(\lambda)>0 \text{ even }}} \frac{1}{z_\lambda} p_{\lambda},   \quad 0\leq l \leq n-4 \text{ and } l \equiv n \mod 2. \]
\item A basis of orthogonal idempotents of $\mathcal{W}_{n}^{\circ}$ is given by
\[
I_{k,n}\pp,\quad1\leq k\leq n\text{ and }k\equiv n\mod2
\]
and
\[
\frac{1}{2}(J_{l,n}\pp-I_{l,n}\pp),\quad0\leq l\leq n-2\text{ and }l\equiv n\mod2.
\]
Their images under $\Gamma:\Sym\rightarrow Sym$ are \[   \sum_{\substack{ \lambda \vdash n  \\   \ell_o(\lambda)=k, \, \ell_e(\lambda)=0 }} \frac{1}{z_\lambda} p_{\lambda},   \quad 1\leq k \leq n \text{ and } k\equiv n \mod 2 \] and \[   \sum_{\substack{ \lambda \vdash n  \\   \ell_o(\lambda)=l, \, \ell_e(\lambda)>0 }} \frac{1}{z_\lambda} p_{\lambda},   \quad 0\leq l \leq n-2 \text{ and } l \equiv n \mod 2. \]
\item A basis of orthogonal idempotents of $\mathcal{C}_{n}$ is given by
\[
\frac{1}{2}(I_{k,n}\pp+I_{k,n}\mm),\quad1\leq k\leq n\text{ and }k\equiv n\mod2
\]
and
\[
\frac{1}{2}(J_{l,n}\pp+J_{l,n}\mm-I_{l,n}\pp-I_{l,n}\mm),\quad0\leq l\leq n-4\text{ and }l\equiv n\mod2
\]
and
\[
\frac{1}{2}(J_{m,n}\pp-J_{m,n}\mm),\quad0\leq m\leq n-2\text{ and }m\equiv n\mod2.
\]
Their images under $\Gamma:\Sym\rightarrow Sym$ are \[     \sum_{\substack{ \lambda \vdash n \\ \ell_o(\lambda)=k, \; \ell_e(\lambda)=0 }} \frac{1}{z_\lambda} p_\lambda, \quad 1\leq k \leq n \text{ and } k\equiv n \mod 2 \] and \[   \sum_{\substack{ \lambda \vdash n \\ \ell_o(\lambda)=l, \; \ell_e(\lambda) \text{ even }}} \frac{1}{z_\lambda} p_\lambda, \quad 0\leq l \leq n-4 \text{ and } l\equiv n \mod 2 \] and \[   \sum_{\substack{ \lambda \vdash n \\ \ell_o(\lambda)=m, \; \ell_e(\lambda)>0 \text{ odd} }} \frac{1}{z_\lambda} p_\lambda, \quad 0\leq m \leq n-2 \text{ and } m\equiv n \mod 2. \]
\item A basis of orthogonal idempotents of $\mathcal{P}_{n}$ is given by
\[
I_{k,n}\mm,\quad1\leq k\leq n\text{ and }k\equiv n\mod2.
\]
Their images under $\Gamma:\Sym\rightarrow Sym$ are \[   \sum_{\substack{ \lambda \vdash n  \\   \ell_o(\lambda)=k, \, \ell_e(\lambda)=0 }} \frac{1}{z_\lambda} p_{\lambda},   \quad 1\leq k \leq n \text{ and } k\equiv n \mod 2. \]
\item A basis of orthogonal idempotents of $\mathcal{P}_{n}^{\circ}$ is given by
\[
J_{k,n}\pp,\quad0\leq k\leq n\text{ and }k\equiv n\mod2.
\]
Their images under $\Gamma:\Sym\rightarrow Sym$ are \[ \sum_{\substack{ \lambda \vdash n  \\   \ell_o(\lambda)=k }} \frac{1}{z_\lambda} p_{\lambda}, \quad 0\leq k \leq n \text{ and } k\equiv n \mod 2.\] 

\end{enumerate}
 \end{theo}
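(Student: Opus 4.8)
The plan is to transport the whole computation into the algebra $Sym_n$ of ordinary symmetric functions via the homomorphism $\Gamma$, where the orthogonal idempotents form the transparent family $\frac{1}{z_\lambda}p_\lambda$. Since $\Gamma$ respects both the internal and the external product, and since any sum $\sum_{\lambda\in\Lambda}\frac{1}{z_\lambda}p_\lambda$ over a set $\Lambda$ of partitions is automatically an idempotent (orthogonal to the analogous sum over a disjoint set), it suffices to: (a) compute the $\Gamma$-images of $I_k\pp,I_k\mm,J_k\pp,J_k\mm$; (b) recognize each stated linear combination as such a power-sum sum over a disjoint class of partitions; and (c) lift this back to the run algebras. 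I would start from the two building blocks $\Gamma(V_1\pp)=\sum_{i\geq0}h_i$ and $\Gamma(V_1\mm)=\sum_{i\geq0}e_i$, together with the classical identities $\sum_i h_i=\exp_\star(\sum_{r\geq1}\frac{p_r}{r})$ and $\sum_i e_i=\exp_\star(\sum_{r\geq1}\frac{(-1)^{r-1}}{r}p_r)$.

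These immediately give $\Gamma(V_2\pp)=\Gamma(V_2\mm)=\exp_\star(2\sum_{r \text{ odd}}\frac{p_r}{r})$, the even-index power sums cancelling, so that $\Gamma(I_k\pm)=\frac{1}{k!}\big(\sum_{r \text{ odd}}\frac{p_r}{r}\big)^{\star k}=\sum_{\ell(\lambda)=k,\ \lambda \text{ all odd}}\frac{1}{z_\lambda}p_\lambda$; in particular $\Gamma(I_k\pp)=\Gamma(I_k\mm)$ because $Sym$ is commutative. For the left-peak side I would evaluate the ``square roots'' $(V_2\mm)^{\star(-1/2)}$ and $(V_2\pp)^{\star(-1/2)}$, obtaining $\Gamma(J_0\pp)=\exp_\star(\sum_{r \text{ even}}\frac{p_r}{r})$ and $\Gamma(J_0\mm)=\exp_\star(-\sum_{r \text{ even}}\frac{p_r}{r})$, whence $\Gamma(J_k\pp)=\sum_{\ell_o(\lambda)=k}\frac{1}{z_\lambda}p_\lambda$ and $\Gamma(J_k\mm)=\sum_{\ell_o(\lambda)=k}\frac{(-1)^{\ell_e(\lambda)}}{z_\lambda}p_\lambda$. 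The sign $(-1)^{\ell_e(\lambda)}$ is exactly the mechanism that separates partitions by the parity and positivity of their number of even parts.

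With these images in hand, each stated combination should be checked to be $\sum_{\lambda\in\Lambda}\frac{1}{z_\lambda}p_\lambda$ for an explicit class $\Lambda$: for example $\Gamma(I_k\pp+I_k\mm)=2\sum_{\ell_o=k,\ \ell_e=0}\frac{1}{z_\lambda}p_\lambda$, $\Gamma(J_l\pp+J_l\mm-I_l\pp-I_l\mm)=2\sum_{\ell_o=l,\ \ell_e>0 \text{ even}}\frac{1}{z_\lambda}p_\lambda$, and $\Gamma(J_m\pp-J_m\mm)=2\sum_{\ell_o=m,\ \ell_e \text{ odd}}\frac{1}{z_\lambda}p_\lambda$ (the recurring factor $2$ explaining the coefficients $\frac12$). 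One then verifies that the classes $\Lambda$ occurring in each part are pairwise disjoint and that their number matches the dimension of the corresponding subalgebra recorded after the definition. Membership of these combinations in the right subalgebra I would settle by inverting the Vandermonde systems $V_{2i}\pm=\sum_k(2i)^kI_k\pm$ and $V_{2i+1}\pm=\sum_k(2i+1)^kJ_k\pm$: since the interpolation coefficients are identical for the $+$ and $-$ versions, $I_k\pp+I_k\mm$ and $J_l\pp+J_l\mm$ are combinations of the $V_{2i}=V_{2i}\pp+V_{2i}\mm$ and $V_{2i+1}=V_{2i+1}\pp+V_{2i+1}\mm$, landing them in $\mathcal{W}_n$ and $\mathcal{C}_n$, while $J_m\pp-J_m\mm\in\Vect(V_{2i+1}\pp,V_{2i+1}\mm)\subseteq\mathcal{C}_n$, and $I_k\pp,J_l\pp\in\mathcal{W}_n^\circ$, all via Theorem~\ref{th:subalgs}.

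The final step is the lift. Because the images have pairwise disjoint supports they are linearly independent, so the proposed elements are linearly independent in the subalgebra; matching their number against its dimension shows they form a basis, whence $\Gamma$ restricted to that subalgebra is injective. Injectivity transports the identities $\Gamma(e_i)\Gamma(e_j)=\delta_{ij}\Gamma(e_i)$ back to $e_ie_j=\delta_{ij}e_i$, and a basis of orthogonal idempotents of a commutative algebra is automatically a complete set of primitive ones. I expect the main obstacle to be the clean evaluation of the two square-root generating functions $(V_2\mm)^{\star(-1/2)}$, $(V_2\pp)^{\star(-1/2)}$ and the careful bookkeeping of which parity class $(\ell_o,\ell_e)$ each combination selects — in particular tracking the factors of $2$ that force the coefficients $\frac12$ — rather than any conceptual difficulty, since for $\mathcal{P}_n$ and $\mathcal{P}_n^\circ$ the idempotency is already guaranteed by Lemma~\ref{multfamilytoidempotents}.
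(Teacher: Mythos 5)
Your proposal is correct and follows essentially the same route as the paper's proof: compute the $\Gamma$-images of $I_k^\pm, J_k^\pm$ from the exponential expressions for $\sum h_i$ and $\sum e_i$, recognize each claimed idempotent as $\sum_{\lambda\in\Lambda}\frac{1}{z_\lambda}p_\lambda$ over pairwise disjoint classes $\Lambda$, check membership in the correct subalgebra via the $V$-basis, and use the dimension count to see that $\Gamma$ restricts to an isomorphism so that orthogonality and idempotency lift back. The only differences are cosmetic (you compute $\Gamma(J_k^-)$ with the sign $(-1)^{\ell_e(\lambda)}$ and then form the sums and differences, where the paper computes $\frac12\Gamma(J_k^+\pm J_k^-)$ directly).
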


\begin{proof}
Observe that each set of claimed images under $\Gamma$ have cardinality
equal to the dimension of the corresponding subalgebra. Thus, if
these images are correct, $\Gamma:\Sym\rightarrow Sym$ must restrict
to an algebra isomorphism on each of these subalgebras. Hence the
claimed sets are orthogonal and idempotent if and only if their images
under $\Gamma$ are orthogonal and idempotent.

Each claimed symmetric function image is a sum of $\frac{p_{\lambda}}{z_{\lambda}}$,
the orthogonal idempotents of $Sym$, hence their sums are also idempotent.
Furthermore, for each subalgebra, no partition $\lambda$ appears
in more than one sum, so the sums are orthogonal.

So it suffices to check that the claimed idempotents are indeed in
the correct subalgebras, and have the claimed images under $\Gamma$.
Note that $I_{1}\mm$ is a series in $V_{2}\mm$, and $I_{1}\pp$
is a series in $V_{2}\pp$, so 
\begin{align*}
I_{k,n}\mm & \in\Vect(V_{2k,n}\mm)=\mathcal{P}_{n},\\
I_{k,n}\pp & \in\Vect(V_{2k,n}\pp),\\
J_{k,n}\pp & \in\Vect(V_{2k+1,n}\pp)=\mathcal{P}_{n}^{\circ},\\
J_{k,n}\mm & \in\Vect(V_{2k+1,n}\mm).
\end{align*}
By symmetry in the definitions of $I_{k}\pp,I_{k}\mm,J_{k}\pp,J_{k}\mm$,
it is clear that 
\begin{align*}
I_{k,n}\pp+I_{k,n}\mm & \in\Vect(V_{2k,n}),\\
J_{k,n}\pp+J_{k,n}\mm & \in\Vect(V_{2k+1,n}).
\end{align*}
Thus all claimed idempotents are in the correct subalgebras.

To calculate the symmetric function images, first recall that 
\begin{align*}
\Gamma(V_{1}\pp) & =\sum_{n\geq0}h_{n}=\exp_{\star}\left(\sum_{i\geq1}\frac{p_{i}}{i}\right),\\
\Gamma(V_{1}\mm) & =\sum_{n\geq0}e_{n}=\exp_{\star}\left(\sum_{i\geq1}\frac{(-1)^{i-1}p_{i}}{i}\right).
\end{align*}
Hence 
\begin{align*}
\Gamma(I_{1}\mm) & =\frac{1}{2}\Gamma(\log_{\star}(V_{1}\mm\star V_{1}\pp))\\
 & =\frac{1}{2}(\log_{\star}(\Gamma(V_{1}\mm))+\log_{\star}(\Gamma(V_{1}\pp)))\\
 & =\frac{1}{2}\left(\sum_{i\geq 1}\frac{p_{i}}{i}+\frac{(-1)^{i-1}p_{i}}{i}\right)\\
 & =\sum_{i\text{ odd }}\frac{p_{i}}{i},
\end{align*}
and similarly
\begin{align*}
\Gamma(I_{1}\pp) & =\frac{1}{2}\Gamma(\log_{\star}(V_{1}\pp\star V_{1}\mm)) \\
 & =\frac{1}{2}(\log_{\star}(\Gamma(V_{1}\pp))+\log_{\star}(\Gamma(V_{1}\mm))) \\
 & =\sum_{i\text{ odd }}\frac{p_{i}}{i}.
\end{align*}
Consequently, 
\begin{align*}
\Gamma(I_{k}\mm)=\Gamma(I_{k}\pp) & =\frac{1}{k!}\left(\sum_{i\text{ odd }}\frac{p_{i}}{i}\right)^{\star k}\\
 & =\sum_{\ell_{o}(\lambda)=k,\;\ell_{e}(\lambda)=0}\frac{1}{z_{\lambda}}p_{\lambda}.
\end{align*}

Also,
\begin{align*}
\Gamma(J_{0}\pp) & =\Gamma(V_{1}\pp)\star\Gamma(V_{1}\mm)^{\star\left(-\frac{1}{2}\right)}\star\Gamma(V_{1}\pp)^{\star\left(-\frac{1}{2}\right)}\\
 & =\Gamma(V_{1}\pp)^{\star\left(\frac{1}{2}\right)}\star\Gamma(V_{1}\mm)^{\star\left(-\frac{1}{2}\right)} \\
 & =\exp_{\star}\left(\frac{1}{2}\left(\sum_{i\geq1}\frac{p_{i}}{i}-\frac{(-1)^{i-1}p_{i}}{i}\right)\right)\\
 & =\exp_{\star}\left(\sum_{i>0 \text{ even }}\frac{p_{i}}{i}\right)=\sum_{\ell_{o}(\lambda)=0}\frac{1}{z_{\lambda}}p_{\lambda},
\end{align*}
and 
\begin{align*}
\Gamma(J_{0}\mm) & =\Gamma(V_{1}\mm)\star\Gamma(V_{1}\pp)^{\star\left(-\frac{1}{2}\right)}\star\Gamma(V_{1}\mm)^{\star\left(-\frac{1}{2}\right)}\\
 & =\Gamma(V_{1}\pp)^{\star\left(-\frac{1}{2}\right)}\star\Gamma(V_{1}\mm)^{\star\left(\frac{1}{2}\right)} \\
 & =\exp_{\star}\left(\frac{1}{2}\left(\sum_{i\geq1}-\frac{p_{i}}{i}+\frac{(-1)^{i-1}p_{i}}{i}\right)\right)\\
 & =\exp_{\star}\left(\sum_{i>0 \text{ even }}-\frac{p_{i}}{i}\right)=\sum_{\ell_{o}(\lambda)=0}\frac{(-1)^{\ell(\lambda)}}{z_{\lambda}}p_{\lambda}.
\end{align*}
So
\begin{align*}
\Gamma(J_{k}\pp) & =\Gamma(J_{0}\pp)\star\Gamma(I_{k}\pp)\\
 & =\left(\sum_{\ell_{o}(\lambda)=0}\frac{1}{z_{\lambda}}p_{\lambda}\right)\star\left(\sum_{\ell_{o}(\lambda)=k,\;\ell_{e}(\lambda)=0}\frac{1}{z_{\lambda}}p_{\lambda}\right)\\
 & =\sum_{\ell_{o}(\lambda)=k}\frac{1}{z_{\lambda}}p_{\lambda},
\end{align*}
and 
\begin{align*}
\frac{1}{2}\Gamma(J_{k}\pp+J_{k}\mm) & =\frac{1}{2}\Gamma(J_{0}\pp+J_{0}\mm)\star\Gamma(I_{k}\mm)\\
 & =\left(\sum_{\ell_{o}(\lambda)=0,\;\ell_{e}(\lambda)\text{ even }}\frac{1}{z_{\lambda}}p_{\lambda}\right)\star\left(\sum_{\ell_{o}(\lambda)=k,\;\ell_{e}(\lambda)=0}\frac{1}{z_{\lambda}}p_{\lambda}\right)\\
 & =\sum_{\ell_{o}(\lambda)=k,\;\ell_{e}(\lambda)\text{ even }}\frac{1}{z_{\lambda}}p_{\lambda},
\end{align*}
and 
\begin{align*}
\frac{1}{2}\Gamma(J_{k}\pp-J_{k}\mm) & =\frac{1}{2}\Gamma(J_{0}\pp-J_{0}\mm)\star\Gamma(I_{k}\mm)\\
 & =\left(\sum_{\ell_{o}(\lambda)=0,\;\ell_{e}(\lambda)\text{ odd }}\frac{1}{z_{\lambda}}p_{\lambda}\right)\star\left(\sum_{\ell_{o}(\lambda)=k,\;\ell_{e}(\lambda)=0}\frac{1}{z_{\lambda}}p_{\lambda}\right)\\
 & =\sum_{\ell_{o}(\lambda)=k,\;\ell_{e}(\lambda)\text{ odd }}\frac{1}{z_{\lambda}}p_{\lambda}.
\end{align*}

\end{proof}

As remarked at the end of Section~\ref{sym}, the identification of the commutative
images of the idempotents leads to analogues of the characters of
Foulkes~\cite{foulkes}, that only depend on certain features of the
cycle type:

\begin{coro} \label{foulkescharacters}
Write $\chi_{I}$ for the symmetric group character
corresponding to the ribbon skew shape for $I$. Let $\run(I),\pk(I)$
etc. denote $\run(\sigma),\pk(\sigma)$ for any permutation $\sigma$
with $\Des(\sigma)=\Des(I)$. Then
\begin{enumerate}[label=\roman*)]
\item The set of characters
\[
\left\{ \sum_{\run(I)=k}\chi_{I}:1\leq k<n\right\} 
\]
is a basis for functions $\mathfrak{S}_{n}\rightarrow\mathbb{R}$
depending only on the number of odd cycles and whether there are even
cycles, and are zero on permutations with an odd number of even cycles.
\item The set of characters
\[
\left\{ \sum_{\run^{\circ}(I)=k}\chi_{I}:1\leq k\leq n\right\} 
\]
is a basis for functions $\mathfrak{S}_{n}\rightarrow\mathbb{R}$
depending only on the number of odd cycles and whether there are even
cycles.
\item The set of characters
\begin{multline*}
\left\{ \sum_{\run(I)=2l}\chi_{I}:1\leq2l<n\right\} \amalg\left\{ \sum_{\substack{\run(I)=2k+1\\
i_{1}=1
}
}\chi_{I}:1\leq2k+1<n\right\} \\ \amalg\left\{ \sum_{\substack{\run(I)=2k+1\\
i_{1}>1
}
}\chi_{I}:1\leq2k+1<n\right\} 
\end{multline*}
is a basis for functions $\mathfrak{S}_{n}\rightarrow\mathbb{R}$
depending only on the number of odd cycles and whether the number
of even cycles is zero, even positive, or odd.
\item The set of characters
\[
\left\{ \sum_{\pk(I)=k}\chi_{I}:1\leq2k\leq n+1\right\} 
\]
is a basis for functions $\mathfrak{S}_{n}\rightarrow\mathbb{R}$
depending only on the number of odd cycles and are zero on permutations
with an odd number of even cycles.
\item The set of characters
\[
\left\{ \sum_{\pk^{\circ}(I)=k}\chi_{I}:1\leq2k+1\leq n+1\right\} 
\]
is a basis for functions $\mathfrak{S}_{n}\rightarrow\mathbb{R}$
depending only on the number of odd cycles.
\end{enumerate}
 \end{coro}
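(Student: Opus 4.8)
The plan is to run, for each of the five subalgebras, the same ``two bases of one space'' argument that recovers Foulkes' theorem at the end of Subsection~\ref{sym}. The common tool is Solomon's homomorphism $\Gamma\colon\mathcal{D}_n\to Sym_n$, under which $R_I\mapsto r_I$, so that the ribbon character $\chi_I$ corresponds to $r_I$, while the rescaled power sums $\frac{1}{z_\lambda}p_\lambda$ correspond to the (normalized) indicator functions of conjugacy classes. Thus a sum $\sum_{I}\chi_I$ is the character corresponding to $\Gamma\left(\sum_{I}R_I\right)$, and a sum $\sum_{\lambda\in\Lambda}\frac{1}{z_\lambda}p_\lambda$ corresponds, under this dictionary, to the indicator function of the set of permutations whose cycle type lies in $\Lambda$.

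First I would record the two spanning sets of each $\Gamma(\mathcal{A})$, for $\mathcal{A}$ ranging over $\mathcal{W}_n,\mathcal{W}_n^\circ,\mathcal{C}_n,\mathcal{P}_n,\mathcal{P}_n^\circ$. On the one hand, Theorem~\ref{theoidemp} exhibits a basis of $\Gamma(\mathcal{A})$ consisting of the idempotent images $\sum_{\lambda\in\Lambda_k}\frac{1}{z_\lambda}p_\lambda$ over the listed disjoint families $\Lambda_k$ of partitions of $n$. Since the $\frac{1}{z_\lambda}p_\lambda$ form a basis of $Sym_n$, this identifies $\Gamma(\mathcal{A})$ as exactly the space of class functions that are constant on each block $\Lambda_k$ and vanish off $\bigcup_k\Lambda_k$; reading the defining conditions of the $\Lambda_k$ (which control $\ell_o(\lambda)$ and the status of $\ell_e(\lambda)$), together with the fact that the sign of a permutation equals $(-1)^{\ell_e(\lambda)}$ for cycle type $\lambda$, yields precisely the cycle-type descriptions in the statement. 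On the other hand, each $\mathcal{A}$ is spanned by the relevant run or peak generators --- $W_{k,n}$ for $\mathcal{W}_n$, $W_{k,n}^\circ$ for $\mathcal{W}_n^\circ$, $P_{k,n}$ for $\mathcal{P}_n$, $P_{k,n}^\circ$ for $\mathcal{P}_n^\circ$, and $W_{2l,n},W_{2k+1,n}\pp,W_{2k+1,n}\mm$ for $\mathcal{C}_n$ --- each of which is a sum $\sum_I R_I$ over compositions with a fixed value of $\run$, $\run^\circ$, $\pk$, or $\pk^\circ$ (refined in the $\mathcal{C}_n$ case by whether the first run ascends or descends). Applying $\Gamma$ turns these generators into exactly the character sums listed in the corollary.

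Finally, I would invoke the fact, already established inside the proof of Theorem~\ref{theoidemp}, that $\Gamma$ restricts to an algebra isomorphism on each of these subalgebras (the number of idempotent images equals $\dim\mathcal{A}$, so $\Gamma|_{\mathcal{A}}$ is injective, hence bijective onto $\Gamma(\mathcal{A})$). Consequently the character sums from the previous step are linearly independent and span the same space as the power-sum idempotent basis, so they form a basis of the class-function space described combinatorially there, which is the assertion of the corollary. The one genuinely delicate point is the bookkeeping in the middle step: I must verify that the generators of each $\mathcal{A}$ really equal $\sum_I R_I$ over the advertised composition statistic --- in particular, for $\mathcal{C}_n$, that ``first run descending'' matches $i_1=1$ and ``first run ascending'' matches $i_1>1$, via $1\in\Des(I)\iff i_1=1$ --- and that the cycle-type conditions cut out by the families $\Lambda_k$ of Theorem~\ref{theoidemp} are transcribed correctly into the verbal descriptions (the number of odd cycles, and whether the number of even cycles is zero, positive even, or odd). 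The conceptual content is identical across all five parts; only these index-set translations change.
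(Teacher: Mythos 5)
Your proposal is correct and follows essentially the same route as the paper: the paper's proof is precisely the general principle that if a commutative subalgebra is spanned by sums $\sum_{I\in\mathfrak{I}_k}R_I$ over disjoint composition families and its $\Gamma$-image has an orthogonal idempotent basis $\sum_{\lambda\in\Lambda_k}\frac{1}{z_\lambda}p_\lambda$, then the $\sum_{I\in\mathfrak{I}_k}\chi_I$ form a basis for class functions constant on each $\Lambda_k$ and vanishing off their union. Your write-up just spells out the details (the isomorphism of $\Gamma$ restricted to each subalgebra via the dimension count from Theorem~\ref{theoidemp}, and the translation of the $\Lambda_k$ into the verbal cycle-type descriptions) that the paper leaves implicit.
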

\begin{proof}
If $\left\{ \sum_{I\in\mathfrak{I}_{k}}R_{I}\right\} $ span a commutative
subalgebra of the descent algebra (for some disjoint sets $\mathfrak{I}_{k}$
of compositions), and the symmetric function image of this subalgebra
has a basis of orthogonal idempotents of the form $\left\{ \sum_{\lambda\in\Lambda_{k}}\frac{p_{\lambda}}{z_{\lambda}}\right\} $,
then $\left\{ \sum_{I\in\mathfrak{I}_{k}}\chi_{I}\right\} $ is a
basis for functions $\mathfrak{S}_{n}\rightarrow\mathbb{R}$ that
are constant on permutations with cycle type within each $\Lambda_{k}$,
and are zero on permutations with cycle type not in any $\Lambda_{k}$.
\end{proof}

\subsection{\texorpdfstring{Orthogonal idempotents for the noncommutative run algebra $\mathcal{W}_{n}^{\pm}$}
{Orthogonal idempotents for the noncommutative run algebra W±}}

We now show that the idempotents of $\mathcal{C}_{n}$ constructed
in Theorem~\ref{theoidemp} are in fact a complete set of primitive orthogonal
idempotents for $\mathcal{W}_{n}^{\pm}$.
\begin{theo}
A complete set of primitive orthogonal idempotents for $\mathcal{W}_{n}^{\pm}$ is given by
\[
\frac{1}{2}(I_{k,n}\pp+I_{k,n}\mm),\quad1\leq k\leq n\text{ and }k\equiv n\mod2
\]
and
\[
\frac{1}{2}(J_{l,n}\pp+J_{l,n}\mm-I_{l,n}\pp-I_{l,n}\mm),\quad0\leq l\leq n-4\text{ and }l\equiv n\mod2
\]
and
\[
\frac{1}{2}(J_{m,n}\pp-J_{m,n}\mm),\quad0\leq m\leq n-2\text{ and }m\equiv n\mod2.
\]
\end{theo}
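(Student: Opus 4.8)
The plan is to identify the Jacobson radical of $\mathcal{W}_n^{\pm}$ explicitly and reduce the claim to the already-established structure of $\mathcal{C}_n$. The engine is the order-two algebra automorphism $\theta(x) := \omega x \omega$, where $\omega = W_{1,n}\mm$ satisfies $\omega^2 = 1$; recall from the proof that $\mathcal{C}_n$ is a subalgebra that its fixed space is exactly $\mathcal{C}_n = \{x : \theta(x) = x\}$. First I would set $\mathcal{A}_n := \{x : \theta(x) = -x\}$ and note that $\theta$ being an automorphism makes $\mathcal{W}_n^{\pm} = \mathcal{C}_n \oplus \mathcal{A}_n$ a $\mathbb{Z}/2$-grading, so that $\mathcal{C}_n \mathcal{A}_n, \mathcal{A}_n \mathcal{C}_n \subseteq \mathcal{A}_n$ and $\mathcal{A}_n \mathcal{A}_n \subseteq \mathcal{C}_n$. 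Since Proposition~\ref{propmulV} gives $\theta(V_{2k,n}\pp) = V_{2k,n}\mm$, the anti-invariant part $\mathcal{A}_n$ has basis $\{V_{2k,n}\pp - V_{2k,n}\mm : 1 \le 2k \le n-1\}$.

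The decisive step is to show $\mathcal{A}_n^2 = 0$. Here the $V$-basis pays off: Proposition~\ref{propmulV} shows that when the first index is even, $V_{2a,n}^{\epsilon} V_{2b,n}^{\delta} = V_{(2a)(2b),n}^{\epsilon}$ with no dependence on $\delta$, so $(V_{2a,n}\pp - V_{2a,n}\mm)(V_{2b,n}\pp - V_{2b,n}\mm)$ collapses to $0$. Granting this, $\mathcal{A}_n$ is a two-sided ideal with $\mathcal{A}_n^2 = 0$, hence a nilpotent ideal contained in $\operatorname{rad}(\mathcal{W}_n^{\pm})$. Conversely, since $\mathcal{A}_n^2 = 0$ the projection $\pi : \mathcal{W}_n^{\pm} \to \mathcal{C}_n$ along $\mathcal{A}_n$ is an algebra map with kernel $\mathcal{A}_n$, so $\mathcal{W}_n^{\pm}/\mathcal{A}_n \cong \mathcal{C}_n$; as $\mathcal{C}_n$ possesses a basis of orthogonal idempotents equal in number to $\dim \mathcal{C}_n$ (Theorem~\ref{theoidemp}), it is isomorphic to a product of copies of $\mathbb{Q}$ and in particular semisimple, forcing $\operatorname{rad}(\mathcal{W}_n^{\pm}) \subseteq \mathcal{A}_n$. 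Thus $\operatorname{rad}(\mathcal{W}_n^{\pm}) = \mathcal{A}_n$ and $\mathcal{W}_n^{\pm}/\operatorname{rad}(\mathcal{W}_n^{\pm}) \cong \mathcal{C}_n$, with the quotient map restricting to the identity on $\mathcal{C}_n$.

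To finish, I would observe that the listed elements are exactly the idempotents of $\mathcal{C}_n$ in Theorem~\ref{theoidemp}(iii); as a basis of orthogonal idempotents of $\mathcal{C}_n \cong \mathbb{Q}^{\dim \mathcal{C}_n}$ they are the primitive idempotents of $\mathcal{C}_n$, are pairwise orthogonal, and sum to the unit $W_{1,n}\pp$ of $\mathcal{C}_n$, which is the unit of $\mathcal{W}_n^{\pm}$. Under $\mathcal{W}_n^{\pm}/\operatorname{rad} \cong \mathcal{C}_n$ each maps to a primitive idempotent of this split semisimple quotient, and since primitivity of an idempotent $e$ is detected modulo the radical (the corner $e\mathcal{W}_n^{\pm}e$ is local precisely when its image in the semisimple quotient is a division ring), each listed element is primitive in $\mathcal{W}_n^{\pm}$. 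Orthogonality, primitivity, and summation to the unit together give the desired complete set.

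I expect the main obstacle to be isolating the radical, and everything funnels through the vanishing $\mathcal{A}_n^2 = 0$: it is exactly this identity that simultaneously makes $\mathcal{A}_n$ a nilpotent ideal and makes the projection onto $\mathcal{C}_n$ multiplicative. Attempting the theorem directly --- by analysing each corner algebra $e\mathcal{W}_n^{\pm}e$ to check primitivity --- looks far messier; the payoff of the $\theta$-grading together with the $V$-basis of Proposition~\ref{propmulV} is that this potential difficulty evaporates into a one-line cancellation.
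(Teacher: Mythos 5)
Your argument is correct, and it reaches the conclusion by a genuinely different route from the paper. The paper identifies the Jacobson radical of $\mathcal{W}_n^{\pm}$ as $\mathcal{K}_n=\ker\Gamma\cap\mathcal{W}_n^{\pm}$ by invoking Schocker's general theorem that the radical of \emph{any} subalgebra of $\Sym_n$ is its intersection with $\ker\Gamma$, and then shows $\mathcal{C}_n$ is a complement by a dimension count (injectivity of $\Gamma$ on $\mathcal{C}_n$ plus the explicit elements $W_{2k,n}\pp-W_{2k,n}\mm\in\ker\Gamma$). You instead construct the radical by hand: since $\theta(x)=\omega x\omega$ is an involutive automorphism fixing exactly $\mathcal{C}_n$, and since the basis $\{V_{k,n}\pp,V_{k,n}\mm\}$ is permuted by $\theta$ (odd indices fixed, even indices swapped), the $(-1)$-eigenspace $\mathcal{A}_n$ is spanned by the $V_{2k,n}\pp-V_{2k,n}\mm$, and the even-first-index case of Proposition~\ref{propmulV} makes $\mathcal{A}_n^2=0$ by a one-line cancellation; semisimplicity of the quotient $\mathcal{C}_n\cong\mathbb{Q}^{\dim\mathcal{C}_n}$ then pins down $\operatorname{rad}(\mathcal{W}_n^{\pm})=\mathcal{A}_n$. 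The two descriptions of the radical agree as subspaces (both are the $(-1)$-eigenspace of $\theta$), and both proofs share the same endgame of lifting primitive idempotents from the semisimple quotient. What your version buys is self-containedness --- no appeal to \cite{schocker2} --- together with the sharper structural facts that the radical squares to zero and has the explicit basis $\{V_{2k,n}\pp-V_{2k,n}\mm\}$; what the paper's version buys is brevity given the citation, and an identification of the radical as $\ker\Gamma\cap\mathcal{W}_n^{\pm}$ that sits naturally alongside the character-theoretic computations in the rest of the section. The only places to tighten your write-up are small: state explicitly that $\{V_{k,n}\pp,V_{k,n}\mm:1\le k<n\}$ is a basis (it is a spanning set of cardinality $2n-2=\dim\mathcal{W}_n^{\pm}$, by Theorem~\ref{th:subalgs}), which is what justifies reading off the eigenspaces of $\theta$ from its action on these elements, and note that Proposition~\ref{propmulV} is an identity in $\widehat{\Sym}$ that holds degree by degree because the internal product is degree-preserving.
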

\begin{proof}
The key is to show that $\mathcal{C}_{n}$ is a complement to $\mathcal{K}_{n}:=\ker\Gamma\cap\mathcal{W}_{n}^{\pm}$.
As noted in the previous section, $\Gamma$ restricted to $\mathcal{C}_{n}$
is an isomorphism, so $\mathcal{C}_{n}$ and $\mathcal{K}_{n}$ have
trivial intersection. Since $\dim\mathcal{C}_{n}=n-1+\lfloor\frac{n}{2}\rfloor$,
it must be that $\dim\mathcal{K}_{n}\leq\lceil\frac{n}{2}\rceil-1$.
Now $W_{2k,n}\pp=\omega W_{2k,n}\mm\omega$ for $1\leq k\leq\lceil\frac{n}{2}\rceil-1$,
so $W_{2k,n}\pp-W_{2k,n}\mm\in\mathcal{K}_{n}$, and these elements
are linearly independent, so $\dim\mathcal{K}_{n}=\lceil\frac{n}{2}\rceil-1$,
and $\mathcal{W}_{n}^{\pm}=\mathcal{C}_{n}\oplus\mathcal{K}_{n}$.

By \cite[Cor.~2.2]{schocker2}, the Jacobson radical of any subalgebra
$\mathcal{A}$ of $\Sym_{n}$ (under the internal product) is equal
to $\ker\Gamma\cap\mathcal{A}$. Hence $\mathcal{K}_{n}$ is the Jacobson
radical of $\mathcal{W}_{n}^{\pm}$, so any complete set of primitive
orthogonal idempotents for the complement $\mathcal{C}_{n}$ is a
complete set of primitive orthogonal idempotents for $\mathcal{W}_{n}^{\pm}$.
\end{proof}

\end{document}